\title{The radiation field on product cones}
\author[D. Baskin]{Dean Baskin}
\email{dbaskin@math.tamu.edu}
\address{Department of Mathematics, Texas A\&M University \\ Mailstop
  3368 \\ College Station, TX 77843}
\author[J.L. Marzuola]{Jeremy L. Marzuola}
\email{marzuola@math.unc.edu}
\address{Department of Mathematics, UNC-Chapel Hill \\ CB\#3250
  Phillips Hall \\ Chapel Hill, NC 27599}
\newcommand{\bl}{\mathrm{b}}
\newcommand{\e}{\mathrm{e}}
\newcommand{\us}{\widetilde{u}_\sigma}
\newcommand{\fs}{\widetilde{f}_\sigma}
\newcommand{\ut}{\widetilde{u}}
\newcommand{\rhot}{\tilde{\rho}}
\newcommand{\cE}{\mathcal{E}}
\newcommand{\cQ}{\mathcal{Q}}
\newcommand{\tG}{\widetilde{G}}
\newcommand{\Sbstar}{{}^{\bl}S^{*}}
\newcommand{\Sbdotstar}{{}^{\bl}\dot{S}^*}
\newcommand{\Sestar}{{}^\e S^*}
\newcommand{\Tb}{{}^\bl T}
\newcommand{\Tbstar}{{}^{\bl}T^*}
\newcommand{\Tbdot}{{}^{\bl}\dot{T}}
\newcommand{\Tbdotstar}{\Tbdot^*}
\newcommand{\Tbstarbar}{\overline{\Tbstar}}
\newcommand{\Te}{{}^{\e}T}
\newcommand{\Testar}{\Te^*}
\newcommand{\xib}{\underline{\xi}}
\newcommand{\zetab}{\underline{\zeta}}
\newcommand{\taub}{\underline{\tau}}
\newcommand{\taue}{\tau_\e}
\newcommand{\xie}{\xi_\e}
\newcommand{\zetae}{\zeta_\e}
\newcommand{\norm}[1]{{\left\lVert{#1}\right\rVert}}
\newcommand{\lap}{\Delta}
\newcommand{\grad}{\nabla}
\newcommand{\Ps}{\widetilde{P}_\sigma}
\newcommand{\Ph}{\widetilde{P}_h}
\DeclareMathOperator{\Id}{Id}
\DeclareMathOperator{\Op}{Op}
\newcommand{\Opbh}{\Op_{\bl,h}}
\newcommand{\CI}{C^\infty}
\newcommand{\dom}{\mathcal{D}}
\newcommand{\domh}{\dom_h}
\newcommand{\domst}{\widetilde{\dom}}
\newcommand{\Hb}{H_\bl}
\newcommand{\Hbd}{H_{\bl, \dom}}
\newcommand{\Hbdst}{H_{\bl, \domst}}
\newcommand{\cX}{\mathcal{X}}
\newcommand{\cY}{\mathcal{Y}}
\newcommand{\tow}{\mathrm{ftr}}
\newcommand{\away}{\mathrm{past}}
\newcommand{\Psib}{\Psi_{\bl}}
\newcommand{\Psibinf}{\Psi_{\bl,\infty}}
\newcommand{\Psibh}{\Psi_{\bl,h}}
\newcommand{\Psibhc}{\Psibh^{\mathrm{comp}}}
\DeclareMathOperator{\WF}{WF}
\newcommand{\WFb}{\WF_\bl}
\newcommand{\WFbh}{\WF_{\bl,h}}
\newcommand{\WFbdst}{\WF_{\bl,\domst}}
\newcommand{\WFbdprime}{\WF_{\bl,\domst'}}
\newcommand{\ellb}{\operatorname{ell}_\bl}
\newcommand{\ellbh}{\operatorname{ell}_{\bl,h}}
\newcommand{\sigmab}{\sigma_\bl}
\newcommand{\sigmabh}{\sigma_{\bl,h}}
\newcommand{\Sigmadot}{\dot{\Sigma}}
\newcommand{\Sigmah}{\Sigma_h}
\DeclareMathOperator{\Diff}{Diff}
\newcommand{\Diffb}{\Diff_\bl}
\newcommand{\Diffbh}{\Diff_{\bl,h}}
\newcommand{\Diffe}{\Diff_{\mathrm{e}}}
\newcommand{\module}{\mathcal{M}}
\newcommand{\pphg}{\mathcal{A}_{\mathrm{pphg}}}
\newcommand{\complexes}{\mathbb{C}}
\newcommand{\integers}{\mathbb{Z}}
\newcommand{\naturals}{\mathbb{N}}
\newcommand{\reals}{\mathbb{R}}
\newcommand{\sphere}{\mathbb{S}}
\renewcommand{\Im}{\operatorname{Im}}
\renewcommand{\Re}{\operatorname{Re}}
\DeclareMathOperator{\mf}{mf}
\DeclareMathOperator{\cf}{cf}
\newcommand{\scri}{\mathcal{I}}
\newcommand{\ds}{\displaystyle}
\newcommand{\abs}[1]{\lvert#1\rvert}
\newcommand{\ang}[1]{\left\langle #1 \right\rangle}
\newcommand{\pd}[1][]{\partial_{#1}}
\newcommand{\met}{k}
\newcommand{\bo}{\mathcal{O}}
\newtheorem{theorem}{Theorem}
\newtheorem{lemma}[theorem]{Lemma}
\newtheorem{proposition}[theorem]{Proposition}
\newtheorem{corollary}[theorem]{Corollary}
\theoremstyle{definition}
\newtheorem{definition}[theorem]{Definition}
\newtheorem{remark}[theorem]{Remark}
\numberwithin{theorem}{section}
\begin{document}


\begin{abstract}
  We consider the wave equation on a product cone and find a joint
  asymptotic expansion for solutions near null and future infinities.
  The rates of decay seen in the expansion at future infinity are the
  resonances of a hyperbolic cone and were computed by the authors in
  \cite{resonances}.  The expansion treats an asymptotic regime not
  considered in the influential work of Cheeger and Taylor~\cite{CT1,
    CT2}.  The main result follows the blueprint laid out in the
  ~\cite{BVW1, BVW2} with key new elements including propagation
  estimates near the conic singularities.  The proof of the
  propagation estimates extends prior work of
  Melrose--Vasy--Wunsch~\cite{MVW} and
  Gannot--Wunsch~\cite{GW}.
\end{abstract}

\maketitle

\section{Introduction}
\label{sec:introduction}

For a given compact connected Riemannian manifold $(Z,k)$, we say that
the cone $C(Z)$ over $Z$ is the manifold
\begin{equation*}
  (0, \infty)_{r} \times Z,
\end{equation*}
equipped with the (singular) Riemannian metric
\begin{equation*}
  dr^{2} + r^{2}k.
\end{equation*}
We consider the wave equation
\begin{align}
  \label{eq:IVP}
  \left\{ \begin{array}{l}
 \Box w = (D_{t}^{2} - \lap_{C(Z)})w = 0 \in \CI_{c}(\reals \times
           C(Z)), \\
  (w, \pd[t]w)|_{t=0}  \in \CI_{c}(C(Z)) \times \CI_{c}(C(Z)), 
  \end{array}
  \right.
\end{align}
on $\reals \times C(Z)$.  Here $\lap_{C(Z)}$ represents the Friedrichs extension of the
Laplacian on $C(Z)$.

In order to simplify the statement of our main result, we introduce the
(forward) \emph{Friedlander radiation field}, which is given in terms of $s =
t-r$, $r$, and $z$ by
\begin{equation*}
  \mathcal{R}_{+}[w](s,z) = \lim_{r\to \infty} r^{(n-1)/2}w(s+r, r, z).
\end{equation*}
The function $\mathcal{R}_{+}[w]$ measures the radiation pattern seen
by a distant observer and is an explicit realization of the
Lax--Phillips translation representation as well as a generalization
of the Radon transform.  Our main theorem can then be stated in terms
of the radiation field as $s$, the ``lapse'' parameter, tends toward
infinity (a more detailed theorem is stated later as
Theorem~\ref{thm:main-thm-v2}):

\begin{theorem}
  \label{thm:main-thm}
  Suppose $w$ is a solution of the wave equation on a cone with smooth
  initial data compactly supported away from the conic singularity,
  i.e., that $w$ is a solution of equation~\eqref{eq:IVP}.
  The radiation field $\mathcal{R}_{+}[w](s,z)$ of $w$ admits an
  asymptotic expansion of the form
  \begin{equation*}
    \mathcal{R}_{+}[w](s,z) \sim \sum_{j}
    a_{j,\kappa}(z) s^{-i\sigma_{j}} 
  \end{equation*}
  as $s \to + \infty$.
\end{theorem}

In short, we find a complete asymptotic expansion for the radiation
field of a solution.  The exponents in the expansion are the
resonances of the spectral family of the Laplacian on a related
``hyperbolic cone'' and were computed in a previous
paper~\cite{resonances}.  In fact, the $\sigma_{j}$ can be computed
explicitly in terms of the eigenvalues $\mu_{j}^{2}$ of $\lap_{k}$.
Because each eigenvalue $\mu_{j}^{2}$ leads to an entire family of
resonances, it is easier to rename them $\sigma_{j,k}$ in terms of two
parameters, which we call $j$ and $k$.  Here $j$ refers to the
eigenvalue in question and $k \in \naturals = \{ 0, 1, \dots \}$.
\begin{equation}
  \label{eq:resonance-defn}
  \sigma_{j,k}  = - i \left( \frac{1}{2} + k + \sqrt{\left(
        \frac{n-2}{2}\right)^{2} + \mu_{j}^{2}}\right)
\end{equation}
provided that
\begin{equation*}
  \sqrt{\left( \frac{n-2}{2}\right)^{2} + \mu_{j}^{2}} \notin
  \frac{1}{2} + \integers.  
\end{equation*}
The resonance $\sigma_{j}$ has the same multiplicity as the eigenvalue
$\mu_{j}^{2}$ of $\lap_{k}$.  

In fact, we prove a stronger theorem showing that $w$ in fact enjoys a
joint asymptotic expansion in $r$ and $s$ of the form
  \begin{equation*}
    w \sim r^{-\frac{n-1}{2}}\sum_{j}
    \sum_{\ell = 0}^{\infty} a_{j\kappa\lambda}(z) s^{-i\sigma_{j}}
    (s/r)^{\ell}.
  \end{equation*}
A precise statement of the theorem is given at the beginning of
Section~\ref{sec:proof-theor-refth} below.  The hyperbolic cone above
is naturally realized as a boundary face of the spacetime compactification
introduced below in Section~\ref{sec:conic-geometry}; the joint
asymptotics describe the behavior near the intersection of two faces.

We note further that the hypotheses of Theorem~\ref{thm:main-thm} may
be relaxed somewhat; it is not strictly necessary that we consider the
static wave equation on a product cone; we stick to this setting for
pedagogical reasons but describe straightforward generalizations below
(see Section~\ref{sec:conic-geometry}).  Although the argument
simplifies in the product setting, the complications arising in the
general setting can be treated using more refined microlocal
techniques, though logarithmic terms might appear in the expansion.
See for instance the previous papers~\cite{BVW1, BVW2} for relaxing
the static hypothesis and Melrose--Vasy--Wunsch~\cite{MVW} to relax
the product hypothesis.  Recent work of
Yang~\cite{yang2020propagation} further connects this paper with the
work of Cheeger--Taylor by linking the scattering matrix (whose
structure can be obtained from our result) with the principal symbol
of the diffracted wave.

The results in Theorem~\ref{thm:main-thm} extend the foundational work
initiated by Cheeger--Taylor in \cite{CT1, CT2}, though our aim is
different.  Cheeger and Taylor were more interested in the propagation
of wavefront set for the wave equation on product cones; in particular
their main aim was to show the existence (and calculate the symbol) of
the diffracted wave arising from the metric singularity.  In the
process, they also found the asymptotic behavior of solutions of the
wave equation away from $\scri^{+}$; we recover their result in this
region.  Although in principle Theorem~\ref{thm:main-thm} can be
recovered using the methods of Cheeger--Taylor~\cite{CT1,CT2} provided
one could extend their asymptotic expansion uniformly to the boundary
of the light cone, we provide an alternative microlocal proof.

The novelty of this paper involves several advances on existing
technology for the study of waves in a diffractive setting.  Not only
do we essentially finish the project of Cheeger--Taylor in a fashion
that gives a complete asymptotic description, we find that cones
provide an additional class of examples where the expansion of the
radiation field can be computed explicitly using special functions
methods as in our previous work~\cite{resonances}.  In particular, the
exponents in the expansion are resonance poles of the Laplacian on the
hyperbolic cone and the coefficients in the expansion arise as
boundary values of the resonant states; the structure of the resonant
states gives insight into the structure of the scattering matrix.  A
similar observation underlies the work of Yang mentioned above~\cite{yang2020propagation}.

From a more technical perspective, we have extended a number of
microlocal tools to our setting.  In the bulk spacetime, we adapt and
extend the propagation results of Melrose--Wunsch~\cite{MW} and
Vasy~\cite{Vasy08} to our compactification.  This extension requires
putting the differential--pseudodifferential interactions at the core
of those papers on a more global footing.  

The technical heart of the paper, however, lies in our treatment of
the normal operator on the boundary in Section~\ref{sec:prop-sing}.
We extend the differential--pseudodifferential interactions to a class
of variable order Sobolev spaces on which the boundary operator is
Fredholm.  We also establish \emph{semiclassical} propagation
estimates on these spaces; to our knowledge analogous results have not
yet appeared in the literature.  The work~\cite{GW} of Gannot--Wunsch
establishes similar semiclassical estimates for conormal potentials,
which in this case can be viewed as a one-dimensional cone.

Finally, an additional technical novelty encountered is that solutions
of the wave equation are not polyhomogeneous on the final compactified
spacetime.  Indeed, they are conormal to all boundary hypersurfaces
but only polyhomogeneous at a subset of them.  To this end, we
formalize the notion of \emph{partial polyhomogeneity} in
Definition~\ref{defn:partialy-phg}.

In addition to advances in analysis, the results we obtain here have
several direct applications to important physical models.  Diffractive
systems arise naturally in physical settings where singular potentials
appear, such as in the cases of inverse square potential or the
Dirac-Coulomb system.  This framework has been adapted to study the
long time asymptotics directly for the massless Dirac-Coulomb system in 
recent work~\cite{baskin2021asymptotics}.  In addition, further
advances building upon this work have appeared in studying the
resolvent and/or scattering matrix for Laplacian on a manifold with
conic singularities in the works
\cite{hintz2020resolvents,hintz2021semiclassical,yang2020propagation},
as well as in development of propagation of singularities for conic
operators.  The partial polyhomogeneity of solutions also implies a
novel version of the so-called Price's law explored recently using
similar tools in the non-diffractive setting by
Hintz~\cite{hintz2021sharp}.  In more singular settings, the observed
decay rates change in an interesting fashion directly related to the diffractive component of the problem.  This
particular application will be explored further in a forthcoming work.

\subsection{A sketch of the proof of Theorem~\ref{thm:main-thm}}
\label{sec:sketch-proof-theorem}

To prove the main theorem, we show that solutions to
equation~\eqref{eq:IVP} are partially polyhomogeneous on a
compactification of the spacetime $\reals \times C(Z)$ and then
identify the exponents seen in the expansions.  As this proof is
somewhat involved, we provide a sketch here.

We compactify the spacetime $\reals \times C(Z)$ to a manifold with
corners we call $M$, which has two boundary hypersurfaces: one,
denoted $\mf$, corresponds to the ``boundary at infinity'', while the
other, denoted $\cf$, corresponds to the world line of the conic
singularity.  We refer the reader to Figure~\ref{fig:1-d-compact} in
the next section for a fuller picture of the geometry.

An instructive example is the case of a ``phantom cone''.  One can
view $\reals^{n}$ as a conic manifold by equipping it with polar
coordinates; in this case the link is $Z = \sphere^{n-1}$.  The
compactification to $M$ in this case can be blown down along $\cf$ to
yield the compactification of the Minkowski spacetime considered in
previous work~\cite{BVW1,BVW2}.

The proof of the main theorem roughly follows the blueprint laid out in
previous work of the first author~\cite{BVW1, BVW2}, which in turn
builds on the influential work of Vasy~\cite{vasy:microlocal}.
In particular, our aim is to reduce the problem of finding an asymptotic
expansion to the inversion of a family of Fredholm operators on $\mf$;
the residues of the poles of this family generate the terms in the
expansion.  Showing that the family is Fredholm (and that the argument
can begin) reduces to a sequence of propagation of singularities arguments.  

The forward radiation field encodes the behavior of the solution $u$
near the intersection of the future light cone with infinity (i.e.,
the face $\mf$); we denote this intersection $S_{+}$.  To find the
asymptotics of the radiation field, we therefore ultimately \emph{blow
  up} $S_{+}$ in $\mf$ to obtain a third boundary hypersurface
corresponding to ``future null infinity''.  Locally near the interior
of this new front face (denoted $\scri^{+}$), the blow-up amounts to
introducing new coordinates $\rho = (1 + t^{2} + r^{2})^{-1/2}$,
$s = t- r$, and $z$; the front face is given by $\rho = 0$.

We begin with the solution of equation~\eqref{eq:IVP}; by smoothly
cutting off the solution for $t< 0$, we consider instead the forward
solution of $\Box w = f$, where $f \in \CI_{c}(M^{\circ})$ vanishes
identically for $t< 0$.  We consider then
the function $u = \rho^{-(n-1)/2}w$ and set
\begin{equation*}
  L = \rho^{-2}\rho^{-(n-1)/2}\Box \rho^{(n-1)/2},
\end{equation*}
so that $u$ satisfies $Lu = f'$ for some other function
$f'\in \CI_{c}(M^{\circ})$ vanishing for $t<0$.  Note
that the asymptotic properties of $w$ and $u$ are linked by a simple
relationship.  A propagation of singularities argument (proved in
Section~\ref{sec:prop-sing}) shows that $u$ is conormal to $S_{\pm}$.
The conormality of the solution at the conic singularity $\cf$ is one
of the consequences of the work of
Melrose--Wunsch~\cite[Proposition~11.1]{MW}; we extend
that result to the corners $\mf \cap \cf$.

We then set $P_{\sigma} = \hat{N}(L)$ where $\hat{N}$ is the reduced
normal operator, i.e., the family of operators on $\mf$ obtained by
the Mellin transform in the normal variable $\rho$.  We set $\us$ and
$\fs$ to be the Mellin transforms of $u$ and $f'$, so that $\us$
solves
\begin{equation*}
  P_{\sigma} \us = \fs .
\end{equation*}
In general, one would expect additional correction terms, but the
dilation invariance of the model problem simplifies the argument
considerably and accounts for the absence of logarithmic terms in the
expansion in Theorem~\ref{thm:main-thm}.  We show that we can propagate regularity from the past
``radial points'' of $P_{\sigma}$ to the future ones.  Away from the
conic singularity, this argument is contained in the previous
papers~\cite{BVW1, BVW2, vasy:microlocal}; the main missing piece is
the propagation near the conic singularity
(proved in Section~\ref{sec:prop-sing}).  This argument shows that
$P_{\sigma}$ is Fredholm on variable-order Sobolev-type spaces and
$P_{\sigma}^{-1}$ has finitely many poles in any horizontal strip.  In
fact, the poles of $P_{\sigma}^{-1}$ can be identified with the
resonances of the corresponding hyperbolic cone.  

Once these pieces are in place, we can adapt the argument from the
prequel~\cite{BVW2} to prove the main theorem.  As parts of it are
somewhat more complicated in the present context, we provide a sketch
of that argument below (Section~\ref{sec:proof-theor-refth}).

Section~\ref{sec:conic-geometry} provides an introduction to the
specific geometry we consider, and
Section~\ref{sec:basics-bl-geometry} provides a brief review of the
geometry of manifolds with corners and asymptotic expansions on them.
We discuss the model operators we consider in Section
\ref{sec:operators}, then in Section~\ref{sec:pseud-oper} we present
the pseudodifferential calculi employed.
Section~\ref{sec:funct-spac-wavefr} develops the function spaces in
which the various arguments take place.
Sections~\ref{sec:prop-sing-bulk} and~\ref{sec:prop-sing} then
establish the key propagation of singularities results we need to
prove the main theorem in Section~\ref{sec:proof-theor-refth}.

\subsection{Notation}
As the proof of this paper relies on quite a few interacting differential
and pseudodifferential objects, we provide here a short list of
notational conventions employed.

We adopt the convention that $D = \frac{1}{i}\pd$.
The differential operator $\lap_{h}$ is the nonnegative Laplacian for
a Riemannian metric $h$; in a coordinate
system it is given by
\begin{equation*}
  \lap_{h} = \sum_{i,j} \frac{1}{\sqrt{h}}D_{i}\left( h^{ij}\sqrt{h}D_{j}\right).
\end{equation*}
In indexing spaces of pseudodifferential operators and Sobolev spaces,
$m$ is typically the differential order while $\ell$ represents an
order of growth or decay.

The $L^{2}$ spaces employed in this manuscript are always taken with
respect to a density induced by a metric near $x=0$; in coordinates,
these densities are given by
\begin{equation*}
  x^{n-1}\frac{d\rho}{\rho}\,dx\,\operatorname{dvol}_{\met} \text{ in
    the bulk spacetime }M, \quad \text{and} \quad
  x^{n-1}dx\,\operatorname{dvol}_{\met} \text{ on the boundary }\mf.
\end{equation*}

We aim to use the same Greek letter to denote a dual coordinate in the
cotangent bundle to a coordinate on the base; we use $\tau$ to denote
a dual to $\rho$, $\xi$ a dual to $x$, and $\zeta$ a dual to $z$.  We
use different typographical conventions to denote covectors in
different cotangent bundle constructions.  An undecorated covector
(such as $\xi$) refers to that coordinate in the standard cotangent
bundle, an underline ($\xib$) is reserved for the
$\bl$-cotangent bundle, and the subscript $\e$ is used with the
edge cotangent bundle.

\subsection*{Acknowledgments}
The authors wish to thank Semyon Dyatlov, Oran Gannot, Peter Hintz, Rafe Mazzeo, Andras Vasy and Jared Wunsch for valuable discussions. We also thank the anonymous reviewers for many helpful suggestions that led to improving the exposition of the manuscript.  DB was supported in part by
National Science Foundation (NSF) under NSF Grant DMS-1500646 and NSF CAREER Grant DMS-1654056. 
The research of JLM was supported by NSF Grant DMS-1312874 and NSF
CAREER Grant DMS-1352353. Part of this work was done while the second author was on sabbatical at Duke University and the Mittag-Leffler Institute.

\section{Conic geometry}
\label{sec:conic-geometry}

As our primary concern is the wave equation on a cone, we describe
this setting in detail.  Remark~\ref{rem:generalizations} describes
natural extensions to this setting on which versions of our main result still
hold.

Let $(Z,\met)$ be a compact, connected, $(n-1)$-dimensional
Riemannian manifold.  The metric cone $C(Z)$ over $Z$ is the manifold
\begin{equation*}
  (0,\infty)_{r}\times Z
\end{equation*}
equipped with the warped product metric
\begin{equation*}
  dr^{2} + r^{2}\met.
\end{equation*}
This metric is singular and incomplete at $r=0$; we refer to the
natural boundary $\{ 0\} \times Z$ as the \emph{cone
  point}.\footnote{We regard the conic singularity as being purely
  metric; one can think of the underlying manifold as having been
  previously resolved by blowing up a conic singularity.}

Our main result concerns solutions of the wave equation on the
spacetime $M^{\circ} = \reals_{t}\times C(Z)$, which is equipped with
the Lorentzian metric
\begin{equation*}
  g = -dt^{2} + dr^{2} + r^{2}\met.
\end{equation*}

We may regard $M^{\circ}$ as the interior of a compact manifold with
corners.  For clarity, we first describe this compactification in the
$(1+1)$-dimensional setting (i.e., when $Z$ is a single point) even
though Theorem~\ref{thm:main-thm} is trivial in this case.

\begin{figure}
  \centering
  \includegraphics{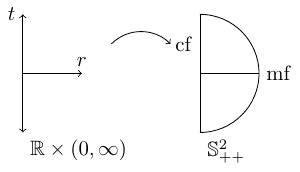}
  \caption{The compactification of $\reals \times (0,\infty)$ to
    $\sphere^2_{++}$}
  \label{fig:1-d-compact}
\end{figure}

We compactify $\reals_{t}\times (0,\infty)_{r}$ by stereographic
projection to a quarter-sphere $\sphere^{2}_{++}$ as depicted in
Figure~\ref{fig:1-d-compact}.  In other words, the map $\reals_{t}\times
(0,\infty)_{r}\to \sphere^{2}\subset \reals^{3}$ given by
\begin{equation*}
  (t,r) \mapsto \frac{(t,r,1)}{\sqrt{1 + t^{2}+r^{2}}}
\end{equation*}
sends $M^{\circ}$ to the interior of the quarter-sphere given by
\begin{equation*}
  \sphere^{2}_{++} = \left\{ (z_{1},z_{2},z_{3}) \in
    \sphere^{2}\subset \reals^{3} \mid z_{2}\geq 0, z_{3}\geq 0\right\}.
\end{equation*}
The quarter-sphere $\sphere^{2}_{++}$ is a manifold with corners and
has two boundary hypersurfaces defined by the boundary defining
functions $z_{2}$ and $z_{3}$.  We let $\cf$ (or the \emph{conic
  face}) be the hypersurface defined by the function
\begin{equation*}
  z_{2} = \frac{r}{\sqrt{1+t^{2}+r^{2}}}
\end{equation*}
and we let $\mf$ (or the \emph{main face}) be the face defined by
\begin{equation*}
  z_{3} = \frac{1}{\sqrt{1+t^{2}+r^{2}}}.
\end{equation*}
The boundary hypersurface $\mf$ plays an outsized role in the
manuscript and is often referred to as $X$ when considered on its own.

\begin{figure}
  \centering
  \includegraphics{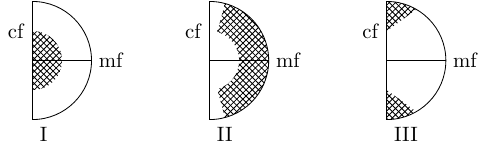}
  \caption{Regions I, II, and III in $\sphere^2_{++}$}
  \label{fig:regions}
\end{figure}

Having defined the smooth structure of this compactification, it is
often convenient to work with other equivalent boundary defining
functions in different regions.  We define regions I, II, and III (the
shaded regions in Figure~\ref{fig:regions}) as follows: We let region
I denote a fixed neighborhood in $\sphere^{2}_{++}$ bounded away from
$\mf$; region II is a neighborhood of $\mf$ bounded away from $\cf$;
finally, region III is a neighborhood of the corners $\mf\cap \cf$.
For concreteness, we can take region I to be given by $\{\abs{t}, r \leq
10\}$, region II to be $\{ r \geq 2, r \geq \abs{t}/2\}$, and region
III to be $\{ \abs{t} \geq 2, \abs{t}\geq r/2\}$.  Note that region
III has two connected components; we typically work with only one
component at a time.

In defining the Mellin transform below, it is useful to have a fixed
boundary defining function for $\mf$.  For this purpose, we let $\rho$
denote a defining function for $\mf$ that is equal to $\frac{1}{t}$
for $t/r > 1/2$ and equal to $\frac{1}{-t}$ for $t/r < -1/2$.

We now describe several convenient boundary defining functions valid
in the different regions.  For notational convenience, we always use
$\rho$ (or $\rhot$) to denote a defining function for $\mf$ and $x$ to denote a
defining function for $\cf$.  In region I, (where we are bounded away
from $\mf$), it is convenient to take $x=r$, while in region II (where
we are bounded away from $\cf$), we can take $\rhot = 1/r$.  Finally,
in region III (the source of most of the new technical work in this
manuscript), it is typically convenient to take $\rho = \pm 1/t$ and
$x = r / \abs{t}$.  \emph{Because polyhomogeneity is independent of
  the choice of equivalent boundary defining functions, one can typically
  use whichever boundary defining functions are most convenient at the
  time.}

On the $(1+1)$-dimensional Lorentzian manifold $\sphere^{2}_{++}$, we
employ coordinate systems specialized to the different regions.  In
region I, we employ $x=r$ and use coordinates $(t, x)$; the Lorentzian
metric here has the familiar form
\begin{equation*}
  -dt^{2} + dx^{2}.
\end{equation*}
In region II, the metric has the form of a short-range asymptotically
Minkowski metric as employed by the first author and
collaborators~\cite{BVW1}; we use $(\rhot, y)$ as coordinates, where
$\rhot = 1/r$ and $y = t/r$.  The metric in this coordinate system has
the form
\begin{equation*}
  - \frac{dy^{2}}{\rhot^{2}} + 2y \frac{dy}{\rhot}\frac{d\rhot}{\rhot^{2}}
  + (1-y^{2})\frac{d\rhot^{2}}{\rhot^{4}}.
\end{equation*}
Near the corner (region III), in terms of $(\rho, x)$ the metric has
the form
\begin{equation*}
  - (1-x^{2}) \frac{d\rho^{2}}{\rho^{4}} - 2x
  \frac{dx}{\rho}\frac{d\rho}{\rho^{2}} + \frac{dx^{2}}{\rho^{2}}.
\end{equation*}

For the more general case of $M^{\circ}= \reals \times C(Z)$, we take
$M$ to be the closure of the image of $M^{\circ}$ under the map
$\reals \times (0,\infty) \times Z \to \sphere^{2}\times Z$ given
by
\begin{equation*}
  (t,r,z) \mapsto \left( \frac{(t,r,1)}{\sqrt{1+t^{2}+r^{2}}}, z\right).
\end{equation*}
In other words, we take $M = \sphere^{2}_{++}\times Z$ to be the
compactification of $M^{\circ}$ to a manifold with corners.

In region I, the metric is the spacetime metric on a conic manifold
studied by Melrose--Wunsch~\cite{MW} (and later by
Melrose--Vasy--Wunsch~\cite{MVW}).  In region II, $g$ has the form
\begin{equation}
  \label{eq:metric-in-r2}
  g = -\frac{dy^{2}}{\rhot^{2}} + 2y \frac{dy}{\rhot}
  \frac{d\rhot}{\rhot^{2}} + (1-y^{2})\frac{d\rhot^{2}}{\rhot^{4}} + \frac{\met}{\rhot^{2}},
\end{equation}
which is again a short-range asymptotically Minkowski metric (though
written in somewhat different coordinates than those in~\cite{BVW1}).

Near the corner (region III), the metric has the form
\begin{equation}
  \label{eq:metric-in-r3}
  g = - (1-x^{2})\frac{d\rho^{2}}{\rho^{4}} - 2x \frac{dx}{\rho}
  \frac{d\rho}{\rho^{2}} + \frac{dx^{2}}{\rho^{2}} + \frac{x^{2}\met}{\rho^{2}}.
\end{equation}
This metric is a hybrid of a Lorentzian scattering metric (in that it
is built from $1$-forms of the type $d\rho / \rho^{2}$ and
$\alpha/\rho$) and a conic type metric (in that it degenerates as
$x\to 0$).

\begin{remark}
  \label{rem:generalizations}
  There are a number of natural extensions to the product cone setting
  that require little additional work.  All of the results and proofs
  in this manuscript (other than the explicit characterization of
  exponents) apply to the setting where $g$ is a Lorentzian metric on
  $M = \sphere^{2}_{++}\times Z$ that is
  \begin{enumerate}
  \item a spacetime conic metric (so that the results of
    Melrose--Wunsch~\cite{MW} apply) in region I,
  \item a (long-range or short-range) asymptotically Minkowski metric
    in region II, and 
  \item a hybrid in region III.  In other words, in region III, we
    demand that $g$ is built out of $d\rho^{2}/\rho^{2}$, $dx/\rho$,
    and $dz/\rho$ and that its leading order behavior as $x\to 0$ (in
    terms of these objects) is
    \begin{equation*}
      -\frac{d\rho^{2}}{\rho^{4}} + \frac{dx^{2}}{\rho^{2}} + \frac{x^{2}\met}{\rho^{2}}.
    \end{equation*}
  \end{enumerate}
\end{remark}

\subsection{The radiation field blow-up}
\label{sec:radiation-field-blow}

In this section we recall from previous work~\cite{BVW1,BVW2} the
construction of the manifold with corners on which the radiation field
naturally lives.

Consider the submanifold $S = \{ \rho = 0, y = \pm 1\}$ in region II ($S$ is
given by $\{ \rho = 0, x = 1\}$ in region III).  This submanifold
naturally splits into two components according to whether $\pm t > 0$
near the component.  We use $S_{\pm}$ to denote these two pieces; they
split the complement of $S$ in $\mf$ into three connected components.
We use $C_{0}$ to denote those points in $\mf$ where $y \in (-1,1)$,
while the subset of $\mf$ in region III where $x < 1$ has two
components, denoted $C_{\pm}$ according to whether $\pm t > 0$
nearby.

\begin{figure}
  \centering
  \includegraphics{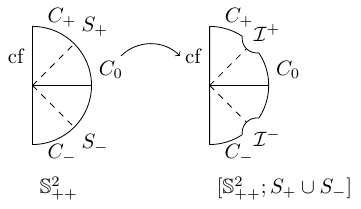}
  \caption{A schematic view of the radiation field blow-up.  The lapse
    function $s$ increases along $\scri^+$ towards $C_+$.}
  \label{fig:rf-blowup}
\end{figure}

We now \emph{blow up} $S$ in $M$ by replacing it with its inward
pointing spherical normal bundle.\footnote{The reader may wish to
  consult Melrose's book~\cite{Melrose:APS} for more details of the
  blow-up construction.}  In the product cone setting, this is
equivalent to blowing up a pair of points in $\sphere^{2}_{++}$ and
then taking the product with $Z$.  This process replaces $M$ with a
new manifold $\overline{M} = [ M ; S]$ on which polar coordinates
around the submanifold are smooth; the smooth structure of this
manifold with corners depends only on the submanifold $S$ (and not on
the particular choice of defining functions).  The blow-up is equipped
with a natural blow-down map $\overline{M}\to M$; this map is a
diffeomorphism on the interior. Figure~\ref{fig:rf-blowup} depicts
this blow-up construction.

The new space $\overline{M}$ is again a manifold with corners and has
six boundary hypersurfaces: the closure of the lifts of the interiors
of $C_{0}$ and $C_{\pm}$ to $\overline{M}$, which are again denoted by
$C_{0}$ and $C_{\pm}$; the lift of $\cf$, again denoted $\cf$, and two
new boundary hypersurfaces consisting of the pre-images of $S_{\pm}$
under the blow-down map.  These two new hypersurfaces are called
future/past null infinity and denoted by $\scri^{\pm}$.  Moreover,
$\scri^{\pm}$ is naturally a (trivial) fiber bundle over $S_{\pm}$
with fibers diffeomorphic to intervals.  Indeed, the interior of each
fiber is naturally an affine space (i.e., $\reals$ acts by
translations, but there is no natural origin).  In terms of $y$ and
$\rho$, the fibers of the interior of $\scri^{\pm}$ in $\overline{M}$
can be identified with $\reals \times Z$ via the coordinate
$s = \pm (y \mp 1)/\rho$.  In other words, $s=t-r$ provides a
coordinate along $\scri^{+}$ and $s = r+t$ is a coordinate along
$\scri^{-}$.

In our setting, Friedlander's argument~\cite{Friedlander:1980,
  Friedlander:2001} shows that for solutions $w$ of the wave equation
$\Box_{g}w = 0$ with smooth, compactly supported initial data, the
restriction
\begin{equation*}
  \mathcal{R}_{\pm}[w](s,z) = \rho^{-\frac{n-1}{2}}w \rvert_{\scri^{\pm}}
\end{equation*}
is well-defined and smooth.  This is \emph{Friedlander's radiation
  field}.\footnote{Note that our definition differs from Friedlander's
  by the absence of a derivative.}

\section{Basics of $\bl$-geometry}
\label{sec:basics-bl-geometry}

The main results in this paper require an understanding of the
interaction between Melrose's $\bl$-calculus and differential
operators on cones.  In light of the compactification described above,
we begin by recalling results about analysis on manifolds with
corners.  Some of the discussion in the next few sections is adapted
from prior work of the first author~\cite{BVW1, BVW2}, while a more
thorough discussion of $\bl$-geometry can be found in Melrose's
book~\cite[Chapter 4]{Melrose:APS}.  In the context of manifolds with
corners, we refer the reader to Melrose's unpublished
book~\cite{Melrose:MWC} and to Vasy's work~\cite{Vasy08}.

Throughout the paper we let $M$ denote a compact $(n+1)$-dimensional
manifold with corners and $X$ a compact $n$-dimensional manifold with
boundary.  A function $\rho \in C^{\infty}(M)$ is a boundary defining
function for a boundary hypersurface $H$ of $M$ if $\rho$ vanishes
simply at $H$ and is non-vanishing elsewhere.  A codimension $k$
corner is the intersection of $k$ boundary hypersurfaces of $M$.  Near
a codimension $k$ corner $H_{1}\cap \dots \cap H_{k}$, we may use
\begin{equation*}
  \left(\rho_{1}, \dots, \rho_{k}, y\right) \in [0,1)^{k} \times
  \reals^{n+1-k}
\end{equation*}
as coordinates on $M$, where $\rho_{j}$ is a boundary defining
function for $H_{j}$ and $y$ are coordinates along the corner
$H_{1}\cap \dots \cap H_{k}$.

As our main applications involve corners of codimension no greater
than two, we now specialize to that case.  We assume now that $M$ has
corners of codimension two and that $\rho$ and $x$ are boundary
defining functions (to keep consistent with notation above) in a
neighborhood of a codimension two corner.  Further keeping consistent
with our notation, we use $z$ to denote the remaining coordinates.

The space of $\bl$-vector fields on $M$, denoted $\mathcal{V}_{\bl}(M)$, is the
space of smooth vector fields on $M$ tangent to $\pd M$.  Near a
codimension $2$ corner $\{\rho = x = 0\}$,
$\mathcal{V}_{\bl}(M)$ is spanned over $C^{\infty}(M)$ by the vector
fields $\rho\pd[\rho], x\pd[x], \pd[z]$.
The vector field $\rho\pd[\rho]$ is called the $\bl$-normal
vector field to the boundary hypersurface $\{\rho=0\}$ and is independent
of choice of coordinate system as an element of
$\mathcal{V}_{\bl}(M)/\rho\mathcal{V}_{\bl}(M)$.

In fact, $\mathcal{V}_{\bl}(M)$ is a Lie algebra and is the space of
smooth sections of a vector bundle (called the $\bl$-tangent bundle)
$\Tb M$ over $M$.  The sections of its dual bundle $\Tbstar M$ are
locally spanned near a codimension $2$ corner over $C^{\infty}(M)$ by
the $1$-forms $d\rho / \rho, dx/x$, and
$dz$.  

The $\bl$-cotangent bundle $\Tbstar M$ is equipped with a canonical
$1$-form, which can be written
\begin{equation}
  \label{eq:canon-1-form}
  \taub \frac{d\rho}{\rho} + \xib \frac{dx}{x} + \zetab \cdot dz
\end{equation}
in local coordinates near a codimension $2$ corner.  The fiber
compactification $\Tbstarbar M$ of $\Tbstar M$ is given by radially
compactifying each fiber.  A defining function for the ``boundary at
infinity'' of a fiber is given by
\begin{equation*}
  \nu = \left( \taub^{2} + \xib^{2} + \abs{\zetab}^{2}\right)^{1/2},
\end{equation*}
and near infinity we may use
\begin{equation*}
  \nu, \widehat{\taub} = \nu \taub, \widehat{\xib} = \nu \xib ,
  \widehat{\zetab} = \nu \zetab
\end{equation*}
as a redundant set of coordinates on each fiber near $\{ \nu = 0, \rho
= 0, x = 0\}$.\footnote{Strictly speaking, we should regard
  $(\widehat{\taub}, \widehat{\xib} ,\widehat{\zetab})\in \sphere^{n}$
  and then regard $(\nu, \widehat{\taub}, \widehat{\xib}
  ,\widehat{\zetab})$ as ``polar coordinates'' near infinity.}  We let
$\Sbstar M$ denote the boundary at infinity of $\Tbstarbar M$, i.e.,
$\{ \nu = 0\}$.

The $\bl$-cotangent bundle further inherits a canonical symplectic
structure where the symplectic form is given by the exterior
derivative of the canonical $1$-form.  (In other words, the natural
symplectic structure on $T^{*}M$ extends to $\Tbstar M$.)  If we write
covectors in $\Tbstar M$ in local coordinates as
\begin{equation*}
  \taub \frac{d\rho}{\rho} + \xib \frac{dx}{x} + \zetab\cdot dz,
\end{equation*}
then the symplectic form is given by
\begin{equation}
  \label{eq:sympl-form}
  d\taub \wedge \frac{d\rho}{\rho} + d\xib \wedge \frac{dx}{x} +
  d\zetab\wedge dz.
\end{equation}

As $\mathcal{V}_{\bl}(M)$ is a Lie algebra, we also consider its
universal enveloping algebra, denoted $\Diffb^{*}(M)$.  Near the
codimension $2$ corner defined by $\{ \rho = x = 0\}$, an operator $A
\in \Diffb^{m}(M)$ has the form
\begin{equation}
  \label{eq:form-of-diffb}
  A = \sum_{j + k + \abs{\alpha}\leq m}a_{jk\alpha} (\rho, x, z)
  \left( \rho D_{\rho}\right)^{j}\left( x D_{x}\right)^{k}D_{z}^{\alpha},
\end{equation}
where $a_{jk\alpha}\in C^{\infty}(M)$.  The principal symbol of such
an operator is given by
\begin{equation*}
  \sigmab (A) = \sum_{j+k+\abs{\alpha}=m}a_{jk\alpha}(\rho,x,z) \taub^{j}\xib^{k}\zetab^{\alpha}.
\end{equation*}

The semiclassical version of $\Diffb^{m}(M)$, denoted
$\Diffbh^{m}(M)$, is similarly defined with a parametric dependence on
a small parameter $h> 0$.  In local coordinates, an operator $A \in
\Diffbh^{m}(M)$ has the form
\begin{equation}
  \label{eq:form-of-diffbh}
  A = \sum_{j + k + \abs{\alpha}\leq m}a_{jk\alpha} (\rho, x, z;h)
  \left( h\rho D_{\rho}\right)^{j}\left( hx D_{x}\right)^{k}(hD_{z})^{\alpha}
\end{equation}
where $a_{jk\alpha}\in C^{\infty}(M)$ are bounded in $h$.  In fact we
require $\Diffbh^{*}$ only in the context of the manifold with
boundary $X$.  The semiclassical principal symbol of such an operator
captures the leading order behavior, i.e., up to $h\Diffbh^{m-1}(M)$.

While the principal symbol of a differential operator captures its
high-frequency behavior, it fails to characterize the asymptotic
behavior at the boundary.  At each boundary face, there is a
dilation-invariant model operator, called the \emph{normal operator}
that captures this behavior.  We require this operator only at the
face given by $\{\rho = 0\}$, where it is obtained by freezing the
coefficients of $\rho D_{\rho}$, $x D_{x}$, and $D_{z}$ at $\rho =
0$.  In other words, $N(A) \in \Diffb^{m}([0,\infty) \times \{ \rho =
0\})$ and is given by
\begin{equation}
  \label{eq:normal-op-defn}
  N(A) = \sum_{j + k + \abs{\alpha} \leq m} a_{jk\alpha}(0,x,z) (\rho
  D_{\rho})^{j} (x D_{x})^{k} D_{z}^{\alpha}.
\end{equation}

Just as the Fourier transform is central to the study of approximately
translation-invariant operators, the \emph{Mellin transform} is useful
in the study of approximately dilation-invariant operators.  For the
main application of this paper, we need only the Mellin transform
associated to a single boundary hypersurface $H= \{ \rho = 0\}$.
Suppose $u$ is a distribution on $M$ suitably localized near the
boundary hypersurface $H$ defined by $\rho$.  The Mellin transform of
$u$ associated to $H$ is defined by
\begin{equation*}
  \us = \mathcal{M} u (\sigma, x,z) = \int_{0}^{\infty}\chi(\rho) u(\rho, x,
  z) \rho^{-i\sigma-1}\,d\rho,
\end{equation*}
where $\chi$ is a smooth compactly supported function that is equal to
$1$ near $\rho = 0$.

The Mellin conjugate of the operator $N(A)$ is known as the
\emph{reduced normal operator}.\footnote{We require this construction
  only for differential operators, though it extends to
  $\bl$-pseudodifferential operators as well.}  For $N(A)$ given by
the formula~\eqref{eq:normal-op-defn} above, the reduced normal
operator is the family of operators on the boundary hypersurface $H$
given by
\begin{equation}
  \label{eq:reduced-normal-op-defn}
  \widehat{N}(A) = \sum_{j+k+\abs{\alpha} \leq
    m}a_{jk\alpha}(0,x,z)\sigma^{j}(x D_{x})^{k}D_{z}^{\alpha}.
\end{equation}

The Mellin transform is particularly useful in the study of asymptotic
expansions in powers of $\rho$ and $\log \rho$.  We first discuss the
case where $M$ has only a single boundary hypersurface, i.e., when $M$
is a manifold with boundary.  In particular, we recall from
Melrose~\cite[Section 5.10]{Melrose:APS} that if $u$ is a distribution
on a manifold with boundary, we say that \emph{$u$ is polyhomogeneous
  with index set $E$} if and only if $u$ is conormal to $\pd M$ (in
particular, $u$ is smooth away from the boundary), and
\begin{equation*}
  u \sim \sum_{(z,k)\in E} \rho^{iz} (\log \rho)^{k}a_{zk},
\end{equation*}
where $a_{zk}$ are smooth functions on $\pd M$.  Here the expansion
should be interpreted as an asymptotic series as $\rho \to 0$ and $E$
is an \emph{index set} and therefore must satisfy\footnote{We have
  adopted the index set convention of Melrose's unpublished
  book~\cite{Melrose:MWC} rather than the other
  reference~\cite{Melrose:APS} to remain consistent with the first
  author's prior work~\cite{BVW1, BVW2}.}
\begin{itemize}
\item $\ds E \subset \complexes \times \{ 0, 1, 2, \dots \}$,
\item $E$ is discrete,
\item if $(z_{j}, k_{j}) \in E$ with $\abs{(z_{j},k_{j})}\to \infty$,
  then $\Im z_{j} \to -\infty$, 
\item if $(z,k) \in E$, then $(z,l) \in E$ for all $l = 0, 1, \dots,
  k-1$, and 
\item if $(z,k) \in E$, then $(z-ij, k)\in E$ for all $j=1,2,\dots$.
\end{itemize}
We refer the reader to Melrose's book~\cite[Section 5.10]{Melrose:APS}
for a discussion of the naturality of these conditions.  As an
example, the functions that are smooth up to $\pd M$ are
polyhomogeneous with index set $E = \{ (-ij, 0) : j = 0, 1, 2,
\dots\}$.

Polyhomogeneous distributions are characterized in two different ways:
by the Mellin transform and by the application of scaling (or
\emph{radial}) vector fields.  To see the former, we recall a
characterization of this space given by Melrose~\cite[Proposition
5.27]{Melrose:APS}.  For a given index set $E$, a distribution $u$ is
polyhomogeneous with index set $E$ if and only if its Mellin transform
is meromorphic with poles of order $k$ only at points $z$ for which
$(z,k-1)\in E$ (together with appropriate decay estimates in
$\sigma$).

Alternatively, we may test for polyhomogeneity by using radial vector
fields.  Let $R$ denote the radial vector field $\rho D_{\rho}$.  We
characterize a polyhomogeneous distribution $u$ with index set $E$ by
the requirement that for all $A$, there is a $\gamma_{A}$ with
$\gamma_{A}\to +\infty$ as $A\to +\infty$ so that
\begin{equation}
  \label{eq:phg-charac-b1}
  \left( \prod_{(z,k)\in E, \Im z > -A} (R-z)\right) u \in
  \rho^{\gamma_{A}}\Hb^{\infty}(M),
\end{equation}
where $\Hb^{\infty}(M)$ denotes the space of distributions conormal to
the boundary.

Our main theorem concerns polyhomogeneity at two boundary
hypersurfaces on a manifold with codimension $2$ corners.  We apply
this characterization below to the manifold $\overline{M}$, which has six
boundary hypersurfaces $\cf, C_{\pm}, C_{0}$, and $\scri^{\pm}$.  The
distributions we consider vanish identically near $\scri^{-},
C_{-}$, and $C_{0}$, so there are three hypersurfaces of interest.

In the characterization that follows, we let $H_{1} = \scri^{+}$,
$H_{2} = C_{+}$, and $H_{3} = \cf$ denote the relevant hypersurfaces;
for now we let $\rho_{1}$ define $H_{1}$, $\rho_{2}$ define $H_{2}$,
and $x$ define $H_{3}$.  We now define the space of \emph{partially
  polyhomogeneous} distributions with index sets $\cE =
(E_{1},E_{2})$.  
\begin{definition}
  \label{defn:partialy-phg}
  A distribution $u$ lies in $\pphg^{\cE}(\overline{M})$, the space of partially
  polyhomogeneous distributions with index sets $\cE$, if $u$ is
  conormal to all boundary hypersurfaces, and, for each $j=1,2$, we
  have
  \begin{equation*}
    u \sim \sum_{(z,k)\in E_{j}}a_{jzk}\rho^{iz}(\log \rho)^{k} \quad
    \text{mod } \rho_{j}^{\infty}\rho_{3-j}^{-A}x^{-B}\Hb^{\infty}(\overline{M}),
  \end{equation*}
  where $A$ is some fixed number greater than $\sup \{ \Im z \mid
  (z,k) \in E_{j}, j=1,2\}$, $B$ is some fixed number, and $a_{jzk}$
  are smooth at the hypersurface defined by $\rho_{j}$, conormal at
  $H_{3}$, and polyhomogeneous (with index set $E_{3-j}$) at the other one.
\end{definition}

When testing for (partial) polyhomogeneity at multiple boundary
hypersurfaces, it suffices to test individually at each one with
uniform estimates at the others.  This result is due independently to
Mazzeo~\cite[Appendix]{Economakis} and Melrose~\cite[Chapter
4]{Melrose:MWC} and is a consequence of a characterization by multiple
Mellin transforms.  In particular, we appeal to the following
proposition.
\begin{proposition}[cf. Mazzeo, Melrose]
  \label{prop:2-step-phg}
  Let $R_{j}$ denote $\rho_{j}D_{\rho_{j}}$, the radial vector field
  at the boundary hypersurfaces defined by $\rho_{j}$.  For $\cE =
  (E_{1}, E_{2})$, a distribution $u$ lies in $\pphg^{\cE}(\overline{M})$ if and only if
  it is conormal to all boundary hypersurfaces and for each $j=1,2$
  there are fixed weights $\alpha_{j}, \beta_{j}$ and for all $A$, there is a
  $\gamma_{j,A}$ with $\gamma_{j,A}\to +\infty$ as $A\to +\infty$, so
  that
  \begin{equation*}
    \left( \prod_{(z,k)\in E_{j}, \Im z > -A}(R_{j}-z)\right)u \in
    \rho_{j}^{\gamma_{j,A}}\rho_{3-j}^{\alpha_{j}}x^{\beta_{j}} \Hb^{\infty}(\overline{M}).
  \end{equation*}
\end{proposition}
In other words, applying the test~\eqref{eq:phg-charac-b1} above at
the boundary hypersurface $H_{j}$ defined by $\rho_{j}$ improves the
decay at $H_{j}$ at no cost to the growth/decay at the other
hypersurfaces.  Note that there is no requirement that the
coefficients be polyhomogeneous; their joint polyhomogeneity at
$H_{1}\cap H_{2}$ follows automatically when the condition is imposed
individually at $H_{1}$ and $H_{2}$.

\section{The operators $L$ and $\Ps$}
\label{sec:operators}

Friedlander's argument for the existence of the radiation field
motivates the definition below of the operator
\begin{equation*}
  L = \rho^{-2- (n-1)/2}\Box_{g} \rho^{(n-1)/2},
\end{equation*}
and its reduced normal operator $\Ps = \widehat{N}(L)$.  Because
changing the boundary defining functions by a smooth non-vanishing
multiple changes $L$ and $\Ps$ by a lower order term, we freely work
with whichever forms of the boundary defining functions are most
convenient.

For later reference, we record the forms of the operators $L$ and
$\Ps$ in region III, where the metric has the form as in
equation~\eqref{eq:metric-in-r3}.\footnote{We do not use the explicit
  form of the operators in region II and instead appeal standard
  hyperbolic propagation estimates as in previous work~\cite{BVW1}
  there.}  Indeed, we write (using $D = \frac{1}{i}\pd$):
\begin{align*}
  L &= \left( \rho D_{\rho} + x D_{x} \right)^{2} - ni\left( \rho
      D_{\rho} + x D_{x}\right) - D_{x}^{2} + \frac{(n-1)i}{x}D_{x} -
      \frac{1}{x^{2}}\lap_{\met} - \frac{n^{2}-1}{4}, \\
  \Ps &= \left( x D_{x} + \sigma\right) - ni \left( x D_{x} +
        \sigma\right) - D_{x}^{2} + \frac{(n-1)i}{x}D_{x} -
        \frac{1}{x^{2}}\lap_{\met} - \frac{n^{2}-1}{4}.
\end{align*}

In the main propagation results of Sections~\ref{sec:prop-sing-bulk}
and~\ref{sec:prop-sing}, we require an understanding of the Hamilton
flow of the principal symbols of the operators $L$ and $\Ps$.  We
start by describing this flow near $\cf$ (in $M$) and near $\pd X =
\mf \cap \cf$ (in $X = \mf$).

\subsection{Broken bicharacteristics for the operator $L$}
\label{sec:operator-l}

We now aim to describe set of broken bicharacteristics
along which singularities may propagate.  Perhaps the shortest path to
their characterization involves the \emph{edge cotangent bundle},
which we describe shortly.  Moreover, the propagation arguments in the
bulk spacetime $M$ in Section~\ref{sec:prop-sing-bulk} require
commuting $\bl$-pseudodifferential operators through the differential
operators naturally associated to the conic metric.  It is therefore
convenient to introduce a small amount of the edge calculus machinery
(namely, the bundles and the differential operators) introduced by
Mazzeo~\cite{rafe1991elliptic}.  We specialize our description to the specific
setting in which we work, though the calculus applies in much more
general settings.  In an abuse of notation, we use the term ``edge''
to refer to objects that behave as edge objects at $\cf$ and as
$\bl$-objects at $\mf$.  The reader wishing to skip this section need
only note that the space of edge differential operators $\Diffe^{*}$
and the compressed characteristic set $\Sigmadot$ are referred to
later.

Our use of the edge machinery is limited to a neighborhood of the
boundary hypersurface $\cf$ corresponding to the conic singularity.
This boundary hypersurface is the total space of a trivial fiber
bundle:
\begin{equation*}
  \begin{tikzcd}
    Z \arrow[r, dash] & \cf \arrow[d] \\
    & I
  \end{tikzcd}
\end{equation*}
Here $I$ is a compactification of $\reals$ to an interval; $t$ is
locally a coordinate on the interior of $I$ while $\rho$ provides a
coordinate near each endpoint of $I$.

The set of edge-vector fields, typically denoted $\mathcal{V}_{\e}$,
consists of those $\bl$-vector fields that are tangent to the leaves
of the fibration.  In local coordinates $(\rho, x, z)$ where $x$ is
the boundary defining function for $\cf$ and $z$ is a coordinate along
$Z$, $\mathcal{V}_{\e}$ is spanned over $C^{\infty}$ by
\begin{equation*}
  x\pd[x], \quad x\rho \pd[\rho], \quad \text{and} \quad \pd[z_{j}].
\end{equation*}
The Lie algebra $\mathcal{V}_{\e}$ is the space of smooth sections of
a vector bundle (called the $\e$-tangent bundle) $\Te M$ over
$M$.\footnote{Strictly speaking, as a global object, we are
  considering a mixed edge-$\bl$-tangent bundle, but our arguments are
  essentially local so we do not stress this point.}  Its dual is the
$\e$-cotangent bundle $\Testar M$.

We let $\Diffe^{*}(M)$ denote the universal enveloping algebra of
$\mathcal{V}_{e}(M)$.  An element $A\in \Diffe^{m}(M)$ near $\mf \cap
\cf$ has the form
\begin{equation*}
  A = \sum_{j+k+\abs{\alpha}\leq m}a_{jk\alpha}(\rho,x,z) (x\rho
  D_{\rho})^{j}(x D_{x})^{k}D_{z}^{\alpha},
\end{equation*}
where the $a_{jk\alpha}$ are smooth on $M$.  The operator $L$ is an
element of $x^{-2}\Diffe^{2}(M)$; this relationship is exploited below
in Section~\ref{sec:prop-sing-bulk}.

Canonical coordinates on $\Testar M$ induced by the coordinates
$(\rho, x, z)$ are $(\rho, x, z, \taue, \xie, \zetae)$, which
corresponds to writing covectors as
\begin{equation*}
  \taue \frac{d\rho}{x\rho} + \xie \frac{dx}{x} + \zetae \cdot dz.
\end{equation*}
One then obtains a bundle map $\pi : \Testar M \to \Tbstar M$ given in
these coordinates by
\begin{equation*}
  \pi \left( \rho, x, z, \taue, \xie, \zetae\right) = \left( \rho , x,
    z, \taub = \taue, \xib = x\xie, \zetab = x\zetae\right).
\end{equation*}
In other words, the map $\pi$ is given by $\omega \mapsto x\omega$,
which is an isomorphism $\Testar M \to \Tbstar M$ away from $x=0$.

Away from $x=0$, the bicharacteristics (in this case lifts of
geodesics to the $\bl$-cotangent bundle) of $L$ are the integral
curves of the $\bl$-Hamilton vector field of the $\bl$-principal
symbol of $L$.  As $(M,g)$ is incomplete owing to the conic
singularity of $C(Z)$, we must clarify what we mean by
bicharacteristics that hit the cone point.  As we are interested in
wave equations, we restrict our attention to \emph{null
  bicharacteristics}, i.e., those lying in the characteristic set of
$L$.

We define now the \emph{compressed cotangent bundle} by
\begin{equation*}
  \Tbdotstar M = \pi (\Testar M) / Z, \quad \dot{\pi} : \Testar M \to
  \Tbdotstar M,
\end{equation*}
where the quotient by $Z$ acts only over the boundary; the topology is
given by the quotient topology.  Observe that $\Tbdotstar_{\cf} M$ can
be identified with $\Tbstar I$; in terms of coordinates $(\rho, x, z,
\taue, \xie, \zetae)$ on $\Testar M$, $\pi (\Testar_{\cf}M)$ is given
by points of the form $(\rho, 0, z, \taub, 0, 0)$.  After the
quotient, $\rho$ and $\taub$ provide coordinates on $\Tbstar I$.

In an abuse of notation (but following
Melrose--Vasy--Wunsch~\cite[Section 7]{MVW}), we introduce
\begin{align*}
  \pi (\Sestar M) &= \left( \pi (\Testar M) \setminus 0 \right) /
                    \reals^{+} \subset \Sbstar M, \\
  \dot{\pi} \left( \Sestar M\right) &= \left( \dot{\pi}(\Testar M) \setminus
  0\right) / \reals^{+} \subset \Sbdotstar M,
\end{align*}
where $\Sbstar M$ and $\Sestar M$ are quotients of their respective
cotangent bundles by the natural scaling action and $\Sbdotstar M =
\Sbstar M / Z$ with the quotient acting over $\cf$.

We now observe that $x^{2}L\in \Diffe^{2}(M)$; near $\mf \cap \cf$,
its edge-principal symbol is
\begin{equation*}
  \sigma_{\e}(x^{2}L) = (\taue + x\xie)^{2} - \xie^{2} -
  \abs{\zetae}^{2}
\end{equation*}

As $\cf$ is noncharacteristic for $L$, nonzero covectors in the
edge-characteristic set of $x^{2}L$ (i.e., the vanishing set for
$\sigma_{e}(x^{2}L)$) are mapped to nonzero covectors by $\pi$ and
$\dot{\pi}$.  We can thus define the \emph{compressed characteristic set}
\begin{equation*}
  \Sigmadot = \dot\pi (\Sigma), 
\end{equation*}
where $\Sigma\subset \Sestar M$ is the edge-characteristic set of
$x^{2}L$.  Over $x=0$, $\Sigmadot = \Sbdotstar_{\cf}M$, i.e.,
\begin{equation*}
  \Sigmadot \rvert_{\cf} = \left\{ (\rho, x=0, z, \taub, 0, 0) \mid
    \tau \neq 0, z\in Z\right\} / Z.
\end{equation*}
In the parlance of Melrose--Vasy--Wunsch, all of the points of $\Sigmadot$ lying
over $\cf$ are hyperbolic.

There are many equivalent and nearly-equivalent definitions of
generalized broken bicharacteristics (see, e.g.,
Melrose--Vasy--Wunsch~\cite{MVW} or Vasy~\cite{Vasy08}), but in the
present context they can instead be described more simply.  Away from
$\cf$ they are lifts to $\Sbstar M$ of maximally extended light-like
geodesics of $\rho^{2}g$.  At $\cf$, they are concatenations of
bicharacteristics that are continuous as functions to $\Sigmadot$.

In particular, at $\cf$, the broken bicharacteristics are
concatenations of lifts of light-like geodesics entering and exiting
$\cf$; the continuity condition requires that they enter and leave
``at the same time'' (i.e., with the same $\rho$ or $t$ coordinate
along $I$) and with the same ``time momentum'' (i.e., the same value
of $\taub$).  More precisely, straightforward ODE analysis shows
that in the edge cotangent bundle, null bicharacteristics enter
$\Sestar_{\cf}M$ with coordinates
\begin{equation*}
  (\rho, 0, z_{0}, \taue, \xie, 0),
\end{equation*}
with $\taue^{2} = \xie^{2}$.  They then leave $\Sestar_{\cf}M$ from
the point
\begin{equation*}
  (\rho, 0, z_{1}, \taue, -\xie, 0),
\end{equation*}
where $z_{1}$ is a possibly different point in $Z$.\footnote{In other
  words, the direction in which the bicharacteristic leaves the cone
  point need not have any relation to the direction in which it
  entered.  In the parlance of Melrose--Wunsch~\cite{MW}, these are
  the ``diffractive'' bicharacteristics.}  The main result of
Section~\ref{sec:prop-sing-bulk} below is to show that singularities
of $L$ propagate only along these broken bicharacteristics.

\subsection{Broken bicharacteristics for the operator $\Ps$}
\label{sec:operator-ps}

The classical propagation for the operator $\Ps$ near $\pd X = \{
x=0\}$ is simpler to describe as $\Ps$ is classically elliptic there:
there is no propagation.  On the other hand, the related semiclassical
operator
\begin{equation*}
  \Ph = h^{2} \Ps, \quad h = \abs{\sigma}^{-1},
\end{equation*}
is not semiclassically elliptic.

We consider the characteristic set $\Sigmah$ of the operator $\Ph$
near $x=0$.  The principal symbol of $\Ph$ in this region is
\begin{equation*}
  \sigmabh (\Ph) = \left( \lambda + \xib\right)^{2} -
  \frac{\xib^{2}}{x^{2}} - \frac{1}{x^{2}}\abs{\zetab}^{2}
\end{equation*}
where $\lambda = \sigma / \abs{\sigma} = \pm 1 + \bo (h)$.  Its
Hamilton vector field is
\begin{equation*}
  \frac{2}{x^{2}}\left( (x^{2}\xib + x^{2}\lambda - \xib)x\pd[x]
  - (\xib^{2} + \abs{\zetab}^{2})\pd[\xi]\right) - \frac{1}{x^{2}}H_{\abs{\zetab}^{2}},
\end{equation*}
where $H_{\abs{\zetab}^{2}}$ is the Hamilton vector field of the
metric function $\met^{-1}$.  Within the characteristic set of $\Ph$,
the only trajectories reaching $x=0$ reach points of the form
\begin{equation*}
  (x=0, z\in Z, \xib=0, \zetab=0),
\end{equation*}
i.e., the analogue of the compressed characteristic set for the
semiclassical operator is the zero section over the boundary.

An analogous construction to the one described for the operator $L$
shows that over $\pd X = \mf \cap \cf$, we have
\begin{equation*}
  \Sigmah\rvert_{\pd X} = \left\{ (x=0, z, \xib=0, \zetab = 0) \mid z
    \in Z \right\},
\end{equation*}
and that the broken bicharacteristics for the Hamilton flow of the
principal symbol of $\Ph$ must enter and leave through a point of this
form with possibly different $z$ values.  

\subsection{The radial sets}
\label{sec:radial-sets}

We finally describe the \emph{radial sets} for the Hamilton flow
associated to the operators $L$ and $\Ph$; these are the sets where
the Hamilton vector field of the principal symbol is a multiple of the
radial vector field $\xib \pd[\xib] + \zetab \pd[\zetab]$.  In both
cases, the radial sets are identical to those described in prior
work~\cite{BVW1}; we include them here for the purpose of
completeness but refer to that work for their characterization.

The radial sets for $\Ps$ and $\Ph$ occur at $N^{*}S_{\pm}$; boundary of the
fiber-compactification acts as a source or sink for the global flow.
We define $\Lambda^{\pm}$ to be the fiber-infinity boundary of
$N^{*}S_{\pm}$ in $\Tbstarbar X$; in
our analysis below we aim to propagate regularity from the radial set
living over $S_{-}$ to the one living over $S_{+}$.

The radial points $\mathcal{R}^{\pm}$ of $L$ also lie over $S_{\pm}$; in terms of
coordinates $(\rho, x, z, \taub, \xib, \zetab)$ in region III, their
image $\pd \mathcal{R}^{\pm} \subset \Tbstarbar M$ in the cosphere bundle is given by
\begin{equation*}
  \left\{ (\rho = 0, x = 1, z, \taub = 0, \xib, \zetab = 0) \mid z \in
    Z, \xib = \pm 1\right\}.
\end{equation*}

\section{Pseudodifferential operators}
\label{sec:pseud-oper}

The main results of this paper all rely on the interaction between
spaces defined using both $\bl$-pseudodifferential operators and conic differential
operators.  While these interactions were key in the analysis of
Melrose--Wunsch in \cite{MW}, their structure was codified and explained
by Vasy in \cite{Vasy08}.  We now describe the spaces of
$\bl$-pseudodifferential operators employed below as well as their
interactions with the generators of the conic differential operators.

\subsection{The homogeneous $\bl$-calculus}
\label{sec:homog-bl-calc}

We now briefly describe the spaces $\Psib^{m}$, $\Psibinf^{m}$, and
$\Psib^{m,\ell}$ of $\bl$-pseudodifferential operators on the bulk
spacetime $M$.  Rather than provide detailed definitions and proofs,
we instead provide a list of their properties and refer the reader to
Melrose's unpublished book~\cite{Melrose:MWC} and Vasy's paper~\cite{Vasy08}
for details.

Our discussion in this section is specialized to a neighborhood of
$\mf \cap \cf$ (region III) in $M$; the relevant results in region I
can be quoted, while the results in region II can be recovered by
assuming that $x$ is bounded away from $0$.

The space of $\bl$-pseudodifferential operators $\Psib^{*}(M)$ is the
``quantization'' of the Lie algebra of vector fields tangent to the
boundary of $M$ and formally consists of operators of the form
\begin{equation*}
  b\left(\rho, x, z, \rho D_{\rho} , x D_{x}, D_{z}\right),
\end{equation*}
where $b$ is a classical symbol (i.e., it is smooth on $\Tbstar M$ and
has a complete asymptotic expansion at fiber infinity).  In terms of
coordinates $(\rho, x, z)$ near the corner $\mf \cap \cf$, we may
write an explicit quantization of the symbol $b$ by
\begin{align*}
  \Op (b) u (\rho, x, z) &= \frac{1}{(2\pi)^{n+1}} \int\int e^{i(\rho
                           - \rho ' )\taub + i (x-x')\xib + i
                           (z-z')\cdot \zetab} \phi \left( \frac{\rho
                           - \rho'}{\rho}\right) \phi
                           \left(\frac{x-x'}{x}\right) \psi (z) \\
  &\quad\cdot b(\rho, x, z, \rho \taub, x \xib, \zetab)u(\rho', x', z')\,d\taub\,d\xib\,d\zetab\,d\rho'\,dx'\,dz',
\end{align*}
where $\phi \in C^{\infty}_{c}((-1/2,1/2))$ is identically $1$ near
$0$, $\psi$ localizes to a region of $Z$ where the local coordinate
$z$ is valid, and the integrals in $\rho'$ and $x'$ are over
$[0,\infty)$.

We further define the multi-filtered algebra $\Psib^{m,\ell}(M) =
\rho^{-\ell}\Psib^{m}(M)$.  The index $\ell$ refers only to the
filtration in $\rho$; we do not explicitly rely on a filtration in $x$
later in the text.

Our regularization arguments in Section~\ref{sec:near-singular-points}
rely (in a similar way to those of Melrose--Vasy--Wunsch~\cite{MVW})
on a slightly larger algebra we call $\Psibinf^{*}(M)$.  It is defined
in the same way but with symbols satisfying Kohn--Nirenberg estimates
(rather than having complete asymptotic expansions).

The algebra $\Psib^{m,\ell}(M)$ satisfies the following properties:
\begin{enumerate}[i.]
\item The principal symbol of a $\bl$-differential operator, defined
  by
  \begin{equation*}
    \sigma_{\bl,m, \ell} \left( \rho^{-\ell}\sum_{j + k +
        \abs{\alpha}\leq m}a_{jk\alpha}(\rho D_{\rho})^{j}(xD_{x})^{k}D_{z}^{\alpha}\right) = \rho^{-\ell}\sum_{j+k+\abs{\alpha}=m}a_{jk\alpha}\taub^{j}\xib^{k}\zeta^{\alpha},
  \end{equation*}
  extends continuously to give a map
  \begin{equation*}
    \sigma_{\bl,m,\ell} : \Psib^{m,\ell}(M) \to
    \rho^{-\infty}C^{\infty}(\Sbstar M).
  \end{equation*}
  The principal symbol map is multiplicative, i.e., $\sigma (AB) =
  \sigma (A) \sigma(B)$.

  In the case of $\Psibinf^{m}(M)$, the principal symbol instead takes
  values in the quotient of the symbol spaces
  \begin{equation*}
    S^{m}(\Tbstar M)/S^{m-1}(\Tbstar M),
  \end{equation*}
  which in the case of classical symbols can be identified with
  $C^{\infty}(\Sbstar M)$.
  
  The principal symbol captures the top order behavior (in $m$) of
  elements of $\Psib^{m,\ell}(M)$.  In other words, the following
  sequence is exact:
  \begin{equation*}
    0 \to \Psib^{m-1,\ell}(M) \to \Psib^{m,\ell}(M) \to
    \rho^{-\ell}C^{\infty}(\Sbstar M) \to 0.
  \end{equation*}
  (In the case of $\Psibinf^{m}$, the symbol space must be replaced by
  the quotient $S^{m}/S^{m-1}$.)
  
\item There is a (non-canonical) quantization map $\Op :
  \rho^{-\ell}S^{m}(\Tbstar M) \to \Psib^{m,\ell}(M)$ so that
  \begin{equation*}
    \sigma_{\bl,m,\ell}(\Op(a)) = a
  \end{equation*}
  as an element of $\rho ^{-\ell}S^{m}(\Tbstar M)/ \rho
  ^{-\ell}S^{m-1}(\Tbstar M)$.  
\item The algebras $\Psib^{m,\ell}(M)$ and $\Psibinf^{m}(M)$ are
  closed under adjoints, and
  \begin{equation*}
    \sigma (A^{*}) = \overline{\sigma (A)}.
  \end{equation*}
\item If $A \in \Psib^{m,\ell}(M)$ and $B\in \Psib^{m',\ell'}(M)$,
  then $[A, B] = AB-BA \in \Psib^{m+m'-1, \ell + \ell'}(M)$, and
  \begin{equation*}
    \sigma_{\bl, m + m' -1 ,\ell + \ell'} \left( i [A,B]\right) =
    \left\{ \sigma (A), \sigma(B)\right\},
  \end{equation*}
  where the right hand side denotes the Poisson bracket induced by the
  symplectic structure on $\Tbstar M$ as in Section~\ref{sec:basics-bl-geometry}.
\item Elements of $\Psib^{0}(M)$ are bounded on $L^{2}$.  In
  particular, given $A \in \Psib^{0}(M)$, there is an $A'\in
  \Psib^{-1}(M)$ so that
  \begin{equation*}
    \norm{Au}_{L^{2}} \leq 2 \sup \abs{\sigma(A)}\norm{u}_{L^{2}} + \norm{A'u}_{L^{2}}.
  \end{equation*}
  
\item If $A \in \Psib^{m,\ell}(M)$ (or $\Psibinf^{m}(M)$), the
  \emph{microsupport} (or operator wavefront set)
  $\WFb'(A) \subset \Sbstar M$ of $A$ is the set of points and
  directions in which the total symbol of $A$ fails to be rapidly
  decaying, and obeys the usual microlocality property:
  \begin{equation*}
    \WFb'(AB) \subset \WFb'(A) \cap \WFb'(B).
  \end{equation*}
\end{enumerate}

The analysis below requires commuting $\bl$-pseudodifferential
operators with the components $D_{x}, \frac{1}{x}D_{z}$, and
$\frac{1}{x}$ of the operators on the cone.  As commutators with
$\frac{1}{x}D_{z}$ are not necessarily lower order, we are careful to
select commutants that commute with derivatives in $z$ to top order.
In other words, we require the notion of a \emph{basic operator}
introduced by Melrose--Vasy--Wunsch~\cite[Section 9]{MVW}.
\begin{definition}
  We say a symbol $a \in C^{\infty}(\Tbstar M)$ is basic if
  $\pd[z]a = 0$ at $\{x =0, \xib=0, \zetab=0\}$.  The quantization of
  such a symbol is called a basic operator.
\end{definition}

We now recall from Melrose--Vasy--Wunsch~\cite[Lemma 8.6]{MVW} how
the $\bl$-calculus interacts with $\frac{1}{x}, D_{x},$ and
$\frac{1}{x}D_{z_{j}}$.
\begin{lemma}
  \label{lem:commuting-through}
  If $A\in \Psib^{m}(M)$, then there are $B\in \Psib^{m}(M)$ and $C\in
  \Psib^{m-1}(M)$ depending continuously on $A$ so that
  \begin{equation*}
    i \left[ D_{x}, A\right] = B + C D_{x},
  \end{equation*}
  with $\sigma(B) = \pd[x]\sigma(A)$ and $\sigma(C) =
  \pd[\xib]\sigma(A)$.

  Similarly, there are $C_{L} , C_{R}\in \Psib^{m-1}(M)$ with
  $\sigma(C) = \pd[\xib]\sigma(A)$ so that
  \begin{equation*}
    i \left[ \frac{1}{x}, A \right] = C_{L}\frac{1}{x} = \frac{1}{x} C_{R}.
  \end{equation*}

  If, in addition, $A$ is a basic operator, then
  \begin{equation*}
    i \left[ \frac{1}{x}D_{z_{j}} , A\right] = B_{j} +C_{j}D_{x} +
    \sum_{k}E_{jk}\frac{1}{x}D_{z_{k}} + \frac{1}{x}F_{j},
  \end{equation*}
  with $B_{j}\in \Psib^{m}(M)$, $C_{j}, E_{jk}, F_{j} \in
  \Psib^{m-1}(M)$, and
  \begin{equation*}
    \pd[z_{j}]\sigma(A) + \zetab_{j}\pd[\xib]\sigma(A) = x
    \sigma(B_{j}) + \xib \sigma (C_{j}) + \sum_{k}\zetab_{k}\sigma (E_{jk}).
  \end{equation*}
\end{lemma}

As in the work of Vasy~\cite{Vasy08} and
Melrose--Vasy--Wunsch~\cite{MVW}, we define
\begin{equation*}
  x^{-k}\Diffe^{k}\Psib^{m}\subset x^{-k}\Psib^{k+m}
\end{equation*}
to be the span of the products $QA$ with $Q \in x^{-k}\Diffe^{k}$ and
$A \in \Psib^{m}$.  By Lemma~\ref{lem:commuting-through}, it is also
generated by the products $AQ$ and so the union
\begin{equation*}
  \bigcup_{k,m}x^{-k}\Diffe^{k}\Psib^{m}
\end{equation*}
is a bigraded ring closed under adjoints.  This observation allows us
to freely commute elements of $x^{-k}\Diffe^{k}$ through
$\bl$-pseudodifferential operators at the cost of lower order terms.

\subsection{The semiclassical $\bl$-calculus}
\label{sec:semicl-bl-calc}

On the boundary hypersurface $X = \mf$, we further employ the
$\bl$-calculus as well as its semiclassical variant $\Psibh^{*}$.  In
this section we briefly describe properties satisfied by the
semiclassical $\bl$-calculus $\Psibh^{*}(X)$.  We refer the reader to
Gannot--Wunsch~\cite[Section 3]{GW} for details.  We remind the reader
that $X$ is an $n$-dimensional compact manifold with
boundary.\footnote{The shift in dimension arises because we employ the
  semiclassical calculus only on $X= \mf$ rather than the bulk $M$.}

We can obtain an explicit quantization procedure on $X$ near its boundary
in terms of coordinates $(x,z)$ by fixing $\phi \in
C^{\infty}_{c}((-1/2,1/2))$ so that $\phi (s) \equiv 1$ near $s=0$ and
$\psi\in C^{\infty}_{c}$ localizing to a fixed coordinate chart in $z$.
Given a semiclassical symbol $a\in S^{m}_{h}(\Tbstar X)$, define
$\Opbh(a)\in \Psibh^{m}(X)$ by
\begin{align*}
 &  \Opbh (a)u(x,z) =  \\
& \hspace{.5cm}  \frac{1}{(2\pi h)^{n}}\int \int e^{i\left(
      (x-x')\xib + (z-z')\cdot \zetab\right)} \phi\left(
    \frac{x-x'}{x}\right) \psi (z) a(x,z,x\xib, \zetab)u(x',z') \,d\xib \,d\zetab\,dx'\,dz'.
\end{align*}

As in the homogeneous setting, the space of semiclassical
$\bl$-pseudodifferential operators on $X$ satisfies the following
properties:
\begin{enumerate}[i.]
\item There is a principal symbol map $\sigmabh : \Psibh^{m}(X) \to
  S^{m}(\Tbstar X) / hS^{m-1}(\Tbstar X)$ so that the sequence
  \begin{equation*}
    0 \to h \Psibh^{m-1}(X) \to \Psibh^{m}(X) \to S^{m}(\Tbstar X) / h
    S^{m-1}(\Tbstar X) \to 0
  \end{equation*}
  is exact.  This map is multiplicative.
\item There is a (non-canonical) quantization map $\Opbh :
  S^{m}(\Tbstar X) \to \Psibh^{m}(X)$ so that if $a \in S^{m}(\Tbstar
  X)$, then
  \begin{equation*}
    \sigmabh (\Opbh (a)) = a
  \end{equation*}
  as an element of $S^{m}(\Tbstar X) / h S^{m-1}(\Tbstar X)$.
\item The algebra $\Psibh^{*}(X)$ is closed under adjoints and
  \begin{equation*}
  \sigmabh (A^{*}) = \overline{\sigmabh(A)}.
\end{equation*}
\item If $A \in \Psibh^{m}(X)$ and $B \in \Psibh^{m'}(X)$, then
  $[A,B]\in h\Psibh^{m+m'-1}(X)$ and has principal symbol
  \begin{equation*}
    \sigmabh \left( \frac{i}{h}\left[A,B\right]\right) = \left\{
      \sigmabh(A), \sigmabh(B)\right\},
  \end{equation*}
  where the Poisson bracket is taken with respect to the symplectic
  structure on $\Tbstar X$.
\item Each $A \in \Psibh^{0}(X)$ extends to a bounded operator on
  $L^{2}$ and there exists $A' \in \Psibh^{-\infty}(X)$ so that
  \begin{equation*}
    \norm{Au}_{L^{2}} \leq 2 \sup \abs{\sigmabh(A)}\norm{u}_{L^{2}} +
    \bo (h^{\infty}) \norm{A'u}_{L^{2}}.
  \end{equation*}
\item If $A \in \Psibh^{*}(X)$, the microsupport (or operator
  wavefront set) $\WFbh'(A) \subset \Tbstar X$ is the set of points in
  the $\bl$-cotangent bundle at which $A$ fails to lie in
  $h^{\infty}\Psibh^{-\infty}$.  It obeys the standard microlocality property:
  \begin{equation*}
    \WFbh'(AB) \subset \WFbh'(A) \cap \WFbh'(B).
  \end{equation*}
\end{enumerate}

As in the homogeneous setting, we say that a \emph{basic operator} is
the quantization of a symbol $a$ with $\pd[z] a = 0$ at $\{ x = 0,
\xib = 0, \zetab = 0\}$.  We also require the semiclassical analogue
of Lemma~\ref{lem:commuting-through}, with proof essentially identical
to the one in the homogeneous setting.

\begin{lemma}[{cf.~\cite[Lemma 8.6]{MVW} and~\cite[Lemma 3.6]{GW}}]
  \label{lem:commuting-through-scl}
  If $A \in \Psibh^{m}(X)$, there are $B \in \Psibh^{m}(X)$ and $C\in
  \Psibh^{m-1}(X)$ so that
  \begin{equation*}
    \frac{i}{h}\left[ hD_{x}, A\right] = B + C (hD_{x}),
  \end{equation*}
  with $\sigmabh (B) = \pd[x]\sigmabh (A)$ and $\sigmabh (C) =
  \pd[\xib]\sigmabh(A)$.

  Moreover, there are $C_{L}, C_{R} \in \Psibh^{m-1}(X)$ with
  $\sigmabh(C_{\bullet}) = \pd[\xib]\sigmabh (A)$ and
  \begin{equation*}
    i \left[ \frac{1}{x} , A\right] = \frac{h}{x}C_{R} = C_{L} \frac{h}{x}.
  \end{equation*}

  If, in addition, $A$ is a basic operator, then
  \begin{equation*}
    \frac{i}{h} \left[ \frac{h}{x}D_{z_{j}}, A \right] = B_{j} +
    C_{j}(hD_{x}) + \sum_{k}E_{jk}\frac{h}{x} D_{z_{k}} + \frac{h}{x}F_{j},
  \end{equation*}
  where $B_{j}\in \Psibh^{m}(X)$, $C_{j}, E_{jk}, F_{j}\in
  \Psibh^{m-1}(X)$, and
  \begin{equation*}
    \pd[z_{j}]\sigmabh(A) + \zetab_{j} \pd[\xib]\sigmabh(A) = x
    \sigmabh(B_{j}) + \xib \sigmabh(C_{j}) + \sum_{k}\zetab_{k}\sigmabh(E_{jk}).
  \end{equation*}
\end{lemma}

Just as in the homogeneous setting,
Lemma~\ref{lem:commuting-through-scl} allows us to freely commute
factors of $hD_{x}$, $\frac{h}{x}D_{z_{j}}$, and $\frac{1}{x}$ through
semiclassical $\bl$-pseudodifferential operators at the cost of lower
order terms.

\section{Function spaces}
\label{sec:funct-spac-wavefr}

As described above, our analysis is based on mixed differential-pseudodifferential
structures on both $M$ and $\mf$.  The associated analytic objects we
employ are therefore adapted to the Friedrichs form domain of the
conic Laplacian.

We denote by $\dom$ the Friedrichs form domain of the Laplacian on the
cone $C(Z)$, i.e., the domain of $\lap^{1/2}$, where $\lap$ is the
Friedrichs extension of the Laplacian.  It is equipped with a natural
norm given by
\begin{equation*}
  \norm{u}_{\dom}^{2} = \norm{u}^{2} + \langle \lap u , u \rangle,
\end{equation*}
where the norm and inner product are taken with respect to the $L^{2}$
space induced by the conic metric on $C(Z)$.  Writing the Laplacian in
coordinates, the norm on $\dom$ is controlled by
\begin{equation*}
  \norm{v} + \norm{\pd[r] v} + \norm{r^{-1}\grad_{z}v},
\end{equation*}
where the pointwise magnitude of the last term is measured by the
metric $k$ on the cross-section.  

Just as in Euclidean space in three dimensions and higher, $\dom$
enjoys an analogue of the Hardy inequality:
\begin{lemma}[{\cite[Lemma 5.2]{MVW}}]
  \label{lem:base-domain}
  If $\dim Z > 1$, then there is some $C$ so that for all $v \in
  C^{\infty}_{c}((0,\infty)\times Z)$,
  \begin{equation*}
    \norm{r^{-1}v}^{2} +\norm{v}^{2}+ \norm{r^{-1}\grad_{z}v}^{2} +
    \norm{\pd[r]v}^{2} \leq C \norm{v}_{\dom}^{2}.
  \end{equation*}
\end{lemma}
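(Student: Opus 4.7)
The plan is to reduce everything to a weighted one-dimensional Hardy inequality via separation of variables on $Z$. The two terms $\norm{D_x v}^2$ and $\norm{x^{-1}D_z v}^2$ are essentially free: under the conic metric $dx^2 + x^2 k$, the squared gradient norm $|dv|^2_g = |D_x v|^2 + x^{-2}|D_z v|^2_k$ pointwise, and hence both of these quantities are bounded by $\norm{dv}^2 \leq \Norm[\dom]{v}^2$. So the entire content is the Hardy-type inequality $\norm{x^{-1}v}^2 \leq C \Norm[\dom]{v}^2$, which I will extract from $\norm{dv}^2 + \norm{v}^2$.

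Next, I decompose $v \in \dot{\CI}((0,\infty) \times Z)$ in an orthonormal eigenbasis $\{\phi_j\}$ of $\lap_k$ on the compact Riemannian manifold $Z$, with eigenvalues $0 = \mu_0^2 < \mu_1^2 \leq \mu_2^2 \leq \cdots$; write $v(x,z) = \sum_j v_j(x) \phi_j(z)$. Using the volume form $x^{n-1}\sqrt{k}\, dx\, dz$, one computes
\begin{equation*}
\norm{x^{-1}v}^2 = \sum_j \int_0^\infty |v_j(x)|^2 \, x^{n-3}\, dx, \qquad \norm{dv}^2 = \sum_j \int_0^\infty |v_j'(x)|^2 \, x^{n-1}\, dx + \sum_j \mu_j^2 \int_0^\infty |v_j(x)|^2 \, x^{n-3}\, dx.
\end{equation*}
For every mode with $j \geq 1$, the coefficient $\mu_j^2 \geq \mu_1^2 > 0$ provides the bound $\int |v_j|^2 x^{n-3}\, dx \leq \mu_1^{-2} \mu_j^2 \int |v_j|^2 x^{n-3}\, dx$, which is already dominated by $\norm{dv}^2$.

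The main obstacle (and the reason for the hypothesis $\dim Z > 1$) is the constant mode $v_0$, for which $\mu_0 = 0$ and the angular term provides nothing. Here I would invoke the classical one-dimensional weighted Hardy inequality: for any $f \in C^\infty_c((0,\infty))$ and any exponent $n > 2$,
\begin{equation*}
\int_0^\infty |f(x)|^2 \, x^{n-3}\, dx \leq \frac{4}{(n-2)^2} \int_0^\infty |f'(x)|^2 \, x^{n-1}\, dx.
\end{equation*}
This is proved by integrating $\frac{d}{dx}(x^{n-2}) = (n-2) x^{n-3}$ against $|f|^2$, integrating by parts, and applying Cauchy--Schwarz; the factor $(n-2)^{-2}$ collapses precisely when $\dim Z = 1$, so that $n = 2$, matching the hypothesis. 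Summing the mode-by-mode bounds yields $\norm{x^{-1} v}^2 \leq C(\norm{dv}^2 + \norm{v}^2) = C \Norm[\dom]{v}^2$, and the lemma then follows by density (the estimate extends from $\dot{\CI}$ to $\dom$). Once this bound is established, the remaining two terms on the left-hand side are absorbed into $\norm{dv}^2$ as noted above, completing the proof.
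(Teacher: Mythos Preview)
Your argument is correct. The paper itself does not give a proof of this lemma; it simply quotes it from Melrose--Vasy--Wunsch~\cite[Lemma~5.2]{MVW} and then remarks that the statement fails when $\dim Z = 1$. Your eigenfunction decomposition combined with the one-dimensional weighted Hardy inequality on the zero mode is the standard route to this estimate (and is essentially what lies behind the cited result). One minor remark: for the modes $j\ge 1$ you do not even need Hardy, since the angular part of $\norm{dv}^{2}$ already contains $\mu_{j}^{2}\int |v_{j}|^{2}x^{n-3}\,dx$; the Hardy constant $4/(n-2)^{2}$ is only invoked for the constant mode, and its blow-up at $n=2$ is exactly the failure the paper flags for $\dim Z = 1$. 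You also do not actually use the $\norm{v}^{2}$ part of the domain norm, which is fine.
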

We often appeal to Lemma~\ref{lem:base-domain} and its analogues in
order to estimate terms of the form $r^{-1}v$ arising in commutator
estimates; the lemma asserts that they can be controlled by one
``conic derivative''.  Just as the Hardy inequality fails in dimension
two, Lemma~\ref{lem:base-domain} is false when $\dim Z = 1$.  On the
other hand, when $\dim Z = 1$, the manifold $Z$ must be a circle, hence
modifications similar to those used by
Melrose--Vasy--Wunsch~\cite[Section 10]{MVW} allow us to recover the
propagation results of Sections~\ref{sec:prop-sing-bulk}
and~\ref{sec:prop-sing} in this case.  For the purpose of exposition,
we omit these arguments in this paper.

\subsection{$\bl$-Sobolev spaces on the bulk $M$}
\label{sec:bl-sobolev-spaces}

Although it is standard in the $\bl$-calculus literature to define
$\bl$-Sobolev spaces with respect to a fixed $\bl$-density, the proofs
in Sections~\ref{sec:prop-sing-bulk} and~\ref{sec:prop-sing} below
more naturally employ a cone-type density, i.e., a rescaling of the
density reflecting the conic structure of the problems.

In other words, on $M$ we consider the density associated to the
Lorentzian metric $\rho^{2}g$, which in local coordinates has the form
\begin{equation*}
  \frac{x^{n-1}\sqrt{\met}}{\rho}\,d\rho\,dx\,dz.
\end{equation*}
All $L^{2}$ norms on $M$ are taken with respect to this density.

We let $\Hb^{m}(M)$ denote the $\bl$-Sobolev space of order $m$
relative to the function space $L^{2}(M)$ and the algebras
$\Diffb^{m}(M)$ and $\Psib^{m}(M)$.  In particular, for $m \geq 0$, if
$A \in \Psib^{m}(M)$ is a fixed invertible elliptic operator, then
$u \in \Hb^{m}(M)$ if and only if $u, Au \in L^{2}(M)$.\footnote{If
  $m$ is a positive integer, $\Hb^{m}$ can be characterized in terms
  of $\Diffb^{m}(M)$.  A characterization for other values of $m$ then
  follows by interpolation and duality.}  For $m< 0$, the space
$\Hb^{m}(M)$ is defined as the dual space of $\Hb^{-m}(M)$ with
respect to the $L^{2}(M)$ pairing.  We further require an additional
filtration of the Sobolev spaces.  For $\ell \in \reals$, we set
$\Hb^{m,\ell}(M) = \rho^{\ell}\Hb^{m}(M)$ as defined in for instance Section $2$ of \cite{MW}.

In an abuse of notation, we use $\domst$ to denote a differential
Sobolev space of order $1$ on the spacetime $M$:
\begin{definition}
  We let $\domst$ denote the set of functions $u \in \Hb^{1}(M)$ for
  which the norms $\norm{\pd[x]u}$ and $\norm{x^{-1}\grad_{z}u}$ are
  both finite.
\end{definition}
Just as it is well-known (see, e.g., \cite[Section
3]{MW}\footnote{Because we adopt the convention that $L^{2}$ and
  $\bl$-Sobolev are measured with respect to the metric density on
  $C(Z)$ rather than a $\bl$-density, this identification of $\dom$
  with a $\bl$-Sobolev space differs from the one in that paper by a
  factor of $r^{-n/2}$.}) that $\dom = r \Hb^{1}(C(Z))$, we could
instead define $\domst$ as a type of weighted $\bl$-Sobolev space with only
partial regularity.  It is convenient, however, for our purposes, to
ensure that $\rho\pd[\rho]$ and $\pd[x]$ are on nearly equal footing.

Away from $\cf$, $\domst$ is a standard $\bl$-Sobolev space (defined
with respect to the density above).  Near $\cf$, it inherits the norm
\begin{equation*}
  \norm{u}_{\domst}^{2} = \norm{u}^{2} + \norm{\rho\pd[\rho]u}^{2}+ \norm{\pd[x] u}^{2} + \norm{x^{-1}\grad_{z}u}^{2},
\end{equation*}
and is closed with respect to this norm. Just as in
Lemma~\ref{lem:base-domain}, we have the additional Hardy-type
inequality
\begin{equation*}
  \norm{x^{-1}u}\leq C \norm{u}_{\domst}.
\end{equation*}

As solutions of the wave equation are not typically $L^{2}$ in time,
we require a weighted variant of $\domst$: for $\ell \in \reals$, we
let $\rho^{-\ell}\domst$ denote the space of those $u$ for which
$\rho^{\ell} u \in \domst$.  The $\rho^{-\ell}\domst$ norm of a
distribution $u$ is the $\domst$ norm of $\rho^{\ell}u$.

Integrating energy estimates shows that solutions of the wave equation
with compactly supported finite energy initial data\footnote{We state
  and prove the result for the forward problem with smooth compactly
  supported initial data, but an inspection of the proof reveals that
  it needs only finite energy and compact support.} lie in
$\rho^{-\ell}\domst$ for some $\ell$.

\begin{lemma}
  \label{lem:there-are-admissible-solutions}
  If $u$ is the forward solution of $Lu = f$, where $f \in
  \CI_{c}(M^{\circ})$, then there is some $\ell \in \reals$ so that $u
  \in \rho^{-\ell}\domst$.
\end{lemma}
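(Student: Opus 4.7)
The plan is to reduce the claim to a standard energy estimate for the wave equation $\Box_{g} w = F$ and then unpack the conversion back to $u$. Setting $w = \rho^{(n-1)/2} u$, the hypothesis $Lu = f$ with $f \in \CI_{c}(M^{\circ})$ becomes $\Box_{g} w = F$ where $F = \rho^{2+(n-1)/2} f \in \CI_{c}(M^{\circ})$. Since $u$ is by assumption the forward solution, so is $w$, vanishing in the causal past of $\supp F$.

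First, I would invoke standard existence and regularity for the wave equation driven by the self-adjoint Friedrichs extension of $\lap_{C(Z)}$: the forward solution $w$ exists globally and satisfies $w \in C^{0}(\reals_{t}; \dom) \cap C^{1}(\reals_{t}; L^{2}(C(Z)))$, where $\dom$ is the Friedrichs domain. For $t$ outside a neighborhood of the temporal extent of $\supp F$, energy conservation gives that $\Norm[\dom]{w(t,\cdot)} + \Norm[L^{2}(C(Z))]{\pd[t] w(t,\cdot)}$ is bounded uniformly in $t$. Second, finite speed of propagation (standard in the conic setting, cf.~\cite{CT1,CT2}) confines the spatial support of $w(t,\cdot)$ to $\{r \leq R + |t|\}$ for some $R$ determined by $\supp F$.

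Third, I would convert these bounds on $w$ into a weighted $\domst$-statement for $u$. By definition, $\Norm[\domst]{u}^{2} = \Norm[L^{2}(\rho^{2}g)]{du}^{2} + \Norm[L^{2}(\rho^{2}g)]{u}^{2}$, and the density associated to $\rho^{2}g$ differs from that of $g$ by a fixed power of $\rho$. Using the explicit forms of the metric in regions II and III (equations~\eqref{eq:metric-in-r2} and~\eqref{eq:metric-in-r3}) together with $u = \rho^{-(n-1)/2} w$, a routine computation expresses the $\rho^{-\ell}\domst$-norm of $u$ in terms of $\rho^{\ell}$-weighted $L^{2}$ integrals of the time-slice quantities controlled in the first step. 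The Hardy-type control of Lemma~\ref{lem:base-domain} ensures in particular that factors of $x^{-1}$ near the cone point are absorbed into the $\dom$-norm. Choosing $\ell$ sufficiently negative to dominate the linear-in-$t \sim \rho^{-1}$ growth of the spatial support from the second step then renders the resulting time integral convergent, proving $u \in \rho^{-\ell}\domst$.

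The main obstacle I anticipate is the bookkeeping near the corner $\mf \cap \cf$: one must confirm that the uniform-in-$t$ $\dom$-regularity of $w(t,\cdot)$ survives both the rescaling $w \mapsto \rho^{-(n-1)/2} w$ and the passage from the $g$-density to the $\rho^{2}g$-density without producing new singularities at $\cf$. Since the argument is insensitive to the precise value of $\ell$, however, this reduces to a careful but entirely standard check rather than a sharp estimate.
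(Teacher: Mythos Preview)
Your proposal is correct and follows exactly the approach the paper indicates: the paper does not give a formal proof of this lemma, stating only that it ``follows immediately by energy conservation (indeed, polynomially growing energy estimates would suffice).'' Your argument via the conserved energy for $\Box_{g}w = F$ and the conversion back to the $\rho^{-\ell}\domst$ norm is precisely the intended justification, spelled out in detail; the finite-speed-of-propagation step is a harmless convenience but not strictly required once one has uniform-in-$t$ energy bounds.
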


\begin{proof}
  The lemma follows by energy bounds and changing variables.  Indeed,
  for each fixed $t$, standard energy estimates show that
  \begin{equation*}
    \int_{C(Z)} \left(\abs{\pd[t]u}^{2} + \abs{\pd[r]u}^{2} +
      \abs{\frac{1}{r} \grad_{z}u}^{2} \right) \sqrt{\met} r^{n-1}\,dr\,dz
    < C(f),
  \end{equation*}
  and so the Hardy-type inequality also shows that
  \begin{equation*}
    \int_{C(Z)}\abs{r^{-1}u}^{2}\sqrt{\met}r^{n-1}\,dr\,dz <C(f)
  \end{equation*}
  is uniformly bounded.  Integrating these estimates in time shows
  that for any $\alpha > 1/2$, we have
  \begin{equation*}
    \int_{t}\int_{C(Z)}\left( \abs{r^{-1}u}^{2}+ \abs{\pd[t]u}^{2} +
      \abs{\pd[r]u}^{2} + \abs{\frac{1}{r}\grad_{z}u}^{2}\right)
    \sqrt{\met}r^{n-1}\ang{t}^{-2\alpha}\,dr\,dz\,dt  <C_{\alpha}(f),
  \end{equation*}
   where this new constant differs from the previous one by a factor of
  $\int_{\reals}\ang{t}^{-2\alpha}\,dt$. 

  By the finite speed of propagation and possibly translating the
  coordinate system in $t$, it suffices to estimate the
  $\rho^{-\ell}\domst$ norm of $u$ in the region where $r \leq At$ for
  some $A > 1$.  We may use the boundary defining function
  $\rho = 1/t$ in this region and $x = r/t$ as a replacement for the
  radial coordinate and then the region corresponds to $x \leq A$.  We
  then aim to show that there is some $\ell$ for which
  \begin{equation*}
    \int_{0}^{\rho_{0}}\int_{0}^{A} \int_{Z} \left( \abs{u}^{2} + \abs{\rho
        \pd[\rho]u}^{2} + \abs{\pd[x] u}^{2} +
      \abs{x^{-1}\grad_{z}u}^{2}\right) \sqrt{\met} \,dz \,x^{n-1}\,dx \,
    \rho^{2\ell}\,\frac{d\rho}{\rho} < \infty.
  \end{equation*}
  Changing coordinates back to $(t,r,z)$, this is equivalent to
  estimating
  \begin{equation*}
    \int_{t_{0}^{-1}}^{\infty}\int_{0}^{At}\int_{Z}\left( \abs{u}^{2}
      + \abs{t\pd[t]u + r \pd[r]u}^{2} + \abs{t\pd[r]u}^{2} +
      \abs{\frac{t}{r}\grad_{z}u}^{2}\right) \sqrt{\met}\,dz
    r^{n-1}\,dr\, t^{-n+1-2\ell}\,\frac{dt}{t}.
  \end{equation*}
  As $r \leq At$ in this region, this integral is bounded by $C_{\ell
    + \frac{n}{2} - 1}(f)$, provided that $\ell -1 + n/2 > 1/2$.
\end{proof}

The main use of the space $\domst$ is to act as the base level against
which we measure regularity of distributions on $M$.  To that end, we
let $\Hbdst^{1}(M) = \domst$ and define, for $m \geq 1$ and
$\ell \in \reals$, the finite order conormal spaces
$\Hbdst^{m,\ell}(M)$:
\begin{definition}
  Let $A \in \Psib^{m-1}(M)$ be an invertible basic (in the sense of
  Section~\ref{sec:pseud-oper}) elliptic operator.  For $m\geq 1$, the space
  $\Hbdst^{m,\ell}(M)$ consists of those $u \in \rho^{\ell}\domst$ for
  which $Au \in \rho^{\ell}\domst$.
\end{definition}
In other words, $\Hbdst^{m,\ell}(M)$ consists of those distributions
conormal to $\mf$ and $\cf$ of finite order $m-1$ relative to
$\rho^{\ell}\domst$.  Away from $\cf$, they agree with the weighted
$\bl$-Sobolev spaces $\rho^{\ell}\Hb^{m}(M)$ and indeed we have the
inclusion $\Hbdst ^{m}\hookrightarrow \Hb^{m}(M)$.

The following lemma shows that these spaces do not depend on the
choice of basic $A$ (as in the work of Vasy~\cite[Remark
3.6]{Vasy08}), as basic operators of order $0$ preserve $\domst$.
\begin{lemma}
  \label{lem:psib-acts-on-st-domain}
  If $A \in \Psib^{0}(M)$ is a basic operator, then
  \begin{align*}
    A:\rho^{\ell}\domst \to \rho^{\ell}\domst, \quad A : \rho^{\ell}\domst' \to \rho^{\ell}\domst'
  \end{align*}
  are bounded.
\end{lemma}

\begin{proof}
  As conjugation by $\rho^{\ell}$ yields another basic element of
  $\Psib^{0}(M)$, we must prove the lemma only for $\ell = 0$.
  
  The result follows from the commutator expressions of
  Lemma~\ref{lem:commuting-through-scl}.  Indeed, to estimate
  $\norm{Au}_{\domst}$, it suffices to estimate the quantities
  \begin{equation*}
    \norm{\rho \pd[\rho] Au}, \norm{\pd[x] Au},
    \norm{\frac{1}{x}\grad_{z}Au}, \norm{Au},
  \end{equation*}
  where all norms taken are with respect to $L^{2}$.

  We show explicitly this bound only for the term
  $\frac{1}{x}\grad_{z}Au$; the $\pd[x]Au$ term is treated similarly
  while the other two terms amount to the boundedness of $\Psib$ on
  $\bl$-Sobolev spaces.  Appealing to
  Lemma~\ref{lem:commuting-through-scl}, we write
  \begin{equation*}
    \frac{1}{x}\pd[z_{j}] Au = A \frac{1}{x}\pd[z_{j}]u + B_{j}u +
    C_{j} \pd[x]u + \sum_{k}E_{jk}\frac{1}{x}D_{z_{k}}u + F_{j}\frac{1}{x}u ,
  \end{equation*}
  where $B_{j}\in \Psib^{0}$, and $C_{j}, E_{jk}, F_{j} \in
  \Psib^{-1}$.  As elements of $\Psib^{s}$ are bounded on $L^{2}$ for
  $s\leq 0$, we may then estimate
  \begin{equation*}
    \norm{\frac{1}{x}\pd[z_{j}]Au}_{L^{2}} \leq C \left(
    \sum_{k}\norm{\frac{1}{x}\pd[z_{k}]u}_{L^{2}} +
    \norm{\pd[x]u}_{L^{2}} + \norm{\frac{1}{x}u} +  \norm{u}_{L^{2}}
  \right) \leq C \norm{u}_{\domst}.
  \end{equation*}
\end{proof}

We finally describe a microlocal characterization of regularity, the
wavefront set.
\begin{definition}
  Let $u \in \Hbdst^{s,\ell}$ for some $s\geq 0$ and $\ell\in \reals$ and
  suppose that $m\geq 0$.  We say $q \in \Tbstar M\setminus 0$ is not
  in $\WFbdst^{m,\ell}(u)$ if there is some $A \in \Psib^{m,\ell}(M)$
  elliptic at $q$ so that $Au \in \domst$.

  For $m = \infty$, $q$ is not in $\WFbdst^{\infty,\ell}(u)$ if there
  is some $A\in \Psib^{m,\ell}(M)$ elliptic at $q$ with $Au \in \Hbdst^{\infty,\ell}$.
\end{definition}

Note that if $\WFbdst^{\infty,\ell}(u) = \emptyset$, then $u$ is
fully conormal to $\mf$ and $\cf$ relative to the space $\rho^{\ell}\domst$.

\subsection{Variable-order Sobolev spaces on the boundary $\mf$}
\label{sec:vari-order-sobol-1}

We now turn our attention to the function spaces on the boundary
$\mf$.  We fix a density on $\mf$ against which we integrate
functions; away from the boundary of $\mf$ we ask only that it be
smooth and nondegenerate, while at the boundary $\mf \cap \cf$ of
$\mf$, we demand that it take the following form in local coordinates
$(x, z)$:
\begin{equation*}
  x^{n-1}\sqrt{\met}\,dx\,dz.
\end{equation*}

Near $\mf\cap \cf$ (i.e., near the boundary of
$X=\mf$), the operator $\Ps$ is a conjugate of the Laplacian on a
hyperbolic cone (see Section~\ref{sec:proof-theor-refth}).  In fact,
near the boundary $\Ps$ differs from the Laplacian on $C(Z)$ by an
element of $\Diffb^{2}(\mf)$.  Lemma~\ref{lem:base-domain} applies on
$\mf$ as well, motivating the following abuse of notation:
\begin{definition}
  We let $\dom$ denote the space of functions on $\mf$ that:
  \begin{enumerate}
  \item lie in $H^{1}$ away from $\mf \cap \cf$, and 
  \item lie in the Friedrichs form domain of $\lap_{C(Z)}$ near $\mf
    \cap \cf$.  
  \end{enumerate}

  We let $\dom'$ denote the dual of $\dom$ with respect to the $L^{2}$ pairing.
\end{definition}

A more patently invariant way to define $\dom$ involves fixing an
invertible elliptic operator (in, e.g., Hintz's semiclassical cone
calculus \cite{hintz2020resolvents}) agreeing with $(1 + \lap_{C(Z)})^{1/2}$ near the
boundary.  As our function spaces depend on $\dom$ only near the
boundary, however, we need not take this approach.

As the main propagation result in Section~\ref{sec:prop-sing} is
semiclassical, we introduce a rescaled version of the domain norm,
denoted $\domh$.  For $u$ supported near the boundary of $\mf$, this
norm is given by
\begin{equation*}
  \norm{u}_{\domh}^{2} = \norm{u}^{2} + \norm{h \pd[x] u}^{2} + \norm{\frac{h}{x}\grad_{z}u}^{2}.
\end{equation*}
Lemma~\ref{lem:base-domain} then shows that $\norm{u}_{\domh}$ also
controls $h\norm{x^{-1}u}$.  As above, we use $\domh'$ to denote the
dual of $\domh$.

We point out that the characterization of $\dom$ stemming from
Lemma~\ref{lem:base-domain} shows that the inclusions $\dom
\hookrightarrow L^{2}$ and $L^{2}\hookrightarrow\dom'$ are compact.
This observation is crucial to the Fredholm statement proved in
Section~\ref{sec:fredholm-property}.  When $\dim Z = 1$, the
characterization of the Friedrichs form domain given by
Melrose--Wunsch~\cite[Equation 3.11]{MW} also shows the compactness of
these inclusions.

Just as in the bulk spacetime, pseudodifferential operators of order 0
(and their semiclassical counterparts) preserve these spaces.  The
following lemma is proved in the same way as its classical analogue
(Lemma~\ref{lem:psib-acts-on-st-domain}):
\begin{lemma}
  \label{lem:psib-semiclassical}
  If $A \in \Psibh^{0}(X)$ is a basic operator, then
  \begin{align*}
    A: \domh \to \domh, \quad A : \domh' \to \domh'
  \end{align*}
  are bounded.
\end{lemma}

As we aim to reduce problems on the bulk spacetime $M$ to problems on
its main boundary hypersurface $X = \mf$, we record the following
lemma relating the spaces $\dom$ and $\domst$.  The proof of the lemma
with $\domst$ replaced by a Sobolev space $H^{k}$ is standard; the
proof for $\domst$ proceeds identically.

\begin{lemma}[{cf.~\cite[Lemma 2.3]{BVW1}}]
  Suppose $u \in \rho^{-\ell}\domst$ and that $\chi_{1}, \chi_{2} \in
  C^{\infty}_{c}([0,\infty))$ with $\chi_{2}$ supported in $\{ x <
  1/4\}$.  The Mellin transform (in $\rho$) of
  $\chi_{1}(\rho)\chi_{2}(x)u$ is a holomorphic function for $\Im
  \sigma > \ell$ taking values in $L^{\infty}_{\Im\sigma}L^{2}_{\Re
    \sigma}(\reals ; \dom)$.
\end{lemma}

We now describe the Sobolev spaces on which $\Ps$ is a Fredholm
operator.  As in prior work, these have variable orders;
see~\cite[Appendix A]{BVW1} for details.

We fix a future regularity function $s_{\tow}: \Sbstar \mf \to \reals$
satisfying the following:
\begin{enumerate}
\item $s_{\tow}$ is constant near $\Lambda^{\pm}$ and $s_{\tow} \equiv
  1$ in a neighborhood of the conic singularity $\pd X$,
\item Along the flow in the classical characteristic set of $\Ps$
  (oriented so as to flow from $\Lambda^{-}$ to $\Lambda^{+}$),
  $s_{\tow}$ is monotonically decreasing, and
\item $s_{\tow}$ is less than the threshold exponent at $\Lambda^{+}$
  and greater than the threshold at $\Lambda^{-}$.  
\end{enumerate}
As the classical characteristic set of $\Ps$ lies solely over the
closure of $C_{0}$, the first condition is always compatible with the
second and third.

Note that the thresholds at $\Lambda^{\pm}$ are $\sigma$-dependent, 
so the spaces we consider necessarily depend on which operators in the
family $\Ps$ are under consideration.  Indeed, as in the previous
paper~\cite[Section 5]{BVW1}, the thresholds are given by
\begin{align*}
  \frac{1}{2} &+ \Im \sigma \quad \text{for }\Ps, \\
  \frac{1}{2} &- \Im \sigma \quad \text{for }\Ps^{*}.
\end{align*}

We further define $s_{\tow}^{*} = - s_{\tow} + 1$.  With these
functions in hand, we define (as in~\cite[Appendix A]{BVW1}) the
variable order Sobolev spaces $H^{s_{\tow}}$ and $H^{s_{\tow}^{*}}$
away from the conic singularity $\partial \mf$.  Recall that standard
elliptic regularity estimates still hold in these spaces; hyperbolic
propagation estimates also remain valid provided that the order
function is decreasing along the flow.

We now fix a partition of unity $\phi, 1-\phi \in \CI (\mf)$ so that
$\phi$ is supported near the conic singularity where $s_{\tow} \equiv
1$ and $1-\phi \equiv 0$ in a neighborhood of $\partial \mf$.  We now
define the spaces\footnote{In analogy with the definition of the
  $\domst$-based function spaces on the full spacetime, we could have
  defined the $\cY^{s_{\tow}}$ space more directly using $\dom$.  We
  take the approach above to avoid translating the variable-order
  Sobolev spaces into the $\bl$-setting.}
\begin{align*}
  \cY^{s_{\tow}-1} &= \left\{ u = (1-\phi) u_{1} + \phi u_{2} \mid
                     u_{1} \in H^{s_{\tow}-1}, \ u_{2}\in
                     L^{2}\right\}, \\
  \cY^{s_{\tow}^{*}-1} &= \left\{ u = (1-\phi)u_{1} + \phi u_{2} \mid
                         u_{1} \in H^{s_{\tow}^{*}-1}, \ u_{2} \in
                         \dom '\right\},
\end{align*}
where we have abused notation slightly: the spaces $\cY^{s_{\tow}}$
and $\cY^{s_{\tow}^{*}}$ differ by how they look near the conic
singularity.  As $s_{\tow} = 1$ near the cone points,
$\cY^{s_{\tow}-1}$ agrees with $L^{2}$ there, while $s_{\tow}^{*} = 0$ near
these points, so $\cY^{s_{\tow}^{*}-1}$ is a stand-in for $H^{-1}$
there.  We equip these two spaces with the norms
\begin{align*}
  \norm{u}_{\cY^{s_{\tow}-1}} ^{2} &=
                                     \norm{(1-\phi)u}_{H^{s_{\tow}-1}}^{2}
                                     + \norm{\phi u }_{L^{2}}^{2},\\
  \norm{u}_{\cY^{s_{\tow}^{*}-1}} ^{2} &=
                                         \norm{(1-\phi)u}_{H^{s_{\tow}^{*}-1}}^{2}
                                         + \norm{\phi u}_{\dom'}^{2}.
\end{align*}
The semiclassical versions of these norms are defined by replacing the
Sobolev part of the norm with a semiclassical Sobolev norm and
replacing the $\dom'$ part of the norm with the $\domh'$ norm.

We again rely on the localizer $\phi$ to define the $\cX^{s_{\tow}}$ spaces:
\begin{align*}
  \cX^{s_{\tow}} &= \left\{ u = (1-\phi)u_{1} + \phi u_{2} \mid u_{1}
                   \in H^{s_{\tow}} , \ u_{2} \in \dom, \ \Ps u \in
                   \cY^{s_{\tow}-1}\right\},\\
  \cX^{s_{\tow}^{*}} &= \left\{ u = (1-\phi) u_{1} + \phi u_{2} \mid
                       u_{1} \in H^{s_{\tow}^{*}}, \ u_{2} \in L^{2},
                       \ \Ps u \in \cY^{s_{\tow}^{*}-1}\right\}.
\end{align*}
We have abused notation in the same way as in the definitions of the
$\cY$ spaces.\footnote{Just as we built the Sobolev spaces in the full
  spacetime on top of $\domst$, we have built $\cX^{s_{\tow}}$ on
  $\dom$, $\cY^{s_{\tow}^{*}-1}$ on $\dom'$, and the other two spaces
  on $L^{2}$.}  Observe also that the condition on $\Ps u$ in the
definition of the $\cX$ spaces is independent of $\sigma$ as $\sigma$
only appears in subprincipal (both classically and semiclassically)
terms in $\Ps$.  The norms on the $\cX$ spaces are given by
\begin{align*}
  \norm{u}_{\cX^{s_{\tow}}}^{2} &=
                                  \norm{(1-\phi)u_{1}}_{H^{s_{\tow}}}^{2}
                                  + \norm{\phi u_{2}}_{\dom}^{2} +
                                  \norm{\Ps u}_{\cY^{s_{\tow}-1}}^{2},
  \\
  \norm{u}_{\cX^{s_{\tow}^{*}}} ^{2} &=
                                       \norm{(1-\phi)u_{1}}_{H_{s_{\tow}^{*}}}^{2}
                                       + \norm{\phi u_{2}}_{L^{2}}^{2}
                                       + \norm{\Ps u}_{\cY^{s_{\tow}^{*}-1}}^{2},
\end{align*}
with the semiclassical analogues obtained in the same way as for the
$\cY$ spaces.

One of the main reasons for this setup is that the dual of
$\cY^{s_{\tow}-1}$ consists of those distributions of the form
$(1-\phi)u_{1} + \phi u_{2}$, where $u_{1} \in H^{s_{\tow}^{*}}$ and
$u_{2} \in L^{2}$.  Similarly, the dual of $\cY^{s^{*}-1}$ consists of
those distributions $(1-\phi)u_{1} + \phi u_{2}$ with
$u_{1} \in H^{s_{\tow}}$ and $u_{2} \in \dom$.  Moreover, because the
inclusions $\dom \hookrightarrow L^{2}$ and
$L^{2}\hookrightarrow\dom'$ are compact, the inclusions
$\cX^{s_{\tow}}\hookrightarrow \cY^{s_{\tow}-1}$ and
$\cX^{s_{\tow}^{*}}\hookrightarrow \cY^{s_{\tow}^{*}-1}$ are also
compact.

As our results in Section~\ref{sec:prop-sing} are stated entirely in
terms of estimates, it is unnecessary to define the wavefront set
associated to these spaces.

In Section~\ref{sec:proof-theor-refth} below, we also use
variable-order $\bl$-Sobolev spaces $\Hb^{s_{\tow}}$ \emph{not} based on $\dom$.
As $s_{\tow}$ is constant near $\pd X$, these spaces can be defined in
the standard way (see, e.g., \cite[Appendix A]{BVW1}).  We note that,
with our definitions of $s_{\tow}$ and $s_{\tow}^{*}$, we have the inclusions
\begin{align*}
  \cX^{s_{\tow}} &\hookrightarrow \Hb^{s_{\tow}} (X), &
                                                        \cY^{s_{\tow}-1}
  &\hookrightarrow \Hb^{s_{\tow}-1}(X), \\
  \cX^{s_{\tow}^{*}} &\hookrightarrow \Hb^{1-s_{\tow}}(X), &
                                                          \cY^{s_{\tow}^{*}}
  &\hookrightarrow \Hb^{-s_{\tow}}(X).
\end{align*}

\section{Propagation of singularities in the bulk}
\label{sec:prop-sing-bulk}

The aim of this section is to prove a regularity result for forward
solutions $u$ of $Lu \in C^{\infty}_{c}(M^{\circ})$.  In particular,
we establish that $u$ lies in a weighted $\Hbdst$ space and enjoys
additional regularity with respect to the $\Psib^{0}(M)$-module
\begin{equation*}
  \module = \{ A \in \Psib^{1}(M) \mid \sigma
  (A)\rvert_{\mathcal{R}^+} = 0\}.
\end{equation*}
The main result of this section is the following proposition:
\begin{proposition}
  \label{prop:output-of-bulk-prop}
  If $u \in \rho^{\ell}\domst$ satisfies $Lu \in
  C^{\infty}_{c}(M^{\circ})$ and $u \equiv 0$ for $t \ll 0$, then
  there are $s, \gamma \in \reals$ so that $s + \gamma < 1/2$ and $u
  \in \Hbdst^{s,\gamma}$.  Moreover, $u$ possesses module regularity
  with respect to this space, i.e., if $A_{1},\dots,A_{N}\in
  \module$, then $A_{1}\dots A_{N} u \in \Hbdst^{s,\gamma}$.
\end{proposition}

Away from the cone points and the future radial set, standard elliptic
regularity and hyperbolic propagation arguments apply to establish
$\Hbdst^{\bullet,\bullet}$ regularity of any order.  Our aim therefore
is to establish the proposition microlocally in these regions.  In
Section~\ref{sec:radial-set-bulk} we recall the propagation estimates
at the radial sets $\mathcal{R}^{\pm}$, while in
Section~\ref{sec:near-singular-points} we establish the necessary
estimates near the singularities.

\subsection{The radial set}
\label{sec:radial-set-bulk}

At $\mathcal{R}^{+}$ (i.e., at $N^{*}S_{+}$), the Hamilton vector
field of $L$ is radial and so we appeal to the radial point
propagation estimates of Vasy~\cite{vasy:microlocal}.\footnote{As we are
  working with the forward solution in the bulk, we have no need for
  the estimates at $\mathcal{R}^{-}$, though these estimates would of
  course be necessary to show that $\Box$ is Fredholm on appropriate
  spaces.}  Though we state the estimates with reference to the domain
$\domst$, this is immaterial as the estimates localize and the radial
sets are disjoint from the conic singularities.

\begin{proposition}[{cf.~\cite[Proposition 5.4]{BVW2}}]
  If $u \in \Hbdst^{-\infty,\ell}(M)$ for some $l$, $Lu \in
  \Hbdst^{m-1,l}$, and $u \in \Hbdst^{m,l}$ on a punctured
  neighborhood $U \setminus \partial \mathcal{R}^{+}$ of
  $\partial\mathcal{R}^{+}$ in $\Sbstar M$, then for $m' \leq m$ with
  $m' + l < 1/2$, we have $w \in \Hbdst^{m',l}(M)$ at $\partial
  \mathcal{R}^{+}$ and for $N \in \naturals$ with $m' + N \leq M$ and
  $A \in \module^{N}$, $Aw$ is in $\Hbdst^{m',l}$ at $\partial
  \mathcal{R}^{+}$.  
\end{proposition}

In particular, if $Lu \in \Hbdst^{\infty,l}$ and $u \in
\Hbdst^{\infty,\ell}$ on a punctured neighborhood of $\partial
\mathcal{R}^{+}$, then as long as $m' + l < 1/2$, $Au \in
\Hbdst^{m',l}$ at $\partial \mathcal{R}^{+}$ for $A \in \module^{N}$.
We remark that as $\pd \mathcal{R}^{+}$ is disjoint from $\cf$,
$\Hbdst^{m',l}$ regularity agrees with $\Hb^{m',l}$ regularity.

\subsection{Near the singular points}
\label{sec:near-singular-points}

For finite times, the work of Melrose--Wunsch~\cite{MW}
establishes the needed propagation results.  We therefore prove the
analogous statement near the intersection $\mf \cap \cf$.  Recall that
the compressed characteristic set is defined in Section~\ref{sec:operator-ps}.
\begin{proposition}
  \label{prop:propagation-bulk}
  If $u\in \rho^{\ell}\domst$ is the forward solution of $Lu =f$ for $f \in
  C^{\infty}_{c}(M^{\circ})$, then $\WFbdst^{m,\ell} u \subset
  \Sigmadot$.  For
  \begin{equation*}
    q_{0} = \{ (\rho = 0, x=0, z \in Z, \taub_{0} =\pm 1, \xib=0,
    \zetab=0)\} \subset \Sigmadot \cap \{ \rho =0\}
  \end{equation*}
  and let $U$ denote a neighborhood of $q_{0}\in \Sigmadot$.  If
  \begin{equation*}
    U \cap \{ \xib / \taub > 0\} \cap \WFbdst^{s,\ell}(u) = \emptyset,
  \end{equation*}
  then
  \begin{equation*}
    q_{0} \cap \WFbdst^{s,\ell}(u) = \emptyset.
  \end{equation*}
\end{proposition}
As the wavefront set is closed, this proposition yields regularity at
the outgoing points ($\xib/\taub < 0$) sufficiently near $q_{0}$.

The first statement (that the wavefront set lies in the characteristic
set) is the main result of Section~\ref{sec:elliptic-regularity-bulk},
while the diffractive theorem (the absence of ``incoming'' wavefront
set implies the absence of ``outgoing'' wavefront set) is proved in
Section~\ref{sec:hyperb-prop-bulk}. 

Throughout the rest of this section we use $Q_{j}$ to denote those
first-order conic differential operators not lying in $\Diffb^{1}$.
We set $Q_{0} = 1/x$, $Q_{1} = D_{x}$, and
$Q_{j} = \frac{1}{x} D_{z_{j}}$ for local coordinates
$z_{2},\dots, z_{n}$ on $Z$.  We further \emph{assume all
  pseudodifferential operators and distributions are localized to a
  region with $x\leq 1/4$.}  As mentioned above, we continue to abuse
notation by using the symbol $\Diffe$ to denote differential operators
that are edge-like at $\cf$ (i.e., in $x$) and otherwise $\bl$-like at
$\mf$ (i.e., in $\rho$).  We measure $L^{2}$ with respect to the
density for the conic-$\bl$-metric $\rho^{2}g$; in local coordinates
this has the form
\begin{equation*}
  \frac{x^{n-1}\sqrt{\met}}{\rho}\,d\rho\,dx\,dz.
\end{equation*}
With respect to this density, we observe that $L$ has the following form:
\begin{equation*}
  L = \left( \rho D_{\rho} + x D_{x}\right)^{*}\left( \rho D_{\rho} +
    x D_{x}\right) - D_{x}^{*}D_{x} - \left(
    \frac{1}{x}\grad_{z}\right)^{*}\left( \frac{1}{x}\grad_{z}\right)
  - \frac{n^{2}-1}{4}.
\end{equation*}

\subsubsection{Elliptic regularity}
\label{sec:elliptic-regularity-bulk}

The elliptic part of Proposition~\ref{prop:output-of-bulk-prop}
follows from a main lemma and the ellipticity of the operator away
from $\Sigmadot$.  Before stating the main lemma, we introduce for
brevity the shorthand notation
\begin{equation*}
  \abs{d_{x,z}f}^{2} = \abs{\pd[x]f}^{2} + \abs{x^{-1}\grad_{z}f}^{2},
\end{equation*}
where the latter norm is measured with respect to the metric $\met$ on
$Z$.  

The main estimate follows by pairing $Lv$ with $v$ for a family of $v$
and then integrating by parts; its proof is essentially identical to
the one given by Melrose--Vasy--Wunsch~\cite[Lemma 8.9]{MVW} with
a minor modification we will describe below.
\begin{lemma}[{cf.~\cite[Lemma 8.9]{MVW}}]
  \label{lemma:other-ell-bulk}
  Suppose that $K\subset U \subset \Sbstar M$ with $K$ compact and $U$
  open, and suppose further that $A_{r}$ constitute a bounded family
  of basic elements of $\Psibinf$ with $\WFb'(A_{r})\subset K$ in the
  sense of uniform wavefront sets of families, and $A_{r} \in
  \Psib^{s-1}$ for all $r\in (0,1)$.  There exist $G \in
  \Psib^{s-1/2}$ and $\tG\in \Psib^{s}$ with $\WFb'(G),
  \WFb'(\tG)\subset U$ and $C_{0} > 0$ so that for all $\epsilon > 0$,
  $r \in (0,1)$, and $u \in \domst$ with $\WFbdst'^{s-1/2}(u) \cap U =
  \emptyset$ and $\WFbdprime^{s}(Lu)\cap U = \emptyset$, we have
  \begin{align*}
    &\left|\int\left(\abs{d_{x,z}A_{r}u}^{2} +
    \frac{n^{2}-1}{4}\abs{A_{r}u}^{2} - \abs{(\rho \pd[\rho] + x
    \pd[x])A_{r}u}^{2}\right)\frac{x^{n-1}\sqrt{\met}}{\rho}\,d\rho\,dx\,dz\right|
    \\
    &\quad \leq \epsilon \left( \norm{d_{x,z}A_{r}u}_{L^{2}}^{2} +
      \norm{\rho\pd[\rho]A_{r}}_{L^{2}}^{2}\right) + C_{0} \left(
      \norm{u}_{\domst}^{2} + \norm{Gu}_{\domst}^{2} + \epsilon^{-1}
      \norm{Lu}_{\domst'}^{2} + \epsilon^{-1}\norm{\tG Lu}_{\domst'}^{2}\right).
  \end{align*}
\end{lemma}

After observing that for $v\in \domst$, 
\begin{equation*}
  \ang{Lv,v} = \norm{(\rho \pd[\rho] + x\pd[x])v}^{2} - \norm{\pd[x]
    v}^{2} - \norm{\frac{1}{x}\grad_{z}v}^{2} - \frac{n^{2}-1}{4}\norm{v}^{2},
\end{equation*}
the proof of Lemma~\ref{lemma:other-ell-bulk} is identical to its
counterpart in the work of Melrose--Vasy--Wunsch in \cite{MVW} with
$\pd[t]$ replaced by $\rho \pd[\rho] + x\pd[x]$.

At this stage, we record a corollary useful in the next subsection:
\begin{corollary}
  \label{cor:elliptic-control}
  Under the hypotheses of Lemma~\ref{lemma:other-ell-bulk}, we can
  estimate the domain norm of $A_{r}u$ by
  \begin{equation*}
    \norm{A_{r}u}_{\domst} \leq C \left( \norm{u}_{\domst} +
      \norm{Gu}_{\domst} + \norm{Lu}_{\domst'} + \norm{\tG
        Lu}_{\domst'} + \norm{(\rho \pd[\rho] + x\pd[x])A_{r}u}_{L^{2}}\right).
  \end{equation*}
\end{corollary}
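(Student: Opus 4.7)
The plan is to extract the bound directly from Lemma~\ref{lem:other-ell-bulk} by isolating $\Norm[L^{2}]{d_{x,z}A_{r}u}^{2}$ on one side, absorbing small-coefficient errors, and then using the $x \leq 1/4$ localization to pass from $(\rho\pd[\rho]+x\pd[x])A_{r}u$ to $\rho\pd[\rho]A_{r}u$. Recall from Lemma~\ref{lem:base-domain} and the discussion of $\domst$ above that
\begin{equation*}
  \Norm[\domst]{A_{r}u}^{2} \sim \Norm[L^{2}]{\rho\pd[\rho]A_{r}u}^{2} + \Norm[L^{2}]{d_{x,z}A_{r}u}^{2} + \Norm[L^{2}]{A_{r}u}^{2},
\end{equation*}
so it suffices to bound each of these three pieces by the right-hand side claimed in the corollary.

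Write $R^{2} = \Norm[\domst]{u}^{2} + \Norm[\domst]{Gu}^{2} + \epsilon^{-1}\Norm[\domst']{Lu}^{2} + \Norm[\domst']{\tilde{G}Lu}^{2}$ for brevity. Since $\tfrac{n^{2}-1}{4}\Norm[L^{2}]{A_{r}u}^{2} \geq 0$, the inequality of Lemma~\ref{lem:other-ell-bulk} gives the one-sided bound
\begin{equation*}
  \Norm[L^{2}]{d_{x,z}A_{r}u}^{2} \leq \Norm[L^{2}]{(\rho\pd[\rho]+x\pd[x])A_{r}u}^{2} + \epsilon\bigl(\Norm[L^{2}]{d_{x,z}A_{r}u}^{2} + \Norm[L^{2}]{\rho\pd[\rho]A_{r}u}^{2}\bigr) + C_{0}R^{2}.
\end{equation*}
To deal with the second error term, I write $\rho\pd[\rho] = (\rho\pd[\rho]+x\pd[x]) - x\pd[x]$. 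Because we are localized to the region $x \leq 1/4$, a triangle-inequality computation yields
\begin{equation*}
  \Norm[L^{2}]{\rho\pd[\rho]A_{r}u}^{2} \leq 2\Norm[L^{2}]{(\rho\pd[\rho]+x\pd[x])A_{r}u}^{2} + 2\Norm[L^{2}]{x\pd[x]A_{r}u}^{2} \leq 2\Norm[L^{2}]{(\rho\pd[\rho]+x\pd[x])A_{r}u}^{2} + \tfrac{1}{8}\Norm[L^{2}]{d_{x,z}A_{r}u}^{2}.
\end{equation*}

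Substituting this into the previous display and choosing $\epsilon$ small enough (say $\epsilon(1+\tfrac{1}{8}) < \tfrac{1}{2}$), the $\Norm[L^{2}]{d_{x,z}A_{r}u}^{2}$ terms on the right absorb into the left, producing $\Norm[L^{2}]{d_{x,z}A_{r}u}^{2} \lesssim \Norm[L^{2}]{(\rho\pd[\rho]+x\pd[x])A_{r}u}^{2} + R^{2}$, and then feeding this back into the triangle-inequality estimate gives the same control for $\Norm[L^{2}]{\rho\pd[\rho]A_{r}u}^{2}$. The remaining piece $\Norm[L^{2}]{A_{r}u}^{2}$ is handled by standard microlocal elliptic regularity: since $\mathcal{A}$ is bounded in $\Psibinf^{s}$ with $\WFb'(\mathcal{A}) \subset U$ and $G \in \Psib^{s-1/2}$ is elliptic on $U$, we obtain $\Norm[L^{2}]{A_{r}u} \lesssim \Norm[L^{2}]{Gu} + \Norm[L^{2}]{u} \lesssim \Norm[\domst]{u} + \Norm[\domst]{Gu}$ uniformly in $r$. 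Combining the three bounds and taking square roots yields the corollary. The only real subtlety is the absorption step in the middle, which works precisely because the $x \leq 1/4$ localization forces $\Norm[L^{2}]{x\pd[x]A_{r}u}$ to be strictly smaller than $\Norm[L^{2}]{d_{x,z}A_{r}u}$; no new microlocal input beyond Lemma~\ref{lem:other-ell-bulk} is needed.
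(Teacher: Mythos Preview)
Your overall strategy is exactly the one the paper has in mind: extract the one-sided bound from Lemma~\ref{lem:other-ell-bulk}, use the localization $x\leq 1/4$ to trade $\rho\pd[\rho]$ for $(\rho\pd[\rho]+x\pd[x])$ modulo a small multiple of $\Norm[L^{2}]{d_{x,z}A_{r}u}$, and absorb. The first two pieces of the domain norm are handled correctly.

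The gap is in your treatment of $\Norm[L^{2}]{A_{r}u}$. Your appeal to microlocal elliptic regularity does not work as written: the operator $G$ produced by Lemma~\ref{lem:other-ell-bulk} is only of order $s-1/2$, whereas the family $\{A_{r}\}$ is bounded in $\Psibinf^{s}$, so ellipticity of $G$ on $U$ (which in any case is not asserted in the lemma) would not suffice to bound $\Norm[L^{2}]{A_{r}u}$ uniformly in $r$. The fix is immediate and in fact simpler than what you wrote: do not discard the term $\tfrac{n^{2}-1}{4}\Norm[L^{2}]{A_{r}u}^{2}$ when passing to the one-sided bound. Since $n\geq 2$ this coefficient is strictly positive, so the inequality
\[
  \Norm[L^{2}]{d_{x,z}A_{r}u}^{2} + \tfrac{n^{2}-1}{4}\Norm[L^{2}]{A_{r}u}^{2}
  \leq \Norm[L^{2}]{(\rho\pd[\rho]+x\pd[x])A_{r}u}^{2}
  + \epsilon\bigl(\Norm[L^{2}]{d_{x,z}A_{r}u}^{2}+\Norm[L^{2}]{\rho\pd[\rho]A_{r}u}^{2}\bigr) + C_{0}R^{2}
\]
already controls $\Norm[L^{2}]{A_{r}u}^{2}$ together with $\Norm[L^{2}]{d_{x,z}A_{r}u}^{2}$, and the rest of your absorption argument goes through unchanged. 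With this correction the proof is complete and matches the paper's intended (unwritten) argument.
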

Corollary~\ref{cor:elliptic-control} allows us to replace factors of
$Q_{j}$ with the $\bl$-differential operator $\rho \pd[\rho] +
x\pd[x]$ at the cost of terms already on the right side of
Lemma~\ref{lemma:other-ell-bulk}.  In other words, we can control the
$\domst$ norm of $Au$ by the $\Hb^{1}$ norm of $Au$ and the other
terms on the right.

We conclude this section with the proof of the first part of
Proposition~\ref{prop:propagation-bulk}, namely that
$\WFbdst^{m,\ell}u \subset \Sigmadot$.  We employ a simpler version of
the argument used by Melrose--Vasy--Wunsch~\cite[Proposition
8.10]{MVW}.

Suppose $q \in \Sbstar M \setminus \Sigmadot$.  For finite times
(i.e., $\rho > 0$, the theorem of Melrose--Wunsch in \cite{MW} applies
and so we may assume $q$ projects to $\rho = 0$.  Likewise, standard
elliptic arguments apply away from $x=0$ and so we may assume $q$
projects to $x=0$, so that
\begin{equation*}
  q = (\rho = 0, x = 0, z \in Z, \taub, \xib, \zetab),
\end{equation*}
where $\xib^{2} + \abs{\zetab}^{2} > 0$.  We assume inductively that
$q\notin \WFbdst^{s-1/2,\ell}(u)$ and aim to show that $q\notin
\WFbdst^{s,\ell}(u)$.  Let $A\in \Psib^{s,\ell}$ be a basic operator
so that
\begin{enumerate}
\item $\ds \WFb'(A) \cap \WFbdst^{s-1/2,\ell}(u) =\emptyset$, and 
\item $\ds \WFb'(A)$ is a subset of a small neighborhood $U$ of $q$ on
  which $\hat{\xib}^{2} + \abs{\hat{\zetab}}^{2} > c > 0$.
\end{enumerate}

We now introduce $\Lambda_{r} \in \Psib^{-2}$ for $r > 0$ with symbol
$(1+r(\taub^{2} + \xib^{2} + \abs{\zetab}^{2}))^{-1}$ so that
$\Lambda_{r} \in \Psibinf^{0}$ uniformly and $\Lambda_{r} \to \Id$ as
$r\to 0$.  We set $A_{r} = \Lambda_{r}A$ so that for $r > 0$, we have
\begin{equation*}
  \sigma (A_{r}) = \frac{a}{1 + r(\taub^{2} + \xib^{2} + \abs{\zetab}^{2})},
\end{equation*}
where $a$ is the symbol of $A$ and $A_{r}\rho^{\ell}$ and
$\rho^{-\ell}u$ satisfy the hypotheses of
Lemma~\ref{lemma:other-ell-bulk}.

By the Lemma~\ref{lemma:other-ell-bulk}, the difference
\begin{equation*}
  \norm{\pd[x]A_{r}u}^{2} + \norm{\frac{1}{x}\grad_{z}A_{r}u}^{2} +
  \frac{n^{2}-1}{4}\norm{A_{r}u}^{2} - \norm{(\rho \pd[\rho] +
    x\pd[x])A_{r}u}^{2} - \epsilon \norm{d_{(x,z)}A_{r}u}^{2}
\end{equation*}
is uniformly bounded in $r$.  Writing this quantity as
\begin{equation*}
  \frac{1-\epsilon}{2} \left( \norm{\pd[x]A_{r}u}^{2} +
    \norm{\frac{1}{x}\grad_{z}A_{r}u}\right) + I,
\end{equation*}
we now show $I \geq 0$.

Indeed, we observe that if $\delta > 0$ is sufficiently small, then
the operator $B \in \Psib^{1}$ with principal symbol given by 
\begin{equation*}
  \sigmab(B) = \left( \frac{1-\epsilon}{\delta^{2}}\left( \xib^{2} +
      \zetab^{2}\right) - (1+\epsilon) \left( \taub + \xib\right)^{2}\right)^{1/2}
\end{equation*}
is elliptic on $U$.

Moreover, if $A$ is supported in $\{ x < \delta\}$, then
\begin{align*}
  \norm{\pd[x]A_{r}u}^{2} &\geq
                            \frac{1}{\delta^{2}}\norm{x\pd[x]A_{r}u}^{2}
                            , \\
  \norm{\frac{1}{x}\grad_{z}A_{r}u}^{2} & \geq \frac{1}{\delta^{2}}\norm{\grad_{z}A_{r}u}^{2},
\end{align*}
and so by shrinking the support of $A$, $I$ is bounded below by
\begin{align*}
  I &\geq \frac{1-\epsilon}{\delta^{2}}\left( \norm{x\pd[x]A_{r}u}^{2}
      + \norm{\frac{1}{x}\grad_{z}A_{r}u}^{2}\right) - (1 + \epsilon)
      \norm{(\rho \pd[\rho] + x \pd[x])A_{r}u}^{2} \\
 &  = \norm{BA_{r}u}^{2} + \ang{FA_{r}u, A_{r}u},
\end{align*}
where $B, F\in \Psib^{1}$ and $B$ has principal symbol given above.
As $F$ is order $1$ and $Au\in \domst$, the second term is
uniformly bounded in $r$.

As $I$ is bounded below, we deduce that
\begin{equation*}
  \frac{1-\epsilon}{2} \left( \norm{\pd[x]A_{r}u}^{2}+ \norm{\frac{1}{x}\grad_{z}A_{r}u}^{2}\right)
\end{equation*}
is uniformly bounded in $r$.  Extracting weak limits shows that $Au
\in \domst$ and proves the first part of the proposition.

\subsubsection{Hyperbolic propagation}
\label{sec:hyperb-prop-bulk}

The aim of this subsection is to complete the proof of
Proposition~\ref{prop:propagation-bulk}.  We proceed by a positive
commutator estimate; the positivity essentially stems from the
commutator of $L$ with $x\pd[x]$.  We treat the case of $\taub_{0} >
0$ here; the other case follows by flipping the sign of $\xib$.  Indeed, for
\begin{equation*}
\hat{\xib} = \frac{1}{\taub}\xib, \quad p_{0} = \sigmab(L),
\end{equation*}
the Hamilton vector field of $p_{0}$ satisfies
\begin{equation*}
  \frac{1}{2} H_{p_{0}} ( - \hat{\xib}) =
  \frac{1}{x^{2}}\left( \xib^{2} + \abs{\zetab}^{2}\right).
\end{equation*}

As in Vasy~\cite{Vasy08}, we define two auxiliary functions
\begin{equation*}
  \omega = x^{2}+ \rho^{2},
\end{equation*}
and
\begin{equation*}
  \phi = - \hat{\xib} + \frac{1}{\beta^{2}}\delta \omega,
\end{equation*}
where $\beta > 0$ is a parameter to be chosen.  The first function
acts as a localizer near the corner, while the second function
provides the positivity in the estimate.  As long as
$\omega < \delta$, we can bound
\begin{equation*}
  \frac{1}{\tau}H_{p_{0}}\omega = \bo \left( \sqrt{\omega} \left(
      \frac{\hat{\xib}^{2}}{x^{2}} +
      \frac{\abs{\hat{\zetab}}^{2}}{x^{2}} + 1 \right)^{1/2}\right).
\end{equation*}

We now fix three smooth functions of one variable $\chi_{0}$,
$\chi_{1}$, and $\chi_{2}$.  We demand that $\chi(s) = \exp (-1/s)$
for $s>0$ so that $\chi_{0}'(s) = s^{-2}\chi_{0}(s)$.  We take
$\chi_{1}$ supported in $[0,\infty)$ to be equal to $1$ on
$[1,\infty)$ and so that $\chi_{1}' \geq 0$ is compactly supported in
$(0,1)$.  Finally, for a given parameter $c_{1}$, we take $\chi_{2} \in
C^{\infty}_{c}(\reals)$ supported in $[-2c_{1},2c_{1}]$ and
identically $1$ on $[-c_{1},c_{1}]$.  We insist that all cut-off
functions and their derivatives have smooth square roots up to sign.

With $\chi_{\bullet}$ in hand, we finally define the basic test symbol
$a$ by
\begin{equation*}
  a = \chi_{0} \left( 1 - \frac{\phi}{\delta}\right) \chi_{1} \left(
    \frac{-\hat{\xib}}{\delta} + 1 \right) \chi_{2}\left(
    \hat{\xib}^{2} + \abs{\hat{\zetab}}^{2}\right),
\end{equation*}
where $\delta > 0$ is another parameter to be chosen.

As in Melrose--Vasy--Wunsch~\cite{MVW} and
Gannot--Wunsch~\cite{GW}, we can arrange that $a$ is well-localized
near $q_{0}= \{ (\rho = 0, x=0, z \in Z, \taub=\pm 1, \xib=0, \zetab=0
)\}$.
\begin{lemma}
  Given any neighborhood $U$ of $q_{0}$ and any $\beta > 0$, there are
  $\delta_{0}> 0$ and $c_{1}> 0$ so that $a$ is supported in $U$ for
  all $0 < \delta < \delta_{0}$.
\end{lemma}

We now choose a basic operator $B \in \Psib^{1/2}$ with
\begin{equation*}
  b = \sigmab(B) = \tau^{1/2}\delta^{-1/2} (\chi_{0}\chi_{0}')^{1/2}\chi_{1}\chi_{2},
\end{equation*}
so that, when taking derivatives of $a$, those falling on $\chi_{0}$
yield factors of $b^{2}$.  We further choose $C \in \Psib^{0}$ with
principal symbol
\begin{equation*}
  \sigmab(C) = \frac{\sqrt{2}}{\tau}\abs{\tau + \xib}\psi,
\end{equation*}
where $\psi \in S^{0}(\Tbstar M)$ is identically $1$ on the support of
the symbol of $B$.

We can now compute the commutator of $A^{*}A$ and $L$:
\begin{lemma}[{cf.~\cite[Lemma 9.6 and Theorem 9.7]{MVW}}]
  \label{lemma:commutator-bulk-calc}
  There is a $\delta_{0} > 0$ so that for all $0 < \delta <
  \delta_{0}$, the commutator of $L$ and $A^{*}A$ is given by
  \begin{equation*}
    i \left[ A^{*}A, L\right] = R' L + B^{*}\left( C^{*}C + R_{0} +
      \sum_{j} R_{j}Q_{j} + \sum_{j,k}Q_{j}^{*}R_{jk}Q_{k}\right)B +
    R'' + E' + E'',
  \end{equation*}
  where the terms enjoy the following properties:
  \begin{itemize}
  \item all factors are microlocalized near $q_{0}$, 
  \item $R_{0} \in \Psib^{0}$, $R', R_{j}\in \Psib^{-1}$, $R_{jk}\in
    \Psib^{-2}$,
  \item $E'$, $E'' \in x^{-2}\Diffe^{2} \Psib^{-1}$, $R'' \in
    x^{-2}\Diffe^{2}\Psib^{-2}$, 
  \item the symbols $r_{0}$, $r_{j}$< and $r_{jk}$ of $R_{0}$,
    $R_{j}$, and $R_{jk}$ are supported in $\{ \omega \leq 9
    \delta^{2}\beta\}$,
  \item the symbols $r_{0}$, $\taub r_{j}$, and $\taub^{2}r_{jk}$ are
    bounded by both
    \begin{equation*}
      c \left( 1 + \frac{1}{\beta^{2}\delta}\right), \quad
      \text{and}\quad c \left( \delta \beta + \beta^{-1}\right),
    \end{equation*}
  \item $\ds\WFb'(E') \subset \xib^{-1}((0,\infty))\cap U$, and
  \item $\ds \WFb'(E'')\cap \Sigmadot = \emptyset$.
  \end{itemize}
\end{lemma}

\begin{proof}
  The principal symbol of the commutator is given by the action of the
  Hamilton vector field of $p_{0}$ on $a^{2}$; the choice of the
  function $\chi_{0}$ ensures that when derivatives fall on this term,
  we obtain the contributions sandwiched between $B^{*}$ and $B$.  The
  positive term arises from the near homogeneity (in $x$) of $L$.
  Indeed, we exchange the leading term in $a\pd[\xi]a$ with $L$,
  leaving the symbol $\abs{\taub + \xib}^{2}/\taub^{2}$ and obtaining
  the $C^{*}C$ term as well as the $R'L$ term.

  Derivatives falling on $\chi_{1}$ give contributions to the $E'$
  term; those falling on $\chi_{2}$ provide contributions to $E''$.
  Commuting the $Q_{j}$ through $B$ also leads to contributions to
  $E'$ and $E''$.

  The $R''$ term arises as the computation occurs only at the
  principal symbol level; this term is also used to further absorb
  other lower order commutation terms.
\end{proof}

We also observe that we can estimate the remainder terms via the
symbol calculus:
\begin{lemma}
  \label{lemma:bulk-remainder-terms}
  Given $\epsilon > 0$, there is a $\delta_{1}\in (0,\delta_{0})$ so
  that for all $0 < \delta < \delta _{1}$, and all $v \in \domst$,
  \begin{align*}
    &\abs{\ang{R_{0}Bv,Bv}} + \sum_{j}\abs{\ang{R_{j}Q_{j}Bv,Bv}} +
      \sum_{j,k}\abs{\ang{Q_{j}^{*}R_{jk}Q_{k}Bv,Bv}} \\
    &\quad \leq \epsilon\norm{Bv}^{2} + C\norm{R'Bv}^{2} + C\left(
      \norm{u}_{\domst}^{2} + \norm{Gu}_{\domst}^{2} +
      \norm{Lu}_{\domst'}^{2} + \norm{\tG L u}_{\domst '}^{2}\right)
  \end{align*}
  for some $R' \in \Psib^{-1}$.
\end{lemma}

\begin{proof}
  The lemma follows from the symbol estimates of
  Lemma~\ref{lemma:commutator-bulk-calc} and
  Corollary~\ref{cor:elliptic-control}, together with the observation
  that for $A\in \Psib^{0}$, there is an $A' \in \Psib^{-1}$ so that
  for all $u \in L^{2}$
  \begin{equation*}
    \norm{Au} \leq \sup \abs{\sigmab(A)}\norm{v} + C\norm{A'v}.
  \end{equation*}
\end{proof}

We now finish the proof of Proposition~\ref{prop:propagation-bulk}.
\begin{proof}[Proof of Proposition~\ref{prop:propagation-bulk}]
  We first consider the case of $\ell = 0$.  Suppose $s < \sup \{ s' :
  q_{0} \notin \WFbdst^{s'}u\}$; shrinking $U$ if necessary we may
  assume $U \cap \WFbdst^{s}(u) =\emptyset$.  Our aim is to show $q
  \notin \WFbdst^{s+1/2}(u)$.

  As we measure regularity with respect to $\domst$, we know that if
  $B\in \Psib^{s}$ localizes to $U$, then $Bu$, $Q_{i}Bu$, and
  $\rho\pd[\rho]Bu$ all lie in $L^{2}$.  By the hypothesis and
  Corollary~\ref{cor:elliptic-control}, it suffices to control $\rho
  \pd[\rho] Bu$ at $q_{0}$.  In particular, it suffices to find a
  $\bl$-pseudodifferential operator of order $s+3/2$ that is elliptic
  at $q_{0}$ and for which $Bu\in L^{2}$.  (This explains the apparent
  shift in order by one below.)

  Let $A$, $B$, and $C$ be as in the discussion preceding
  Lemma~\ref{lemma:commutator-bulk-calc} and let $\Lambda_{r}$ be a
  quantization of
  \begin{equation*}
    \taub^{s+1}\left( 1 + r \taub^{2}\right)^{-(s+1)/2}, \quad r \in [0,1],
  \end{equation*}
  and set $A_{r} = A\Lambda_{r}\in \Psib^{0}$ for $r > 0$ and $A_{r}$
  is uniformly bounded in $\Psibinf^{s+1}$.  We may further arrange
  that $[L, \Lambda_{r}] =0$.

  By the calculation in Lemma~\ref{lemma:commutator-bulk-calc}, we may
  write
  \begin{align}
    \label{eq:bulk-full-terms}
    i\ang{[A_{r}^{*}A^{*}, L]u, u} &= \norm{CB\Lambda_{r}u}^{2} +
                                     \ang{R' L\Lambda_{r}u,
                                     \Lambda_{r}u} +
                                     \ang{R_{0}B\Lambda_{r}u,
                                     \Lambda_{r}u} \\
                                   &\quad +
                                     \sum_{j}\ang{R_{j}Q_{j}B\Lambda_{r}u,
                                     B\Lambda_{r}u} +
                                     \sum_{j,k}\ang{R_{jk}Q_{j}B\Lambda_{r}u,
                                     Q_{k}B\Lambda_{r}u} \notag\\
                                   &\quad + \ang{R'' \Lambda_{r}u,
                                     \Lambda_{r}u} +
                                     \ang{(E'+E'')\Lambda_{r}u, \Lambda_{r}u}.\notag
  \end{align}
  As $u \in \domst$, the pairing on the left is well-defined:
  \begin{align*}
    \ang{[A_{r}^{*}A, L]u, u} &= \ang{A_{r}Lu, A_{r}u} - \ang{A_{r}u, A_{r}Lu}.
  \end{align*}
  As $Lu$ is residual, these terms are uniformly bounded in $r$ and so
  we may estimate $\norm{CB\Lambda_{r}u}^{2}$ by the other terms in
  equation~\eqref{eq:bulk-full-terms}.  The second term is uniformly
  bounded because $Lu$ is residual, while the next three terms are
  estimated by Lemma~\ref{lemma:bulk-remainder-terms}.  The $R''$ term
  is bounded by the regularity hypothesis of $u$ on $U$, while the
  $E''$ term is bounded by elliptic regularity.  Finally, the $E'$ term
  is bounded by the hypothesis of the theorem.  We can therefore find
  a constant $\mathcal{C}$ independent of $r$ so that
  \begin{equation*}
    \norm{CB\Lambda_{r}u}^{2}\leq \mathcal{C} +
    \epsilon\norm{B\Lambda_{r}u}^{2} + \mathcal{C}\left(
      \norm{R'Bv}^{2} + \norm{u}_{\domst}^{2} + \norm{Gu}_{\domst}^{2}
      + \norm{Lu}_{\domst'}^{2} + \norm{\tG Lu}_{\domst'}^{2}\right),
  \end{equation*}
  where $G\in \Psib^{s+1/2}$, $\tG \in \Psib^{s+1}$ are supported in
  $U$.  An application of the symbol calculus shows that
  $\norm{CB\Lambda_{r}u}$ (and the rest of the right side) controls
  $\norm{B\Lambda_{r}u}$.  The other terms on the right are uniformly
  bounded by the assumed regularity of $u$, so we can extract a
  subsequence and conclude that $B\Lambda_{0}u \in L^{2}$, so that
  $q_{0}\notin \WFbdst^{s+1/2}(u)$.  By iteratively shrinking the
  neighborhoods $U$, one can then show that in fact
  $q_{0}\notin \WFbdst^{\infty}(u)$.

  Finally, we now suppose that $\ell \neq 0$.  As $Lu \in
  C^{\infty}_{c}(M^{\circ})$, we can apply the above argument to $v =
  \rho^{\ell}u$ and $\tilde{L} = \rho^{\ell}L \rho^{-\ell}$.  As $L$
  and $\tilde{L}$ differ only by an element of $\Diffb^{1}$, the same
  proof applies to $v$.
\end{proof}

\section{The boundary operator $\Ps$}
\label{sec:prop-sing}

The aim of this section is to establish the mapping properties of
$\Ps$ (recall that $\Lambda^{\pm}$ are the radial sets for $\Ps$ and
are the fiber infinities of $N^{*}S_{\pm}$):
\begin{proposition}
  \label{prop:mapping-props}
  The family $\Ps$ has the following mapping properties:
  \begin{enumerate}
  \item $\ds \Ps : \cX^{s_{\tow}}\to \cY^{s_{\tow}-1}$ and $\ds\Ps^{*} :
    \cX^{s_{\away}^{*}} \to \cY^{s_{\away}^{*}-1}$ are Fredholm.
  \item The operators $\Ps$ form a holomorphic Fredholm family on
    these spaces in
    \begin{equation*}
      \complexes_{s_{+}, s_{-}} = \left\{ \sigma \in \complexes \mid
        s_{+} < \frac{1}{2} + \Im \sigma < s_{-}\right\},
    \end{equation*}
    with $s_{\tow}|_{\Lambda^{\pm} }= s_{\pm}$.  The formal adjoint
    $\Ps^{*}$ is antiholomorphic in the same region.
  \item The inverse $\Ps^{-1}$ has only finitely many poles in each
    strip $a< \Im \sigma < b$.
  \item For all $a$ and $b$, there is a constant $C$ so that
    \begin{equation}
      \label{eq:semiclass-est}
      \norm{\Ps^{-1}}_{\cY^{s_{\tow}-1}_{\abs{\sigma}^{-1}} \to
        \cX^{s_{\tow}}_{\abs{\sigma}}}\leq C \langle \Re \sigma \rangle^{-1}
    \end{equation}
    on $a < \Im \sigma < b$, $\abs{\Re \sigma} > C$, with a similar
    estimate holding for $(\Ps^{*})^{-1}$.
  \item The set of poles of $\Ps^{-1}$ (and $(\Ps^{*})^{-1}$) is
    independent of the choice of $s_{\tow}$. 
  \end{enumerate}
\end{proposition}

The first two parts of Proposition~\ref{prop:mapping-props} follow
from a sequence of propagation estimates; the second two parts follow
from semiclassical analogues of the same sorts of estimates.  For the
Fredholm statement, we propagate regularity \emph{out of} $S_{-}$ via
radial point estimates (as the $\cX$ spaces are more regular than the
threshold there), then rely on standard hyperbolic propagation
estimates to carry this regularity to a neighborhood of $S_{+}$, where
we then finish the propagation argument with below-threshold radial
point estimates.  The regularity in $C_{\pm}$ is treated by means of
the elliptic theory on cones, as $\Ps$ is classically elliptic there.
In the semiclassical case, however, the semiclassical characteristic
set of $\Ps$ extends into $C_{\pm}$ and we establish a semiclassical
diffractive estimate to carry the regularity of the solution through
the singularity of the operator.

Many of the microlocal estimates employed to establish
Proposition~\ref{prop:mapping-props} are already in the literature;
the main missing components are the Fredholm statement
(Section~\ref{sec:fredholm-property}) and the semiclassical
propagation estimate for $\Ps$ near the cone point
(Sections~\ref{sec:ellipt-regul-near} and~\ref{sec:hyperb-prop}).  The
last part of Proposition~\ref{prop:mapping-props} follows from
standard arguments in the resonances literature.

\subsection{The Fredholm property}
\label{sec:fredholm-property}

We first show that $\Ps$ is Fredholm on the desired spaces (parts 1
and 2 of Proposition~\ref{prop:mapping-props}).  In particular, we
prove the following proposition:
\begin{proposition}
  \label{prop:Fredholm}
  Given $s_{\pm}$ and $s_{\tow/\away} |_{\Lambda^{\pm}} = s_{\pm}$,
  there is

  \begin{equation*}
    \norm{u}_{\cX^{s_{\tow}}} \leq C\left( \norm{\Ps
        u}_{\cY^{s_{\tow}}-1} + \norm{u}_{\Hbdst^{-N}}\right).
  \end{equation*}
  
\end{proposition}

Away from the radial sets and the conic singularity $\mf\cap \cf$,
standard elliptic regularity and hyperbolic propagation arguments can
be pieced together.  Near the singularity at the poles, we appeal to
the following elementary lemma, which follows essentially immediately
after integrating by parts:
\begin{lemma}
  Fix $\chi \in \CI(\mf)$ supported in $\{ x < 1/4\}$.  For any $N$,
  there is a constant $C$ so that
  \begin{equation*}
    \norm{\chi u}_{\dom} \leq C \left( \norm{\Ps (\chi u)}_{L^{2}} +
      \norm{\chi u}_{L^{2}}\right).
  \end{equation*}
  In particular, for all $s$ and all $N$, we may estimate
  \begin{equation*}
    \norm{\chi u}_{\cX^{s_{\tow}}} \leq C \left( \norm{\Ps (\chi
        u)}_{\cY^{s_{\tow}-1}} + \norm{\chi u}_{\cY^{s_{\tow}-1}}\right).
  \end{equation*}
\end{lemma}

The same argument provides a similar estimate for $\Ps^{*}$ in the
appropriate (dual) spaces.

Near the radial sets $\Lambda^{\pm}$, the radial point estimates of
Vasy~\cite{vasy:microlocal} (building on work of Melrose~\cite{Melrose94}) apply without change:
\begin{lemma}[{\cite[Propositions~2.3 and~2.4]{vasy:microlocal}}]
  \label{lem:radial-points-bdry}
  For all $N$ and for $s_{0} > m > \frac{1}{2} + \Im \sigma$, and for
  all $A, B, G\in \Psib^{0}(\mf)$ supported near $\Lambda^{-}$ with
  $A, G$ elliptic at $\Lambda^{-}$ and so that all bicharacteristics
  from the microsupport of $B$ tend to $\Lambda^{-}$ in one direction
  while remaining in the elliptic set of $G$, we have
  \begin{equation*}
    \text{If }Au \in H^{m} \text{ then }\norm{Bu}_{H^{s_{0}}} \leq
    C\left( \norm{G \Ps u}_{H^{s_{0}-1}} + \norm{u}_{\Hbd^{-N}}\right).
  \end{equation*}
  
  For $s_{0} < \frac{1}{2} + \Im \sigma$ and $A,B,G\in \Psib^{0}(\mf)$
  supported near $\Lambda^{+}$ with $B,G$ elliptic at $\Lambda^{+}$ so
  that all bicharacteristics from $\WF'(B) \setminus \Lambda^{+}$
  reach the microsupport of $A$ in one direction while remaining in
  the elliptic set of $G$, we have
  \begin{equation*}
    \norm{Bu}_{H^{s_{0}}} \leq C \left( \norm{G\Ps u}_{H^{s_{0}-1}} +
      \norm{Au}_{H^{s_{0}}} + \norm{u}_{H^{-N}}\right).
  \end{equation*}
\end{lemma}

An analogous theorem holds for $\Ps^{*}$ with $\sigma$ replaced by its
complex conjugate and the direction of propagation reversed (so that
the roles of $\Lambda^{\pm}$ are exchanged).

Taking microlocal partitions of unity as appropriate, we therefore
have the two estimates
\begin{align*}
  \norm{u}_{\cX ^{s_{\tow}}} &\leq C \left( \norm{\Ps
                               u}_{\cY^{s_{\tow}-1}} + \norm{u}_{\cY^{s_{\tow}-1}}\right),\\
  \norm{u}_{\cX^{s_{\tow}^{*}}} &\leq C\left(
                               \norm{\Ps^{*}u}_{\cY^{s_{\tow}^{*}-1}} + \norm{u}_{\cY^{s_{\tow}^{*}-1}}\right).
\end{align*}
As the inclusions $\cX^{s_{\tow}}\hookrightarrow \cY^{s_{\tow}-1}$ and
$\cX^{s_{\tow}^{*}}\hookrightarrow \cY^{s_{\tow}^{*}-1}$ are compact,
the operators $\Ps$ and $\Ps^{*}$ are Fredholm in the stated spaces,
proving the first part of the theorem.  The second part of the theorem
follows from the facts that the coefficients of $\Ps$ are holomorphic
and that $s_{\tow}$ may be chosen to satisfy the desired properties
for all $\sigma$ in such a strip.

\subsection{Semiclassical estimates}
\label{sec:propagation-at-conic}

The third part of Proposition~\ref{prop:Fredholm} follows from the
fourth; this rest of this section is devoted to proving the estimate
there.

As the estimate~\eqref{eq:semiclass-est} is a semiclassical hyperbolic
estimate, we work semiclassically with $h = \abs{\sigma}^{-1}$ as our
semiclassical parameter and $\Ph = h^{2} \Ps$.
In these terms, the estimate~\eqref{eq:semiclass-est} is immediately
implied by an estimate of the form
\begin{equation}
  \label{eq:semiclass-est-2}
  \norm{u}_{\cX_{h}^{s_{\tow}}} \leq \frac{C}{h}\norm{\Ph
    u}_{\cY_{h}^{s_{\tow}-1}} + Ch \norm{u}_{\cX_{h}^{s_{\tow}}}
\end{equation}
for some $N$, together with an analogous estimate for $\Ph^{*}$ on the
appropriate spaces.  Away from $\Lambda^{\pm}$ and from $\{ x=0\}$,
the microlocal version of the estimate follows from standard elliptic
regularity and hyperbolic propagation estimates.

\subsubsection{The radial set}
\label{sec:radial-set}

Near $\Lambda^{\pm}$, the estimate follows from a semiclassical
version of radial propagation estimates as in earlier
work~\cite{BVW1, BVW2, vasy:microlocal}.

\begin{proposition}[{cf.~\cite[Propositions 2.8 and 2.9]{vasy:microlocal}}]
  \label{prop:semicl-radial-est}
  For $s|_{\Lambda^{-}} > m > \frac{1}{2} + \Im \sigma$ and $A,B,G\in
  \Psibh^{0}$ supported near $\Lambda^{-}$ with $A,G$ elliptic at
  $\Lambda^{-}$ and so that semiclassical bicharacteristics from the
  microsupport of $B$ tend to $\Lambda^{-}$ in one direction with
  closure in the elliptic set of $G$, we have
  \begin{equation*}
    \text{If }Au \in H^{m}, \text{ then }
    \norm{Bu}_{\cX_{h}^{s^{\tow}}}\leq \frac{C}{h}\norm{G\Ph
      u}_{\cY_{h}^{s_{\tow}-1}} + Ch \norm{u}_{\cY^{s_{\tow}-1}}.
  \end{equation*}

  For $s|_{\Lambda^{+}} < \frac{1}{2} + \Im \sigma$, and for all
  $A,B,G\in \Psibh^{0}$ supported near $\Lambda^{+}$ with $B,G$
  elliptic at $\Lambda^{+}$ and so that semiclassical
  bicharacteristics from $\WFbh'(B) \setminus \Lambda^{+}$ reach the
  microsupport of $A$ in one direction while remaining in the elliptic
  set of $G$, we have
  \begin{equation*}
    \norm{Bu}_{\cX_{h}^{s_{\tow}}} \leq \frac{C}{h}\norm{G\Ph
      u}_{\cY_{h}^{s_{\tow}-1}} + C \norm{Au}_{\cX_{h}^{s_{\tow}}} +
    Ch \norm{u}_{\cY^{s_{\tow}-1}}.
  \end{equation*}
\end{proposition}
Analogous estimates hold for $\Ph^{*}$ on the dual spaces as well
(with the roles of $\Lambda^{\pm}$ interchanged).

\subsubsection{Elliptic regularity near the singularity}
\label{sec:ellipt-regul-near}

We now consider the problem in the region $\{x
<c_{0}\}$ where $s_{\tow} =1$; we
assume all pseudodifferential operators and distributions are
supported in this region.  Here we have an explicit expression for $\Ps$:
\begin{equation*}
  \Ps = \left( \sigma + xD_{x}\right)^{2} - ni \left( \sigma +
    xD_{x}\right) - D_{x}^{2} + \frac{(n-1)i}{x}D_{x} -
  \frac{1}{x^{2}}\lap_{\met} - \frac{n^{2}-1}{4}.
\end{equation*}
After rescaling and letting $\lambda =
\sigma / \abs{\sigma}$, we have
\begin{equation*}
  \Ph = h^{2}\Ps = \left( \lambda + hxD_{x}\right)^{2} - nih\left(
    \lambda + hxD_{x}\right) - h^{2}D_{x}^{2} +
  \frac{(n-1)ih}{x}hD_{x} - \frac{h^{2}}{x^{2}}\lap_{\met} - \frac{n^{2}-1}{4}h^{2}.
\end{equation*}
As we are only ever concerned with $\Im \sigma \in [a,b]$ for some
fixed $a,b$, we observe that $\lambda = \pm 1 + \bo (h)$.

We prove the estimate near $x=0$ in two main steps; we first consider
the microlocally elliptic region (i.e., away from the characteristic
set) and then the hyperbolic region (near the characteristic set).  In
this section and the next, we consider only the forward problem (for
$\Ph$); the adjoint problem (for $\Ph^{*}$) proceeds nearly
identically, though with a shift downward in the norms considered
(i.e., $\domh$ replaced by $L^{2}$ and $L^{2}$ replaced by $\domh'$).

The main elliptic estimate near the singularity is the following proposition:
\begin{proposition}
  \label{prop:main-ell-est-semicl}
  Suppose $A \in \Psibh^{0}$ is basic and satisfies $\WFbh'(A) \cap
  \Sigmah = \emptyset$.  For any $G\in \Psibh^{0}$ with
  $\WFbh'(A)\subset \ellbh(G)$, there is a constant $C$ so that
  \begin{equation*}
    \norm{Au}_{\cX_{h}^{s_{\tow}}} \leq C\norm{G \Ph u}_{\cY_{h}^{s_{\tow}-1}}
    + C h^{1/2}\norm{Gu}_{\cX^{s_{\tow}}_{h}} + \bo (h^{\infty})\norm{u}_{\cX^{s_{\tow}}_{h}}.
  \end{equation*}
\end{proposition}
By enlarging the microsupport of $G$, one can improve the factor of
$h^{1/2}$ to $h^{N}$ for any fixed $N$.

Integration by parts allows us to prove the following lemma, which
reduces the problem of controlling the domain norm of solutions to
controlling a $\bl$-norm.
\begin{lemma}
  \label{lemma:integration-by-parts-h}
  Suppose $A,G\in \Psibh^{0}$ with $A$ a basic operator satisfying
  $\WFbh'(A) \subset \ellb(G)$.  There is a constant $C$ so that
  \begin{align*}
    &\int\left( \abs{hD_{x}Au}^{2} + \abs{\frac{1}{x}\grad_{z}Au}^{2} -
    \abs{(h xD_{x} + \lambda)Au}^{2} \right)
      x^{n-1}\,dx\,\operatorname{dvol}_{\met} \\
    &\quad\quad \leq \epsilon \norm{Au}_{\domh} ^{2} +
      \frac{C}{\epsilon}\norm{G\Ps u}_{\domh'}^{2} + Ch
      \norm{Gu}_{\domh}^{2} + \bo (h^{\infty})\norm{u}_{\domh}^{2}
  \end{align*}
  for all $u\in \domh$.
\end{lemma}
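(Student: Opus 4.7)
The plan is to derive the integration-by-parts identity from the factored expression for $P_h$ given just above the lemma, then to control the resulting inner product $\langle P_h Au, Au\rangle$ by writing $P_h Au = A P_h u + [P_h, A] u$ and exploiting the elliptic hypothesis $\WFbh'(A) \subset \ellb(G)$. Each of the two summands will produce one of the terms on the right-hand side of the estimate, while an $\epsilon\|Au\|_{\domh}^2$ slack is kept on hand to absorb harmless remainders.

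First, I will pair $P_h A u$ with $Au$ in $L^2$ with respect to the conic density $x^{n-1}\sqrt{k}\,dx\,dz$, apply the given factorization of $P_h$, and take the real part. Since $\Im \lambda = O(h)$ and the $h^2\tfrac{n^2-1}{4}$ term contributes $O(h^2)\|Au\|_{\domh}^2$, integration by parts yields
\[
\int \bigl(h^2 |dAu|^2 - |(hxD_x + \lambda)Au|^2\bigr)\,x^{n-1}\sqrt{k}\,dx\,dz = -\Re\langle P_h Au, Au\rangle + O(h)\|Au\|_{\domh}^2,
\]
with the $O(h)$ remainder absorbable into the $\epsilon\|Au\|_{\domh}^2$ term by taking $h$ small (or absorbing a multiple into $\epsilon$).

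For the $A P_h u$ piece, the hypothesis $\WFbh'(A) \subset \ellb(G)$ yields a microlocal parametrix decomposition $A = BG + R$ with $B \in \Psibh^0$ and $R \in h^\infty\Psibh^{-\infty}$, so that $\|A P_h u\|_{\domh'} \leq C\|GP_h u\|_{\domh'} + O(h^\infty)\|u\|_{\domh}$. Cauchy--Schwarz and Young then give
\[
|\langle A P_h u, Au\rangle| \leq \epsilon\|Au\|_{\domh}^2 + \frac{C}{\epsilon}\|GP_h u\|_{\domh'}^2 + O(h^\infty)\|u\|_{\domh}^2.
\]

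The commutator $[P_h, A]$ is the delicate part, since the $1/x$ and $1/x^2$ coefficients in $P_h$ place it outside $\Psibh^*$, so no naive commutator identity applies. Here I expand $P_h$ in terms of the bad operators $Q_0 = 1/x$, $Q_1 = hD_x$, $Q_j = \tfrac{h}{x}D_{z_j}$; since $A$ is basic, Lemma~\ref{lem:semicl-commuting-through} lets me rewrite each $[Q_j, A]$ as a combination of the $Q_j$ themselves with pseudodifferential coefficients of one order lower in $h$. The upshot is a representation $[P_h, A] = \sum_j h\,B_j Q_j + h R''$ with all factors microlocalized in $\WFbh'(A)\subset \ellb(G)$; inserting $G$-parametrices and using that $\|Q_j v\|\leq C\|v\|_{\domh}$ leads to
\[
|\langle [P_h, A] u, Au\rangle| \leq \epsilon\|Au\|_{\domh}^2 + Ch\|Gu\|_{\domh}^2 + O(h^\infty)\|u\|_{\domh}^2.
\]
The main obstacle is precisely this commutator bookkeeping: it is the basic hypothesis on $A$ together with Lemma~\ref{lem:semicl-commuting-through} that prevents the loss of a factor of $1/x$ in $[P_h, A]$ and yields the clean $Ch\|Gu\|_{\domh}^2$ bound. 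Combining the three estimates and rearranging produces the lemma.
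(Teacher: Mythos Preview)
Your proposal is correct and follows essentially the same route as the paper: integrate by parts using the factored form of $P_h$ to reduce to $\langle P_h Au, Au\rangle$, split $P_h A = AP_h + [P_h,A]$, control the first piece by Cauchy--Schwarz plus microlocal elliptic regularity through $G$, and handle the commutator via Lemma~\ref{lem:semicl-commuting-through} together with the basic hypothesis on $A$. One small bookkeeping point: since $P_h$ is quadratic in the $Q_j$, the commutator $[P_h,A]$ actually takes the form $h\sum_{j,k} Q_j^* B_{jk} Q_k + h\sum_j B_j Q_j + B$ (as the paper records, following \cite[Lemma~8.6]{MVW}), not just $\sum_j h B_j Q_j + hR''$; the quadratic terms are estimated in exactly the same way and still give the $Ch\Norm[\domh]{Gu}^{2}$ bound, so this does not affect your conclusion.
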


\begin{proof}
  Integration by parts shows that if $v \in \domh$, then
  \begin{align*}
&  \ang{\Ph v, v}   = \\
 & \hspace{.5cm} \int \left( \abs{h D_{x}v}^{2} + \abs{\frac{1}{x}\grad_{z}v}^{2} -
      \abs{(h x D_{x} + \lambda)v}^{2} +
      \frac{n^{2}-1}{4}h^{2}\abs{v}^{2}\right)  2i (\Im \lambda)\ang{(h x
        D_{x} + \lambda)v,\overline{v}} ,
  \end{align*}
  where the pairing on the left side is of $\domh$ with $\domh'$.

  We apply this identity to $v = Au \in \domh$ and then first estimate
  \begin{equation*}
    \ang{A\Ph , Au} + \ang{[ \Ph, A]u, u} - 2 i (\Im
    \lambda)\ang{(hxD_{x}+ \lambda)Au, Au}.
  \end{equation*}
  The first term is estimated by Cauchy--Schwarz:
  \begin{equation*}
    \abs{\ang{ A\Ph u, Au}} \leq \frac{1}{4\epsilon}\norm{A\Ph
      u}^{2}_{\domh'} + \epsilon \norm{Au}^{2}_{\domh}.
  \end{equation*}
  Microlocal elliptic regularity lets us estimate $A\Ph u$ in terms of
  $G\Ph u$.  As $\Im \lambda = \bo (h)$, the final term is bounded by
  \begin{equation*}
    Ch \left( \norm{hxD_{x}Au}^{2} + \norm{Au}^{2}\right).
  \end{equation*}
  The additional factor of $h$ allows these terms to be absorbed into
  the $h\norm{Gu}_{\domh}^{2}$ term.

  We now turn to the term involving $[P, A]$.  After applying
  Lemma~\ref{lem:commuting-through-scl} and keeping track of the
  factors lying in $\frac{1}{x}\Diffbh^{1}$ but not $\Diffbh^{1}$, we
  can estimate this term by $h \norm{Gu}^{2}_{\domh}$.
\end{proof}

As we have assumed that the operators in
Lemma~\ref{lemma:integration-by-parts-h} are supported in $\{ x <
c_{0}\}$, we obtain the following corollary, which we record for use
in the hyperbolic section below:
\begin{corollary}
  \label{cor:another-elliptic-reg}
  If $A$ and $G$ are as in Lemma~\ref{lemma:integration-by-parts-h},
  there are constants $C_{0}$ (independent of $A$) and $C$ so that
  \begin{equation*}
    \norm{Au}_{\domh} \leq C_{0} \norm{Au}_{L^{2}} + C\left(
      \norm{G\Ph u}_{\domh'} +  \norm{Gu}_{\domh}\right) + \bo (h^{\infty})\norm{u}_{\domh}.
  \end{equation*}
\end{corollary}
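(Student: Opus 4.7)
The plan is to upgrade Lemma~\ref{lemma:integration-by-parts-h} by absorbing the $(hxD_x+\lambda)Au$ norm on the right-hand side back into $\|Au\|_{\domh}$ on the left, exploiting the standing localization $x \leq 1/4$. Rewriting the lemma as
\begin{equation*}
\Norm[L^2]{h\,dAu}^{2} \leq \Norm[L^2]{(hxD_{x}+\lambda)Au}^{2} + \epsilon \Norm[\domh]{Au}^{2} + \tfrac{C}{\epsilon}\Norm[\domh']{GPu}^{2} + Ch\Norm[\domh]{Gu}^{2} + \bo(h^{\infty})\Norm[\domh]{u}^{2},
\end{equation*}
my task reduces to bounding $\Norm[L^2]{(hxD_x+\lambda)Au}^{2}$ by $\Norm[L^2]{Au}^{2}$ modulo a small multiple of $\Norm[L^2]{h\,dAu}^{2}$.

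The key algebraic step will use Young's inequality with a parameter $\delta>0$, together with $|\lambda|^{2} = 1 + \bo(h^{2})$, to split
\begin{equation*}
\Norm[L^2]{(hxD_{x}+\lambda)Au}^{2} \leq (1+\delta)(1 + \bo(h^{2}))\Norm[L^2]{Au}^{2} + (1+\delta^{-1})\Norm[L^2]{hxD_{x}Au}^{2}.
\end{equation*}
The support condition $x \leq 1/4$ then delivers the crucial gain
\begin{equation*}
\Norm[L^2]{hxD_{x}Au}^{2} = \int x^{2}|hD_{x}Au|^{2} x^{n-1}\sqrt{k}\,dx\,dz \leq \tfrac{1}{16}\Norm[L^2]{hD_{x}Au}^{2} \leq \tfrac{1}{16}\Norm[L^2]{h\,dAu}^{2},
\end{equation*}
so fixing $\delta = 1/14$ (making $(1+\delta^{-1})/16 = 15/16 < 1$) lets the $\Norm[L^2]{h\,dAu}^{2}$ contribution be absorbed into the left-hand side.

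To finish, I will add $\Norm[L^2]{Au}^{2}$ to both sides so that the left becomes $\Norm[\domh]{Au}^{2}$, absorb the $\epsilon\Norm[\domh]{Au}^{2}$ term by choosing $\epsilon$ small (independently of $A$), and take square roots. The only subtle point is verifying that the constant $C_{0}$ in front of $\Norm[L^2]{Au}$ is independent of $A$; this is automatic because $C_{0}$ is determined solely by the geometric constant $1/16$ arising from the localization $x \leq 1/4$ and by the fixed choice $\delta = 1/14$, neither of which depends on $A$. There is no genuine obstacle in this argument; it is essentially a bookkeeping exercise made possible by the smallness of $x$ on the common support of the operators in play.
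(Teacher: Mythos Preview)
Your proposal is correct and follows essentially the same route as the paper: both arguments start from Lemma~\ref{lemma:integration-by-parts-h}, bound $|(hxD_{x}+\lambda)Au|^{2}$ via Young's inequality, exploit the localization $x\leq 1/4$ to make the coefficient of $|hD_{x}Au|^{2}$ strictly less than one so it can be absorbed on the left, and then move the remaining $|\lambda|^{2}\|Au\|_{L^{2}}^{2}$ term to the right. The paper's version is simply terser and less explicit about the constants than yours.
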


\begin{proof}
  As $x < c_{0}$, we can bound $-\abs{(hxD_{x}+\lambda)Au}^{2}$ below
  by
  \begin{equation*}
    -2c_{0}^{2} \abs{hD_{x}Au}^{2} - 2\abs{\lambda}^{2}\abs{Au}^{2}.
  \end{equation*}
  The first of these terms can be absorbed (together with $\epsilon
  \norm{Au}_{\domh}^{2}$) into the first term on the left in
  Lemma~\ref{lemma:integration-by-parts-h}, while the second term is
  moved to the right side.
\end{proof}

Proposition~\ref{prop:main-ell-est-semicl} then follows immediately
by applying the following lemma and bounding $\norm{G\Ph
  u}_{\domh'}\leq \norm{G\Ph u}_{L^{2}}$:
\begin{lemma}
  \label{lem:elliptic-ingredient}
  Suppose $A$ and $G$ are as in the statement of
  Proposition~\ref{prop:main-ell-est-semicl}.  If, in addition, $A$ is
  supported in $\{ x < \delta / \sqrt{2}\}$ and $\{(\xib + \lambda)^{2} <
  \frac{1}{2}\delta^{-2}(\xib^{2} + \abs{\zetab}^{2})\}$, then
  \begin{equation*}
    \norm{Au}_{\domh} \leq C \norm{G\Ph u}_{\domh'} +
    Ch^{1/2}\norm{Gu}_{\domh} + \bo (h^{\infty})\norm{u}_{\domh}.
  \end{equation*}
\end{lemma}

\begin{proof}
  As $A$ is supported in $\{ x < \delta / \sqrt 2\}$, we know
  \begin{align*}
 &    \delta^{-2}\int \left( \abs{h x D_{x}Au}^{2} + \abs{h
        \grad_{z}u}^{2}\right) - \int \abs{(hxD_{x}+\lambda)Au}^{2}
    \leq \\
    & \hspace{1.5cm} \frac{1}{2}\int \left( \abs{h D_{x}Au}^{2} +
      \abs{\frac{1}{x}\grad_{z}Au}^{2}- \abs{(hxD_{x}+\lambda)Au}^{2}\right).
  \end{align*}
  Our other hypothesis on the support of $A$ shows that we can find operators
  $B,F\in \Psibh^{1}$ with $\WFbh'(A) \subset \ellbh(B)$ so  that
  \begin{equation*}
    Z = \delta ^{-2} \left( (hxD_{x})^{*}(hxD_{x}) +
      \frac{h^{2}}{x^{2}}\lap_{\met}  - \left( h xD_{x} +
        \lambda\right)^{*}\left( hxD_{x} + \lambda\right)\right) - (B^{*}B + hF)
  \end{equation*}
  satisfies
  \begin{equation*}
    \WFbh(Z u) \cap \WFbh'(A) = \emptyset.
  \end{equation*}
  Integrating by parts and applying
  Lemma~\ref{lemma:integration-by-parts-h} shows that
  \begin{align*}
    \norm{BAu}^{2}_{L^{2}} &+ \frac{1}{2}\int \left( \abs{hD_{x}Au}^{2}
    + \abs{\frac{h}{x}\grad_{z}Au}^{2}\right) \leq \epsilon
                             \norm{Au}_{\domh}^{2}+
                             \frac{C}{\epsilon}\norm{G\Ph
                             u}_{\domh'}^{2} \\
    &\quad + Ch \norm{Gu}_{\domh}^{2} + Ch\norm{FAu}\norm{Au} + \bo (h^{\infty})\norm{u}_{\domh}^{2}.
  \end{align*}
  As $B$ is elliptic on $\WFb'(A)$, the left side controls
  $\norm{Au}_{\domh'}^{2}$, while the right side is controlled by
  \begin{equation*}
    \epsilon \norm{Au}_{\domh}^{2} + C \norm{G\Ph u}_{\domh'}^{2} + Ch
    \norm{Gu}_{\domh}^{2} + \bo (h^{\infty})\norm{u}_{\domh}^{2}.
  \end{equation*}
  Absorbing the first term into the left side finishes the proof.
\end{proof}

\subsubsection{Hyperbolic propagation near the singularity}
\label{sec:hyperb-prop}

In this subsection we complete the proof of the third and fourth parts
of Proposition~\ref{prop:mapping-props}.  In particular, we establish
the following proposition:
\begin{proposition}[{cf.~\cite[Proposition 5.8]{GW}}]
  \label{prop:hyperbolic-prop-semicl}
  If $G \in \Psibhc$ is elliptic at $\{(0,z,0,0) \mid z \in Z\}$, then
  there are $\cQ, \cQ_{1} \in \Psibhc$ with $\cQ$ elliptic at $\{ (0,
  z, 0, 0) \mid z \in Z\}$ and
  \begin{align*}
    \WFbh'\cQ &\subset \ellbh(G), \\
    \WFbh'\cQ_{1} &\subset \ellbh (G) \cap \{ - \xib > 0\},
  \end{align*}
  so that for all $u \in \domh$,
  \begin{equation*}
    \norm{\cQ u}_{\domh} \leq \frac{C}{h}\norm{G\Ph u}_{\domh'} + C
    \norm{\cQ_{1}u}_{\domh} + Ch \norm{Gu}_{\domh} + \bo (h^{\infty})\norm{u}_{\domh}.
  \end{equation*}
\end{proposition}
We note that the estimate in
Proposition~\ref{prop:hyperbolic-prop-semicl} immediately implies the estimate
\begin{equation*}
  \norm{\cQ u}_{\cX_{h}^{s_{\tow}}} \leq \frac{C}{h} \norm{G\Ph
    u}_{\cY_{h}^{s_{\tow}-1}} + C\norm{\cQ_{1} u}_{\cX_{h}^{s_{\tow}}}
  + Ch \norm{Gu}_{\cX_{h}^{s_{\tow}}} + \bo(h^{\infty})\norm{u}_{\cX_{h}^{s_{\tow}}},
\end{equation*}
finishing the proof of the fourth part of
Proposition~\ref{prop:mapping-props}.

As in Section~\ref{sec:hyperb-prop-bulk}, we introduce a basic operator
$A\in \Psibhc$ with symbol given by
\begin{equation*}
  a = \chi_{0}(2 - \phi / \delta) \chi_{1}(2 - \xib/\delta)
  \chi_{2}(\xib^{2} + \abs{\zetab}^{2}),
\end{equation*}
where $\chi_{i}$ are the same functions as in that section and $\phi =
-\xib + \frac{1}{\beta^{2}\delta}x^{2}$.  Recall that $\chi_{2}$ is
supported in $[-2c_{1},2c_{1}]$ and identically one on
$[-c_{1},c_{1}]$, so that $a$ is essentially determined by the three
parameters $c_{1}, \beta$, and $\delta$.

We also choose a basic operator $B \in \Psibhc$ with symbol
\begin{equation*}
  b = \frac{2}{\sqrt{\delta}}(\chi_{0}\chi_{0}')^{1/2}\chi_{1}\chi_{2},
\end{equation*}
so that factors of $B$ arise when derivatives land on $\chi_{0}$ in
$A$.

As in that section (and Melrose--Vasy--Wunsch~\cite{MVW} or
Gannot--Wunsch~\cite{GW}), the symbol $a$ is well-localized:
\begin{lemma}
  \label{lemma:semicl-local-symb}
  Given any neighborhood $U$ of $\{(0,z,0,0) \mid z \in Z\}$ in
  $\Tbstar \mf$ and any $\beta > 0$, there are $\delta_{0}>0$ and
  $c_{1}>0$ so that $a$ is supported in $U$ for all $0 < \delta < \delta_{0}$.
\end{lemma}

We now compute the commutator of $\Ph$ with $A^{*}A$:
\begin{lemma}
  \label{lem:semicl-compute-comm}
  With $Q_{0}, Q_{1}$, and $Q_{j}$ denoting the conic vector fields as
  in Section~\ref{sec:semicl-bl-calc}, the commutator of $\Ph$ and
  $A^{*}A$ is given by
  \begin{align*}
   & \frac{i}{h}\left( \Ph^{*}A^{*}A - A^{*}A\Ph\right)= \\ 
    & \hspace{.5cm} -B_{0}\Ph + B^{*}\left(
      C^{*}C + R_{0} + \sum_{j}R_{j}Q_{j} +
      \sum_{j,k}Q_{j}^{*}R_{jk}Q_{k}\right)B + E' + E'' + hR',
  \end{align*}
  where the terms enjoy the following properties:
  \begin{itemize}
  \item $\ds C = hxD_{x} + \lambda$,
  \item $\ds \sigmabh (B_{0}) = 2 \pd[\xib](a^{2})$,
  \item $\ds R_{0}, R_{j}, R_{jk}\in \Psibhc$ satisfy
    \begin{equation*}
      \abs{\sigmabh (R_{\bullet})} \leq C_{1} (\delta \beta + \beta^{-1}),
    \end{equation*}
  \item $\ds R'' \in x^{-2}\Diffbh^{2}\Psibhc$,
  \item $E', E'' \in x^{-2}\Diffbh^{2}\Psibhc$ satisfy
    \begin{equation*}
      \WFbh'(E')\subset \{-\xib >0\}, \quad \WFbh'(E'')\cap \Sigmah = \emptyset.
    \end{equation*}
  \end{itemize}
\end{lemma}

\begin{proof}
  We use Lemma~\ref{lem:commuting-through-scl} to commute $A^{*}A$
  through $\Ph$, using that $A$ is basic.  The main term arising from
  the commutator reproduces the main terms in $\Ph$; indeed, it is of
  the form
  \begin{equation*}
    B^{*}\left( (hD_{x})^{*}(hD_{x}) + \frac{h^{2}}{x^{2}}\lap_{\met}\right)B.
  \end{equation*}
  We use the form of the operator to exchange this term for $B_{0}\Ph$
  and $B^{*}C^{*}CB$.  The other terms in the expression arise in a
  similar way as those in Melrose--Vasy--Wunsch~\cite{MVW} (explained
  above in the proof of Lemma~\ref{lem:commuting-through}).  The term
  arising from $\Ph^{*}-\Ph$ can be absorbed into the $R_{0}$ term as
  the symbol of $A$ is estimated by $\sqrt{\delta}b$.
\end{proof}

We also require that the remainder terms are sufficiently small as to
be estimable:
\begin{lemma}
  \label{lem:estimating-remainders}
  For any $\epsilon > 0$, there are $\beta > 0$ and $\delta_{1} \in
  (0,\delta_{0})$ so that for all $0 < \delta < \delta_{1}$,
  \begin{equation*}
    \abs{\ang{R_{0}Bu, Bu}} + \sum_{j}\abs{\ang{R_{j}Q_{j}Bu, Bu}} +
    \sum_{j,k}\abs{\ang{Q^{*}_{j}R_{jk}Q_{k}Bu, Bu}} \leq \epsilon
    \norm{Bu}^{2}_{\domh} + \bo (h^{\infty})\norm{u}_{\domh}^{2}.
  \end{equation*}
\end{lemma}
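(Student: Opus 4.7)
The plan is to reduce each of the three terms to a product of norms of the form $\Norm[\domh]{Bu}$ (times a small constant) using Cauchy--Schwarz, the $L^2$-boundedness of order-zero operators in $\Psibh^{\comp}$, and the fact that on functions supported near $\pd X$ the operators $Q_0 = 1/x$, $Q_1 = hD_x$, and $Q_j = \tfrac{h}{x}D_{z_j}$ are all controlled by the domain norm, by Lemma~\ref{lem:base-domain}. The crucial quantitative input is the symbol bound
\begin{equation*}
  |\sigma(R_0)|, \ |\sigma(R_j)|, \ |\sigma(R_{jk})| \leq C_1(\delta\beta + \beta^{-1})
\end{equation*}
from Lemma~\ref{lem:semiclassical-commutator}, which can be made arbitrarily small by first choosing $\beta$ large (to kill $C_1\beta^{-1}$) and then $\delta$ small (to kill $C_1\delta\beta$).

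For the first term, Cauchy--Schwarz together with the norm estimate for order-zero operators (property (vi) of the semiclassical calculus in Section~\ref{sec:semicl-b-calc}) gives some $R_0' \in \Psibh^{-\infty}$ with
\begin{equation*}
  |\langle R_0 Bu, Bu\rangle | \leq \bigl(2\sup|\sigma(R_0)|\bigr)\Norm[L^2]{Bu}^2 + \bo(h^\infty)\Norm[L^2]{R_0'Bu}\Norm[L^2]{Bu},
\end{equation*}
and the first factor is at most $2C_1(\delta\beta + \beta^{-1})$, while the residual piece is absorbed into $\bo(h^\infty)\Norm[\domh]{u}^2$ by microlocal support considerations (and the trivial bound $\Norm[L^2]{Bu} \leq \Norm[\domh]{Bu}$). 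For the second term, write $\langle R_j Q_j Bu, Bu\rangle$ and again apply Cauchy--Schwarz; we estimate $\Norm[L^2]{R_j Q_j Bu} \leq (2\sup|\sigma(R_j)|)\Norm[L^2]{Q_j Bu} + \bo(h^\infty)\Norm[\domh]{u}$ and then use $\Norm[L^2]{Q_j Bu} \leq \Norm[\domh]{Bu}$ (which holds because $B$ is compactly microlocalized near the cone point and one is controlling $1/x$, $hD_x$, and $\tfrac{h}{x}D_{z_j}$ applied to $Bu$ by the very definition of the rescaled domain norm, cf.\ Lemma~\ref{lem:base-domain}). For the third term, integrate $Q_j^*$ by parts as $\langle Q_j^* R_{jk} Q_k Bu, Bu\rangle = \langle R_{jk} Q_k Bu, Q_j Bu\rangle$ up to a commutator $[Q_j, R_{jk}] \in h\,\Psibh^{\comp}$ that contributes an additional $h\Norm[\domh]{Bu}^2$ term, and then apply the same two bounds to obtain control by $2C_1(\delta\beta + \beta^{-1})\Norm[\domh]{Bu}^2$ up to residual terms.

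Summing the three contributions yields an estimate of the form
\begin{equation*}
  \mathrm{LHS} \leq C\bigl(\delta\beta + \beta^{-1} + h\bigr)\Norm[\domh]{Bu}^2 + \bo(h^\infty)\Norm[\domh]{u}^2.
\end{equation*}
Given $\epsilon > 0$, first choose $\beta$ so that $C\beta^{-1} < \epsilon/3$; then choose $\delta_1 \in (0,\delta_0)$ so that $C\delta_1\beta < \epsilon/3$; and finally observe that the $h$ contribution is negligible for $h$ small (and in any case can be absorbed into $\bo(h^\infty)\Norm[\domh]{u}^2$ via a standard microlocalization since it is multiplied by a compactly microlocalized $\Norm[\domh]{Bu}^2$, or alternatively kept on the left by shrinking $\epsilon$). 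The one place requiring care is making sure that when we commute $Q_j^*$ past $R_{jk}$ we genuinely land in $h\,\Psibh^{\comp}$ rather than pick up a boundary term; this is the analogue in the semiclassical setting of the manipulations of Lemma~\ref{lem:semicl-commuting-through}, and is the main technical obstacle because it is where the noncommutativity of $1/x$ and $\Psibh^{\comp}$ must be handled carefully. Once this is verified, the estimate is complete.
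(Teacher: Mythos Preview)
Your proof is correct and follows essentially the same route as the paper's: use the $L^{2}$-boundedness property (vi) of $\Psibh^{\comp}$ together with the symbol bound $|\sigma(R_{*})|\leq C_{1}(\delta\beta+\beta^{-1})$, control $Q_{j}Bu$ by $\Norm[\domh]{Bu}$, and then choose $\beta$ large and $\delta$ small.

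The one place you overcomplicate things is the $R_{jk}$ term.  You write that moving $Q_{j}^{*}$ across produces the identity $\langle Q_{j}^{*}R_{jk}Q_{k}Bu, Bu\rangle = \langle R_{jk}Q_{k}Bu, Q_{j}Bu\rangle$ ``up to a commutator $[Q_{j},R_{jk}]\in h\Psibh^{\comp}$,'' and then flag this commutator as ``the main technical obstacle.''  But there is no commutator here at all: the displayed equality is simply the definition of the adjoint, and both entries of the right-hand pairing lie in $L^{2}$ because $Bu\in\domh$.  The paper does exactly this, writing $|\langle Q_{j}^{*}R_{jk}Q_{k}Bu,Bu\rangle| = |\langle R_{jk}Q_{k}Bu,Q_{j}Bu\rangle|$ with no further comment.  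Consequently your extra $h\Norm[\domh]{Bu}^{2}$ term never arises, and the paragraph about absorbing it (which is in any case a bit shaky, since a single power of $h$ times $\Norm[\domh]{Bu}^{2}$ is not $\bo(h^{\infty})\Norm[\domh]{u}^{2}$) can simply be deleted.
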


\begin{proof}
  As in the proof of Lemma~\ref{lemma:bulk-remainder-terms}, we rely on
  the symbol estimates in Lemma~\ref{lem:semicl-compute-comm}.
  Indeed, we bound
  \begin{align*}
    \norm{R_{\bullet}v}_{L^{2}} &\leq 2 \sup
    \abs{\sigmabh(R_{\bullet})}\norm{v}_{L^{2}} +
                                  \bo(h^{\infty})\norm{v}_{L^{2}}  \\
                                &\leq 2C_{1}\left( \delta \beta +
                                  \beta^{-1}\right)\norm{v}_{L^{2}} +
                                  \bo (h^{\infty})\norm{v}_{L^{2}}.
  \end{align*}
  We now fix $\beta > 0$ sufficiently large and then take
  $\delta_{1}\in (0,\delta_{0})$ sufficiently small to make
  $2C_{1}(\delta_{1}\beta + \beta^{-1}) < \epsilon / 3$.

  We now consider the individual terms.  For the $R_{0}$ term, we
  apply the above inequality with $v=Bu$ and appeal to
  Cauchy--Schwarz.  The $R_{j}$ and $R_{jk}$ terms are nearly
  identical, e.g.,
  \begin{align*}
    \abs{\ang{Q_{j}R_{jk}Q_{k}Bu,Bu}} &= \abs{\ang{R_{jk}Q_{k}Bu,
                                        Q_{j}Bu}}\\
    &\leq 2C_{1}(\delta\beta+\beta^{-1})\norm{Bu}_{\domh}^{2} \leq
      \epsilon\norm{Bu}_{\domh}^{2}. 
  \end{align*}
\end{proof}

We now finish the proof of
Proposition~\ref{prop:hyperbolic-prop-semicl}.
\begin{proof}[Proof of Proposition~\ref{prop:hyperbolic-prop-semicl}]
  Given $u \in \domh$, we apply Lemma~\ref{lem:semicl-compute-comm} to
  write
  \begin{align*}
   &  \frac{2}{h}\Im \ang{A\Ph u, Au} =
                                      \frac{i}{h}\ang{\left(\Ph^{*}A^{*}A - A^{*}A\Ph\right)u,u} \\
    & \hspace{1.5cm} = \norm{CBu}^{2}_{L^{2}} + \ang{R_{0}Bu,Bu} +
      \sum_{j}\ang{R_{j}Q_{j}Bu,Bu} \\
                                    & \hspace{1.75cm}
                                      +\sum_{j,k}\ang{R_{jk}Q_{k}Bu,Q_{j}Bu}
                                      + \ang{E'u,u} + \ang{E''u,u} +
                                      h\ang{R'u,u} - \ang{B_{0}Pu,u}.
  \end{align*}
  As shown above in Lemma~\ref{lem:psib-semiclassical}, $A,B$, and $CB$
  preserve $\domh$, while $B_{0}$ preserves $\domh'$.

  By Corollary~\ref{cor:another-elliptic-reg} and the ellipticity of
  $C$ on $\WFbh'(B)$, there is a constant $c> 0$ so that 
  \begin{equation*}
    c\norm{Bu}_{\domh}^{2} \leq \norm{CBu}^{2}_{L^{2}} +
    C\norm{G\Ph u}_{\domh'}^{2} + Ch \norm{Gu}_{\domh}^{2} +
    \bo(h^{\infty})\norm{u}_{\domh'}^{2},
  \end{equation*}
  where $c>0$ is independent of $\beta$ and $\delta$ and $G$ is
  elliptic on $\WFbh'(B)$.

  We now take $G\in \Psibhc$ to be elliptic on $\WFbh'(B)$ and
  $\cQ_{1}\in \Psibhc$ to be elliptic on $\WFbh'(E')$ with
  $\WFbh'(\cQ_{1})\subset \ellbh(G)\cap\{-\xib > 0\}$.  Applying
  Lemma~\ref{lem:estimating-remainders} yields an estimate of the form
  \begin{align*}
    \frac{c}{2}\norm{Bu}_{\domh}^{2} &\leq \frac{2}{h}\abs{\ang{A\Ph u,
                                       Au}} + C\norm{G\Ph u}_{\domh'}^{2} + Ch\norm{Gu}_{\domh}^{2} \\
    &\quad + \abs{\ang{(E'+E'')u,u}} + h\abs{\ang{R'u,u}} +
      \abs{\ang{B_{0}\Ph u, u}} + \bo (h^{\infty})\norm{u}_{\domh}^{2}.
  \end{align*}
  We estimate the $E'$ term by $\cQ_{1}$ via microlocal elliptic
  regularity and the $E''$ term by
  Proposition~\ref{prop:main-ell-est-semicl}.  The second line is
  therefore bounded by
  \begin{equation*}
    \frac{C}{h}\norm{G\Ph u}_{\domh'}^{2} + Ch \norm{Gu}_{\domh}^{2} +
    C\norm{\cQ_{1}u}_{\domh}^{2} + \bo(h^{\infty})\norm{u}_{\domh}^{2}.
  \end{equation*}

  Because $\WFbh'(A)\subset\ellbh(G)$, we can further estimate
  \begin{equation*}
    \frac{2}{h}\abs{\ang{A\Ph u, Au}} \leq
    \frac{C}{h^{2}\epsilon}\norm{G\Ph u}_{\domh'}^{2} + C\epsilon
    \norm{Au}_{\domh}^{2} + \bo (h^{\infty})\norm{u}_{\domh}^{2}.
  \end{equation*}

  By construction, $\chi_{0}(s) =s^{2}\chi_{0}'(s)$ for $s>0$, and so
  \begin{equation*}
    a = \frac{1}{2}\delta^{1/2}(2-\phi/\delta)b.
  \end{equation*}
  We may therefore write $A = FB + hF'$ for some $F,F'\in \Psibhc$ in
  order to estimate $Au$ by $Bu$.  Putting the above together yields
  the estimate
  \begin{equation*}
    \norm{Bu}_{\domh} \leq \frac{C}{h}\norm{G\Ph u}_{\domh'} +
    C\norm{\cQ_{1}u}_{\domh} + Ch^{1/2}\norm{Gu}_{\domh} +
    \bo(h^{\infty})\norm{u}_{\domh}. 
  \end{equation*}
  Taking $\cQ = B$ finishes the proof.
\end{proof}

\section{Proof of Theorem~\ref{thm:main-thm}}
\label{sec:proof-theor-refth}

This section is devoted to a sketch of the proof of the main theorem,
which is implied by the more refined theorem below:
\begin{theorem}
  \label{thm:main-thm-v2}
  Suppose $w$ is a solution of the wave equation on a cone.  If the
  initial data of $w$ are smooth and compactly supported away from the
  conic singularity, i.e.,
  \begin{align*}
    \Box w &= 0 \text{ on }\reals \times C(Z), \\
    (w, \pd[t]w)|_{t=0} &\in C^{\infty}_{c}(C(Z))\times C^{\infty}_{c}(C(Z)),
  \end{align*}
  then, viewed as a distribution on $[M; S_{+}\cup S_{-}]$,
  \begin{enumerate}
  \item $w$ is conormal to all six boundary hypersurfaces, and 
  \item $w$ is partially polyhomogeneous (i.e., $w \in \pphg^{\cE}([M;
    S_{+}\cup S_{-}])$) at all boundary hypersurfaces other than $\cf$ with index sets
    \begin{equation*}
      \cE =
      \begin{cases}
        \emptyset & \text{at }C_{0}\\
        \left\{ \left(-i\left( \frac{n-1}{2} + j\right), 0\right) \mid j = 0, 1, 2,
          \dots \right\} & \text{at }\scri^{+}, \scri^{-} \\
        \left\{ \left( -i\left( \frac{n}{2} + k + \sqrt{\left(
                \frac{n-2}{2}\right)^{2}+\mu_{j}^{2}}\right), 0\right) \mid
          j, k = 0, 1, 2, \dots \right\} &\text{at }C_{+}, C_{-}
      \end{cases},
    \end{equation*}
    where $\mu_{j}^{2}$ are those eigenvalues of $\lap_{\met}$ on $Z$
    so that
    \begin{equation*}
      \sqrt{\left( \frac{n-2}{2}\right)^2 + \mu_j^2} \notin
      \frac{1}{2} + \integers.
    \end{equation*}
  \end{enumerate}
\end{theorem}

In terms of the radiation field $\mathcal{R}_{+}[w]$, the expansion at
$C_{+}$ implies the expansion in Theorem~\ref{thm:main-thm}.
Theorem~\ref{thm:main-thm-v2} is stronger than
Theorem~\ref{thm:main-thm}, as it implies a joint asymptotic
expansion at $C_{+}\cap \scri^{+}$.  

The proof follows the same outline as in the setting of asymptotically
Minkowski spaces to obtain the existence of the asymptotic expansion;
the key missing steps require extending the propagation and Fredholm
statements near the conic singularities and are formalized in
Propositions~\ref{prop:output-of-bulk-prop}
and~\ref{prop:mapping-props}.  As the same approach works here, we
provide only an abbreviated sketch.

Our strategy is to show first that the solution is partially
polyhomogeneous.  As the initial data are compactly supported, finite
speed of propagation implies that the solution is trivial near
$C_{0}$.  The finite speed of propagation also allows us to replace
$w$ with $\chi w$, where $\chi$ is a smooth cutoff function to a
neighborhood of $\overline{C_{+}}$ in $M$; $\chi w$ is then the
forward solution of an \emph{inhomogeneous} wave equation on
$\reals \times C(Z)$.  We show that $\chi w$ is partially
polyhomogeneous on the blown-up space $[M; S_{+}]$ and an identical
argument near $\overline{C_{-}}$ then establishes the claim for $w$.
Establishing the partial polyhomogeneity of $w$ has as its byproduct
a proof that the index sets at $\scri^{\pm}$ are as stated.
Finally, we establish that the exponents seen in the expansion at
$C_{\pm}$ can be characterized as resonances associated to the
hyperbolic cone with the same link.  It suffices to show this for the
forward solution as the backward solution has the same form near
$\scri^{-}\cup C_{-}$.

We therefore begin by considering the equation
\begin{equation*}
  \Box_{g} w = f'
\end{equation*}
on $M^{\circ}$, where $f' \in \CI_{c}$ and suppose $w$ is the forward
solution.  By translating in time and replacing $w$ with $\chi w$, we
may assume that $f'$ (and therefore $w$) is supported in the forward
light cone $\{t > r\}$ and in $\{ t > 1\}$.  With $\rho$ denoting a
defining function\footnote{Near $S_{+}$, the primary region of
  interest, we recall that $\rho = t^{-1}$.} for $\mf$ and $x$ a
defining function for $\cf$, we consider the conjugated equation
\begin{equation*}
  Lu = f,
\end{equation*}
where
\begin{align*}
  L &\equiv \rho^{-\frac{n-1}{2}-2}\Box_{g}\rho^{\frac{n-1}{2}}, \\
  u &= \rho^{-\frac{n-1}{2}}w \in C^{-\infty}(M), \\
  f &= \rho^{-\frac{n-1}{2}-2}f' \in \CI_{c}(M^{\circ}).
\end{align*}
This conjugation and rescaling transform $\Box_{g}$ into $L$, a
``wedge-$\bl$-differential-operator'', i.e., a $\bl$-differential
operator at $\mf$ and a wedge-type operator at $\cf$.  Note that the
partial polyhomogeneity of $u$ implies that of $w$ with index sets
shifted by $(n-1)/2$.

Due to the scaling invariance (in the variable $\rho$) of the metric,
$L$ agrees with its normal operator, so $N(L) \equiv L$.  This
observation greatly simplifies the analysis of the problem by
eliminating remainder terms and thus allows us to avoid an additional
iterative argument; the lack of remainder terms accounts for the
absence of logarithmic terms in the expansions of
Theorem~\ref{thm:main-thm-v2}.\footnote{If we instead perturb the
  spacetime metric, the remainder terms can be handled as in the
  asymptotically Minkowski setting~\cite{BVW2}.}

For convenience, we recall from
Section~\ref{sec:operators} the form of the operator $L$
in a neighborhood of $\overline{C_{+}}$ in the coordinate system given
by $(\rho = 1/t, x = r/t, z)$.  As we eventually pass to the blown-up
space $[M; S_{+}]$, it is often convenient to include a coordinate
defining $S_{+}$.  We therefore also include the coordinate systems
$(\rho = 1/t, v = (t-r)/t,z)$, which are valid in a neighborhood of $S_{+}$.

Near $S_{+}$, in the coordinate system given by $(\rho = 1/t, x = r/t, z)$, $L$ has the
following form:
\begin{equation*}
  L = \left( \rho D_{\rho} + x D_{x}\right)^{2} - n i \left( \rho
    D_{\rho} + x D_{x}\right) - D_{x}^{2} + \frac{(n-1)i}{x}D_{x} -
  \frac{1}{x^{2}}\lap_{\met} - \frac{n^{2}-1}{4}.
\end{equation*}

Similarly, in terms of $(\rho = 1/t, v = (t - r)/t, z)$, $L$ takes the
form
\begin{equation*}
  L = \left( \rho D_{\rho} - (1-v)D_{v}\right)^{2} - n i \left( \rho
    D_{\rho} - (1-v)D_{v}\right) - D_{v}^{2} - \frac{(n-1)i}{1-v}D_{v}
  - \frac{1}{(1-v)^{2}}\lap_{\met} - \frac{n^{2}-1}{4}.
\end{equation*}

After applying the Mellin transform to the identity $Lu = f$, we
obtain a family of equations\footnote{Recall that $u$ has already been
  localized to $\{ \rho < 1\}$, so it is unnecessary to include an
  additional cut-off function here.}
\begin{equation*}
  \Ps \us = \fs,
\end{equation*}
where $\Ps = \widehat{N}(L)$ is the reduced normal operator of $L$.
As $w$ vanishes near $\overline{C_{-}}$ in $M$, we may arrange that $\us$
also vanishes in a neighborhood of $\overline{C_{-}}$ in $X = \mf$.
In fact, as we are able to assume that $f'$ and $w$ are supported in
the interior of the forward light cone $\{ t > r\}$, we may further
assume that $\fs$ and $\us$ are supported in $\overline{C_{+}}$.

We start by showing that $\us$ lies in the following space of conormal distributions:
\begin{definition}
  Suppose $\ut$ is a distribution on $X = \mf$.  We say that
  $\ut \in I^{(s)} (S_{+})$ if
  \begin{enumerate}
  \item $\ds \ut \in\Hb^{s}(X)$,
  \item away from $S_{+}$, $\ds\ut \in \Hb^{\infty}(X)$, and 
  \item if $V_{1}, \dots, V_{r}$ are $\bl$-vector fields on $X$ with
    principal symbols vanishing on $N^{*}S_{+}$, then
    $V_{1}\dots V_{r}\ut \in \Hb^{s}(X)$.
  \end{enumerate}
\end{definition}
In other words, $\ut \in I^{(s)}(S_{+})$ if it lies in $\Hb^{s}$ and
lies in $\Hb^{\infty}(X)$ away from $S_{+}$.

One consequence of Proposition~\ref{prop:output-of-bulk-prop} and
mapping properties of the Mellin transform is the following proposition:
\begin{proposition}
  \label{prop:conormal-space}
  There are $\varsigma_{0}, s$ so that $\us$ is holomorphic on the
  upper half-plane $\Im\sigma > -\varsigma$ taking values in
  $I^{(s-0)}(S_{+})$ and obeys the following estimate for each $N$ and
  each seminorm $\norm{\bullet}$ on $I^{(s)}(S_{+})$:
  \begin{equation*}
    \sup_{\Im \sigma > -\varsigma_{0}} 
    \int_{\Im \sigma = C}\norm{\us}^{2} \langle \sigma \rangle ^{N}\, d(\Re \sigma) < \infty.
  \end{equation*}
\end{proposition}

In order to aid in bookkeeping, we introduce a compact name for these
spaces.  In what follows, $\mathcal{H}(\Omega)$ refers to the space
of holomorphic functions on the domain $\Omega \subset \complexes$.
\begin{definition}
  For $\varsigma, s, \in \reals$, we let $\complexes_{\varsigma}$
  denote the upper half-plane $\Im \sigma > - \varsigma$ and then
  define
  \begin{equation*}
    \mathcal{B}(\varsigma, s) = \mathcal{H}(\complexes_{\varsigma})
    \cap \langle \sigma\rangle^{-\infty}L^{\infty}_{\Im \sigma}
    L^{2}(\reals_{\Re\sigma} ; I^{(s)}(S_{+})).
  \end{equation*}
  In other words, $\mathcal{B}(\varsigma, s)$ consists of those
  $g_{\sigma}$ holomorphic in $\sigma \in \complexes_{\varsigma}$
  taking values in $I^{(s)}(S_{+})$ so that for each seminorm on
  $I^{(s)}(S_{+})$,
  \begin{equation*}
    \int_{-\infty}^{\infty}\norm{g_{\mu + i \nu}}^{2}_{\bullet}\langle \mu\rangle^{2k}\,d\mu
  \end{equation*}
  is uniformly bounded in $\nu > -\varsigma$.
\end{definition}
Observe that because $f \in
\CI_{c}(M^{\circ})$, we have 
\begin{equation*}
  \fs \in \mathcal{B}(C, s') \text{ for all }C, s'.
\end{equation*}

Proposition~\ref{prop:conormal-space} can be restated as saying that
there are $\varsigma_{0}$, $s$ so that $\us
\in\mathcal{B}(\varsigma_{0}, s-0)$.  We now turn our attention  to
its proof.

\begin{proof}[Proof of Proposition~\ref{prop:conormal-space}]
  Because $\rho^{(n-1)/2}u$ lies in some $\Hbdst^{s,\gamma}(M)$, we
  have
  \begin{equation}
    \label{eq:initial-est-for-ws}
    \us \in \mathcal{H}(\complexes_{\varsigma_{0}}) \cap \langle
    \sigma \rangle ^{\max(0,-s)} L^{\infty}L^{2}(\reals , H^{s}(\mf)),
  \end{equation}
  where $\varsigma_{0} = \gamma - (n-1)/2$.  By reducing $\gamma$, we may
  assume that $s + \gamma < 1/2$ so as to be able to apply the
  regularity results of Proposition~\ref{prop:output-of-bulk-prop}.
  We may also arrange that $\us$ vanishes in a neighborhood of
  $\overline{C_{-}}$ in $\mf$ because $u$ vanishes near
  $\overline{C_{-}}$ in $M$.

  Proposition~\ref{prop:output-of-bulk-prop} implies that $w$ is
  jointly conormal to $S_{+}$ and $\cf$ and so by the mapping
  properties of the Mellin transform (see, e.g., an earlier work in
  this series~\cite[Lemma 2.3]{BVW1}),
  \begin{equation*}
    \us \in \mathcal{B}(\varsigma_{0}, -\infty).
  \end{equation*}
  Interpolating with equation~\eqref{eq:initial-est-for-ws} yields the result.
\end{proof}

Having placed $\us$ in the holomorphic conormal space
$\mathcal{B}(\varsigma_{0}, s-0)$, we may begin the
inversion procedure.  Because
\begin{equation*}
  \Ps\us = \fs,
\end{equation*}
our aim is to invert $\Ps$ and employ a contour-shifting argument to
enlarge the domain of meromorphy for $\us$.

By Proposition~\ref{prop:mapping-props}, $\Ps^{-1}$ forms a
meromorphic family in any strip in the complex plane (though its
domain and range are dependent on the location of the strip).  As
$\fs$ is entire, writing $\us = \Ps^{-1}\fs$, we see that
$\us$ is meromorphic in any upper half-plane taking values in the
$\cX$ spaces.  More precisely, we shift the contour $N$ units to see that
$\us$ is meromorphic in the half plane $\Im \sigma > - \varsigma_{0}-N$
with values in $\cX^{s_{\tow}}$, where $s_{\tow}|_{\Lambda^{+}} <
\frac{1}{2} - \varsigma_{0}-N$.  In particular, $\us$ is meromorphic
with values in $\langle \sigma \rangle^{-\infty}L^{\infty}L^{2}(\reals
; \Hb^{\min (s-0, \frac{1}{2}-\varsigma_{0}-N)})$.  On the other hand,
since $\Ps$ maps the expression to a conormal space, it must in fact
take values in the conormal space
\begin{equation*}
  \langle \sigma \rangle^{-\infty} L^{\infty}L^{2}(\reals ; I^{(\min
    (s-0, \frac{1}{2}-\varsigma_{0}-N-0))}),
\end{equation*}
by propagation of the propagation results of
Section~\ref{sec:propagation-at-conic} as well as the first case of
Theorem~6.3 of Haber--Vasy~\cite{Haber-Vasy}, which concerns the
propagation of Lagrangian regularity into conic Lagrangian
submanifolds of radial points.

We have therefore shown that for any $N$,
\begin{align*}
  \us &\in \mathcal{B}(\varsigma_{0} + N, \min (s-0,
        1/2-\varsigma_{0}-N-0)) + \sum
        _{\substack{(\sigma_{j},m_{j})\in \cE_{0}\\-\varsigma_{0} >
  \Im \sigma_{j} > - \varsigma_{0} - N}} (\sigma - \sigma_{j})^{-m_{j}}a_{j},
\end{align*}
where $\cE_{0}$ is the set of poles of $P_{\sigma}^{-1}$ and
\begin{equation*}
  a_{j} \in \mathcal{B}(\varsigma_{0}+N, \Im \sigma_{j} + 1/2-0).
\end{equation*}

After inverting the Mellin transform, we conclude that $u$ enjoys a
partial asymptotic expansion.  In fact, on $M$, we have
\begin{equation*}
  u = \sum_{\substack{(\sigma_{j}, k)\in \cE_{0} \\ -\varsigma_{0} >
      \Im \sigma_{j} > -\ell}} \rho^{i\sigma_{j}}(\log \rho)^{k}
  b_{jk} + u',,
\end{equation*}
where, for some $C = s + \varsigma_{0}$ (with $s$ as in Proposition~\ref{prop:conormal-space}),
\begin{equation*}
  u' \in \rho^{\ell}\Hb^{\min (C-\ell-0, 1/2-\varsigma_{0}-\ell-0)}(M).
\end{equation*}
The coefficients $b_{jk}$ are smooth functions of $\rho$ taking values
in $I^{(1/2+\Im \sigma_{j}-0)}$.  Looking further into the asymptotic
expansion of $u$, one finds that the coefficients and the remainder
term are growing more singular owing to the radiation field ``hiding''
at $S_{+}$.

In fact, after blowing up $S_{+}$,
Proposition~\ref{prop:2-step-phg} implies that the same
arguments in the preceding discussion provide one step toward the
joint partial polyhomogeneity of $u$.  Indeed, $u$ enjoys an asymptotic
expansion at $C_{+}$ uniformly up to the corner $\mf \cap \scri^{+}$
in $[M; S_{+}]$.

The other needed step involves estimates at
$\scri^{+}$.  This argument relies on the observation that on $M$, the
operators $L$ and $2D_{v}\left( \rho D_{\rho} + v D_{v}\right)$ differ
only by terms with additional vanishing at $N^{*}S_{+}$.  The vector
field $\rho D_{\rho} + v D_{v}$ lifts to the $\bl$-normal vector field
for $\scri^{+}$ in $[M ; S_{+}]$.  Writing
\begin{equation*}
  R = \rho D_{\rho} + v D_{v}, 
\end{equation*}
the other step establishing the polyhomogeneity of $u$ requires that
$u$ enjoy additional vanishing after the application of $(R+ik)\dots
(R+i)R$.

We ignore for now the additional terms in $L$\footnote{Of course,
  these additional terms are always there.  Managing these terms forms
  a sizable part of Section~9.2 of the previous paper~\cite{BVW2}
  and we refer the reader there for a thorough discussion.} and
suppose $L = 2 D_{v}(\rho D_{\rho} + v D_{v}) = 4D_{v}R$.  As these
statements are local to $S_{+}$, a simple argument with cut-off
functions shows we are free to ignore the differentiation near the
conic singularity ($x=0$).

As $Lu$ is smooth and compactly supported, and $u \in \Hb^{s,\gamma}$,
we know that $D_{v}Ru \in \Hb^{s,\gamma}$.  Because $D_{v}$ is
elliptic on $\WFb(u)$, it is microlocally invertible and so $Ru \in
\Hb^{s+1,\gamma}$, i.e., $Ru$ is one order better than $u$.

To continue this iterative process, observe that $RD_{v} =
D_{v}(R+i)$, so that
\begin{equation*}
  \left( \prod _{j=0}^{k-1}(R+ij)\right) L = \left( \prod
    _{j=0}^{k-1}(R+ij)\right) D_{v}R = D_{v}\left( \prod_{j=0}^{k}(R+ij)\right).
\end{equation*}
An inductive argument then shows that
\begin{equation*}
  \left( \prod_{j=0}^{k}(R+ij)\right) u \in \Hb^{s+k+1,\gamma}(M),
\end{equation*}
so that $(R+ik)\dots (R+i)Ru$ enjoys $k+1$ additional orders of
regularity at $S_{+}$.

As $u$ is already conormal to $S_{+}$, measure of regularity there are
essentially based on applications of $D_{v}$.  The vector field
$vD_{v}$ is tangent to $S_{+}$ (and so can be applied to $u$ as many
times as we like), so we may interpret additional regularity at
$S_{+}$ as additional vanishing at $S_{+}$.\footnote{This
  interpretation can be formalized by an integration argument and
  requires keeping track of the factors of the module for which $w$
  already possesses regularity; see~\cite[Section~9.2]{BVW2} for
  details.}  This extra vanishing is precisely what is needed for the
application of Proposition~\ref{prop:2-step-phg} and completes
the bulk of the proof of Theorem~\ref{thm:main-thm}.

We finally characterize the exponents seen in
Theorems~\ref{thm:main-thm} and~\ref{thm:main-thm-v2}.  As noted
above, these exponents are the poles of $\Ps^{-1}$ acting as an
operator $\cX^{s_{\tow}}\to \cY^{s_{\away}}$.

As $\Ps$ is the Mellin conjugate of $L$, we may write
\begin{equation*}
  \Ps = - (1-x^{2})D_{x}^{2} - i(n+1+2i\sigma)x D_{x} +
  \frac{n-1}{x}iD_{x} - \frac{1}{x^{2}}\lap_{\met} + \sigma^{2} - n i
  \sigma + \frac{n^{2}-1}{4}.
\end{equation*}
In particular, in $C_{+}$ we have the following identity:
\begin{align*}
&   \left(1-x^{2}\right)^{\frac{n-1}{4}+i\frac{\sigma}{2}+1}\Ps
  \left(1-x^{2}\right)^{-\frac{n-1}{4}-i\frac{\sigma}{2}} = \\
  & \hspace{1cm} -\left(
    (1-x^{2})D_{x}\right)^{2} + i\frac{n-1}{x}(1-x^{2})D_{x} -
  \frac{1-x^{2}}{x^{2}}\lap_{\met} + \left( \frac{n-1}{2}\right)^{2} + \sigma^{2}.
\end{align*}
Taking $x = \tanh r$ identifies $C_{+}$ with the hyperbolic cone
$C_{\mathrm{hyp}}(Z)$ over $(Z,h)$; the conjugation above yields
\begin{equation*}
    \left(1-x^{2}\right)^{\frac{n-1}{4}+i\frac{\sigma}{2}+1}\Ps
  \left(1-x^{2}\right)^{-\frac{n-1}{4}-i\frac{\sigma}{2}} = - \left(
    \lap_{C_{\mathrm{hyp}}(Z)} - \left( \frac{n-1}{2}\right)^{2}-\sigma^{2}\right).
\end{equation*}

Using this identification, for $\fs$ compactly supported in $C_{+}$, a
straightforward adaptation of the arguments\footnote{In fact, this
  adaptation is not necessary; in this structured setting the operator
  $\Ps^{-1}$ can be found explicitly in terms of hypergeometric
  functions, though we do not include it here.} in previous
work~\cite[Section~7]{BVW1} shows that
\begin{equation*}
  \Ps^{-1}f |_{C_{+}} = -\left(
    1-x^{2}\right)^{-\frac{n-1}{4}-i\frac{\sigma}{2}}R_{C_{\mathrm{hyp}}(Z)}(\sigma) \left((1-x^{2})^{1+\frac{n-1}{4}+i\frac{\sigma}{2}}f\right).
\end{equation*}
Here $R_{C_{\mathrm{hyp}}(Z)}(\sigma) = (\lap_{C_{\mathrm{hyp}}(Z)} -
\frac{(n-1)^{2}}{4} - \sigma^{2})^{-1}$ is the resolvent of the
Laplacian on the hyperbolic cone that is invertible for $\Im \sigma \gg
0$.  The exponents appearing in the expansion of $u$ are therefore the
poles of the resolvent on the hyperbolic cone; these poles were found
explicitly in a previous paper of the authors~\cite{resonances}.


\begin{thebibliography}{BBGR21}

\bibitem[BBGR21]{baskin2021asymptotics}
Dean Baskin, Robert Booth, and Jesse Gell-Redman.
\newblock Asymptotics of the radiation field for the massless
  {D}irac--{C}oulomb system.
\newblock {\em arXiv preprint arXiv:2112.06111}, 2021.

\bibitem[BM19]{resonances}
Dean Baskin and Jeremy~L. Marzuola.
\newblock Locating resonances on hyperbolic cones.
\newblock {\em Math. Res. Lett.}, 26(2):365--381, 2019.

\bibitem[BVW15]{BVW1}
Dean Baskin, Andr{\'a}s Vasy, and Jared Wunsch.
\newblock Asymptotics of radiation fields in asymptotically {M}inkowski space.
\newblock {\em American Journal of Mathematics}, 137(5):1293--1364, 2015.

\bibitem[BVW18]{BVW2}
Dean Baskin, Andr\'{a}s Vasy, and Jared Wunsch.
\newblock Asymptotics of scalar waves on long-range asymptotically {M}inkowski
  spaces.
\newblock {\em Adv. Math.}, 328:160--216, 2018.

\bibitem[CT82a]{CT1}
Jeff Cheeger and Michael Taylor.
\newblock On the diffraction of waves by conical singularities. {I}.
\newblock {\em Comm. Pure Appl. Math.}, 35(3):275--331, 1982.

\bibitem[CT82b]{CT2}
Jeff Cheeger and Michael Taylor.
\newblock On the diffraction of waves by conical singularities. {II}.
\newblock {\em Communications on Pure and Applied Mathematics}, 35(4):487--529,
  1982.

\bibitem[Eco93]{Economakis}
Michael Economakis.
\newblock {\em Boundary regularity of the harmonic map problem between
  asymptotically hyperbolic manifolds}.
\newblock ProQuest LLC, Ann Arbor, MI, 1993.
\newblock Thesis (Ph.D.)--University of Washington.

\bibitem[Fri80]{Friedlander:1980}
F.~G. Friedlander.
\newblock Radiation fields and hyperbolic scattering theory.
\newblock {\em Math. Proc. Cambridge Philos. Soc.}, 88(3):483--515, 1980.

\bibitem[Fri01]{Friedlander:2001}
F.~G. Friedlander.
\newblock Notes on the wave equation on asymptotically {E}uclidean manifolds.
\newblock {\em J. Funct. Anal.}, 184(1):1--18, 2001.

\bibitem[GW18]{GW}
Oran Gannot and Jared Wunsch.
\newblock Semiclassical diffraction by conormal potential singularities.
\newblock Preprint, arXiv:1806.01813. Ann. Sci. ENS, to appear., 2018.

\bibitem[Hin20]{hintz2020resolvents}
Peter Hintz.
\newblock Resolvents and complex powers of semiclassical cone operators.
\newblock {\em arXiv preprint arXiv:2010.01593}, 2020.

\bibitem[Hin21a]{hintz2021semiclassical}
Peter Hintz.
\newblock Semiclassical propagation through cone points.
\newblock {\em arXiv preprint arXiv:2101.01008}, 2021.

\bibitem[Hin21b]{hintz2021sharp}
Peter Hintz.
\newblock A sharp version of {P}rice's law for wave decay on asymptotically
  flat spacetimes.
\newblock {\em Communications in Mathematical Physics}, pages 1--52, 2021.

\bibitem[HV15]{Haber-Vasy}
Nick Haber and Andr\'{a}s Vasy.
\newblock Propagation of singularities around a {L}agrangian submanifold of
  radial points.
\newblock {\em Bull. Soc. Math. France}, 143(4):679--726, 2015.

\bibitem[Maz91]{rafe1991elliptic}
Rafe Mazzeo.
\newblock Elliptic theory of differential edge operators i.
\newblock {\em Communications in Partial Differential Equations},
  16(10):1615--1664, 1991.

\bibitem[Mel93]{Melrose:APS}
Richard~B. Melrose.
\newblock {\em The {A}tiyah-{P}atodi-{S}inger index theorem}, volume~4 of {\em
  Research Notes in Mathematics}.
\newblock A K Peters, Ltd., Wellesley, MA, 1993.

\bibitem[Mel94]{Melrose94}
Richard~B. Melrose.
\newblock Spectral and scattering theory for the {L}aplacian on asymptotically
  {E}uclidian spaces.
\newblock In {\em Spectral and Scattering Theory (Sanda, 1992)}, volume 161 of
  {\em Lecture Notes in Pure and Appl. Math.}, pages 85--130. Dekker, New York,
  1994.

\bibitem[Mel96]{Melrose:MWC}
Richard~B Melrose.
\newblock Differential analysis on manifolds with corners.
\newblock In preparation, 1996.

\bibitem[MVW08]{MVW}
Richard Melrose, Andr{\'a}s Vasy, and Jared Wunsch.
\newblock Propagation of singularities for the wave equation on edge manifolds.
\newblock {\em Duke Mathematical Journal}, 144(1):109--193, 2008.

\bibitem[MW04]{MW}
Richard Melrose and Jared Wunsch.
\newblock Propagation of singularities for the wave equation on conic
  manifolds.
\newblock {\em Invent. Math.}, 156(2):235--299, 2004.

\bibitem[Vas08]{Vasy08}
Andr\'as Vasy.
\newblock Propagation of singularities for the wave equation on manifolds with
  corners.
\newblock {\em Ann. of Math. (2)}, 168(3):749--812, 2008.

\bibitem[Vas13]{vasy:microlocal}
Andr{\'a}s Vasy.
\newblock Microlocal analysis of asymptotically hyperbolic and kerr-de sitter
  spaces (with an appendix by {S}emyon {D}yatlov).
\newblock {\em Inventiones mathematicae}, 194(2):381--513, 2013.

\bibitem[Yan20]{yang2020propagation}
Mengxuan Yang.
\newblock Propagation of polyhomogeneity, diffraction and scattering on product
  cones.
\newblock {\em arXiv preprint arXiv:2004.07030}, 2020.

\end{thebibliography}

\def\cprime{$'$} \def\cftil#1{\ifmmode\setbox7\hbox{$\accent"5E#1$}\else
  \setbox7\hbox{\accent"5E#1}\penalty 10000\relax\fi\raise 1\ht7
  \hbox{\lower1.15ex\hbox to 1\wd7{\hss\accent"7E\hss}}\penalty 10000
  \hskip-1\wd7\penalty 10000\box7}

\end{document}